\def\itm#1{\rm ({#1})} 
\def\itmit#1{\itm{\it #1\;\!}} 
\def\rom{\itmit{\roman{*}}} 
\def\l{\ell}
\def\eps{\varepsilon}
\def\phi{\varphi}
\def\le{\leqslant}
\def\ge{\geqslant}
\def\CC{\mathbb{C}}
\def\FF{\mathbb{F}}
\def\PP{\mathbb{P}}
\def\RR{\mathbb{R}}
\def\ZZ{\mathbb{Z}}
\newcommand{\setbuilder}[2]{\left\{#1:#2\right\}}
\newcommand{\tri}{\mathbin{\triangle}}
\DeclareMathOperator{\rank}{rank}
\DeclareMathOperator{\Sec}{Sec}
\DeclareMathOperator{\Tan}{Tan}
\newtheorem{theorem}{Theorem}[section]
\newtheorem{lemma}[theorem]{Lemma}
\newtheorem{corollary}[theorem]{Corollary}
\newtheorem{prop}[theorem]{Proposition}
\theoremstyle{definition}
\newtheorem*{definition}{Definition}
\begin{document}

\begin{frontmatter}[classification=text]


\author[AL]{Aaron Lin}
\author[KS]{Konrad Swanepoel}

\begin{abstract}
Let $P$ be a set of $n$ points in real projective $d$-space, not all contained in a hyperplane, 
such that any $d$ points span a hyperplane.
An ordinary hyperplane of $P$ is a hyperplane containing exactly $d$ points of $P$.
We show that if $d\ge 4$, the number of ordinary hyperplanes of $P$ is at least $\binom{n-1}{d-1} - O_d(n^{\lfloor(d-1)/2\rfloor})$ if $n$ is sufficiently large depending on $d$.
This bound is tight, and given $d$, we can calculate the exact minimum number for sufficiently large $n$.
This is a consequence of a structure theorem for sets with few ordinary hyperplanes: For any $d \ge 4$ and $K > 0$, if $n \ge C_d K^8$ for some constant $C_d > 0$ depending on $d$,
and $P$ spans at most $K\binom{n-1}{d-1}$ ordinary hyperplanes, then all but at most $O_d(K)$ points of $P$ lie on a hyperplane, an elliptic normal curve, or a rational acnodal curve.
We also find the maximum number of $(d+1)$-point hyperplanes, solving a $d$-dimensional analogue of the orchard problem.
Our proofs rely on Green and Tao's results on ordinary lines, our earlier work on the $3$-dimensional case, as well as results from classical algebraic geometry.
\end{abstract}
\end{frontmatter}

\section{Introduction}\label{sec:intro}

An \emph{ordinary line} of a set of points in the plane is a line passing through exactly two points of the set.
The classical Sylvester--Gallai theorem states that every finite non-collinear point set in the plane spans at least one ordinary line.
In fact, for sufficiently large $n$, an $n$-point non-collinear set in the plane spans at least $n/2$ ordinary lines, and this bound is tight if $n$ is even.
This was shown by Green and Tao \cite{GT13} via a structure theorem characterising all finite point sets with few ordinary lines.

It is then natural to consider higher dimensional analogues.
Motzkin \cite{M51} noted that there are finite non-coplanar point sets in $3$-space that span no plane containing exactly three points of the set.
He proposed considering instead hyperplanes $\Pi$ in $d$-space such that all but one point contained in $\Pi$ is contained in a $(d-2)$-dimensional flat of $\Pi$.
The existence of such hyperplanes was shown by Motzkin \cite{M51} for $3$-space and by Hansen \cite{H65} in higher dimensions.

Purdy and Smith \cite{PS10} considered instead finite non-coplanar point sets in $3$-space with no three points collinear, 
and provided a lower bound on the number of planes containing exactly three points of the set.
Referring to such a plane as an \emph{ordinary plane}, Ball \cite{B18} proved a $3$-dimensional analogue of Green and Tao's \cite{GT13} structure theorem, and found the exact minimum number of ordinary planes spanned by sufficiently large non-coplanar point sets in real projective $3$-space with no three points collinear.
Using an alternative method, we \cite{LS18} were able to prove a more detailed structure theorem but with a stronger condition; see Theorem~\ref{thm:plane} in Section~\ref{sec:proof}.

Ball and Monserrat \cite{BM17} made the following definition, generalising ordinary planes to higher dimensions.
\begin{definition}
An \emph{ordinary hyperplane} of a set of points in real projective $d$-space, where every $d$ points span a hyperplane, is a hyperplane passing through exactly $d$ points of the set.
\end{definition}
They \cite{BM17} also proved bounds on the minimum number of ordinary hyperplanes spanned by such sets (see also \cite{M15}).
Our first main result is a structure theorem for sets with few ordinary hyperplanes.
The elliptic normal curves and rational acnodal curves mentioned in the theorem and their group structure will be described in Section~\ref{sec:curves}.
Our methods extend those in our earlier paper \cite{LS18}, and we detail them in Section~\ref{sec:tools}.

\begin{theorem}\label{thm:main1}
Let $d \ge 4$, $K > 0$, and suppose $n \ge C\max\{(dK)^8, d^3 2^dK\}$ for some sufficiently large absolute constant $C > 0$.
Let $P$ be a set of $n$ points in $\RR\PP^d$ where every $d$ points span a hyperplane.
If $P$ spans at most $K\binom{n-1}{d-1}$ ordinary hyperplanes, then $P$ differs in at most $O(d2^dK)$ points from a configuration of one of the following types:
\begin{enumerate}[label=\rom]
\item A subset of a hyperplane;\label{case:hyperplane}
\item A coset $H \oplus x$ of a subgroup $H$ of an elliptic normal curve or the smooth points of a rational acnodal curve of degree $d+1$, for some $x$ such that $(d+1)x \in H$.\label{case:curve}
\end{enumerate}
\end{theorem}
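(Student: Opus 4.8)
The plan is to prove the structure theorem by induction on the dimension $d$, using projection to reduce to the known lower-dimensional cases. The base case $d=3$ is exactly the structure theorem we quote as Theorem~\ref{thm:plane}, and the planar case $d=2$ is Green and Tao's result on ordinary lines. The key observation is that if $P\subset\RR\PP^d$ has few ordinary hyperplanes, then projecting $P$ from a generic point $p\in P$ onto a hyperplane $\RR\PP^{d-1}$ should produce a set $P'$ of $n-1$ points with controllably few ordinary hyperplanes in one lower dimension: an ordinary hyperplane of $P'$ through $d-1$ points lifts to a hyperplane of $P$ through $p$ together with those $d-1$ points, which is ordinary for $P$ unless extra points of $P$ happen to lie on it. By an averaging argument over the choice of $p$, I would arrange to pick a point $p$ from which the projection has at most $O_d(K)$ times the expected number of ordinary hyperplanes, so the inductive hypothesis applies to $P'$.

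With the inductive hypothesis in hand, $P'$ (minus $O_d(K)$ points) lies on a hyperplane, an elliptic normal curve, or a rational acnodal curve in $\RR\PP^{d-1}$. The next step is to pull this structure back up to $\RR\PP^d$. If $P'$ lies mostly on a hyperplane, then $P$ lies mostly on a cone over that hyperplane with apex $p$, and one argues that having few ordinary hyperplanes forces degeneration into one of the stated cases. If $P'$ lies mostly on a curve of degree $d$ in the projection, I would show the corresponding points of $P$ lie on a curve of degree $d+1$ in $\RR\PP^d$ — the projection of an elliptic normal curve of degree $d+1$ from a point on it is an elliptic normal curve of degree $d$, and similarly for rational acnodal curves, so the curve type is preserved under the inverse operation. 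The algebraic-geometry input here is the classification of curves of low degree and the behaviour of their group structure under projection, which is why the group-theoretic description in case~\ref{case:curve} is essential: the condition $(d+1)x\in H$ and the coset structure must be tracked through the projection and recovered on lifting.

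The main obstacle I anticipate is controlling the error terms through the induction. Each projection can destroy the nice structure of $O_d(K)$ points, and naively the exceptional set could blow up by a constant factor at every one of the $d-2$ inductive steps, giving a bound like $2^{O(d)}K$ rather than a clean $O(d2^dK)$; keeping the constants linear in $K$ and polynomial-times-$2^d$ in $d$ requires care. A related difficulty is the choice of projection point: I must ensure that a single $p$ works, meaning its projection both has few ordinary hyperplanes \emph{and} does not introduce spurious collinearities (we need every $d-1$ points of $P'$ to span a hyperplane), and that the points of $P$ lying on the pulled-back curve are genuinely the preimages of points of $P'$ on the lower curve rather than accidental coincidences. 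Establishing that a good projection point exists will likely consume most of the technical work, via a counting argument showing that only $O_d(K)$ points of $P$ can be bad projection centres.

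The final step, once the bulk of $P$ is known to lie on a curve $\delta$ of degree $d+1$ (or a hyperplane), is to upgrade this to the precise group-coset statement. Here I would invoke the group law on $\delta$: the condition that $d+1$ points of $\delta$ are coplanar translates into a sum being a fixed constant in the group, so a hyperplane through $d$ points of the coset is ordinary precisely when the $(d+1)$st intersection point lies outside $P$. Counting ordinary hyperplanes then reduces to an additive-combinatorics estimate on the subgroup/coset $H\oplus x$ inside the elliptic curve or $\RR/\ZZ$, from which the coset structure and the constraint $(d+1)x\in H$ drop out. The delicate point will be verifying that elliptic normal curves and rational acnodal curves are the \emph{only} possibilities — ruling out other degree-$(d+1)$ curves — which again rests on classical results about irreducible non-degenerate curves of minimal degree in projective space.
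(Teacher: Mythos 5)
Your overall strategy --- induction on the dimension via projection from a point of $P$ chosen by averaging so that few ordinary hyperplanes pass through it, with the $d=3$ structure theorem as base case and an additive-combinatorial argument at the end to extract the coset structure --- is the same as the paper's. But there is a genuine gap at the lifting step. You write that if the projected set $\pi_p(P\setminus\{p\})$ lies mostly on a curve $\gamma_p$ of degree $d$ in $\RR\PP^{d-1}$, you ``would show the corresponding points of $P$ lie on a curve of degree $d+1$.'' A single projection cannot give this: the preimage $\overline{\pi_p^{-1}(\gamma_p)}$ is a two-dimensional cone with apex $p$, and knowing that most of $P$ lies on this surface says nothing about it lying on any particular curve inside it. The fact that a degree-$(d+1)$ curve projects to a degree-$d$ curve is the wrong direction of implication. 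The paper's resolution is to take \emph{two} distinct projection points $p,p'$ from the good set and intersect the two cones $\overline{\pi_p^{-1}(\gamma_p)}$ and $\overline{\pi_{p'}^{-1}(\gamma_{p'})}$; since the cones have distinct vertices they share no component, so by B\'ezout their intersection is a curve of degree at most $d^2$ containing all but $O_d(K)$ points of $P$. One then isolates the irreducible component $\delta_1$ carrying the most points and uses the quantitative trisecant lemmas (Lemmas~\ref{lem:projection1}--\ref{lem:projection3}) to find a further projection point $q$ on $\delta_1$ that is generically one-to-one and separates $\delta_1$ from the other components, forcing $\delta_1$ to have degree $d+1$ and to contain almost all of $P$. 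Without some version of this two-cone intersection your induction does not close.

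Two further points are left unaddressed. First, ruling out \emph{smooth} rational curves of degree $d+1$ is not a consequence of the classification of minimal-degree curves (Proposition~\ref{prop:curves} only says the curve is elliptic or rational); the paper needs Lemma~\ref{lemma:singular_projection}, which says a smooth rational curve projects to a smooth one from all but three of its points, combined with the induction hypothesis, to force the curve to be singular --- and the cuspidal and crunodal cases are then excluded in Lemma~\ref{lem:curve} because their groups $(\RR,+)$ and $(\RR,+)\times\ZZ_2$ admit no large finite subgroups. Second, when $d=4$ the base case Theorem~\ref{thm:plane} has a third alternative (two disjoint conics on a quadric), which your induction must dismiss; the paper does this by showing the putative twisted-cubic component would have to project to a conic, a contradiction. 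Your concern about error terms compounding to $2^{O(d)}K$ is well placed but is in fact what the paper accepts: the final bound is $O(d2^dK)$, obtained by solving the explicit recursion for $g(d,K)$.
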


It is easy to show that conversely, a set of $n$ points where every $d$ span a hyperplane and differing from \ref{case:hyperplane} or \ref{case:curve} by $O(K)$ points, spans
$O(K\binom{n-1}{d-1})$ ordinary hyperplanes.
By \cite{BM17}*{Theorem 2.4}, if a set of $n$ points where every $d$ points span a hyperplane itself spans $K\binom{n-1}{d-1}$ ordinary hyperplanes, and is not contained in a hyperplane, then $K = \Omega(1/d)$.
Theorem~\ref{thm:main2} below implies that $K\ge1$ for sufficiently large $n$ depending on $d$.

For a similar structure theorem in dimension $4$ but with $K = o(n^{1/7})$, see Ball and Jimenez \cite{BJ18}, who show that $P$ lies on the intersection of five quadrics.
Theorem~\ref{thm:main1} proves \cite{BJ18}*{Conjecture~12}, 
noting that elliptic normal curves and rational acnodal curves lie on $\binom{d}{2} - 1$ linearly independent quadrics \citelist{\cite{Klein}*{p.~365}\cite{Fi08}*{Proposition~5.3}}.
We also mention that Monserrat \cite{M15}*{Theorem 2.10} proved a structure theorem stating that almost all points of the set lie on the intersection of $d-1$ hypersurfaces of degree at most~$3$.

Our second main result is a tight bound on the minimum number of ordinary hyperplanes, proving \cite{BM17}*{Conjecture 3}.
Note that our result holds only for sufficiently large $n$; see \citelist{\cite{BM17}\cite{M15}\cite{J18}} for estimates when $d$ is small or $n$ is not much larger than $d$.

\begin{theorem}\label{thm:main2}
Let $d \ge 4$ and let $n\ge Cd^3 2^d$ for some sufficiently large absolute constant $C > 0$.
The minimum number of ordinary hyperplanes spanned by a set of $n$ points in $\RR\PP^d$, not contained in a hyperplane and where every $d$ points span a hyperplane, is
\[ \binom{n-1}{d-1} - O\left(d2^{-d/2}\binom{n}{\lfloor\frac{d-1}{2}\rfloor}\right).\]
This minimum is attained by a coset of a subgroup of an elliptic normal curve or the smooth points of a rational acnodal curve of degree $d+1$, and when $d+1$ and $n$ are coprime, by $n-1$ points in a hyperplane together with a point not in the hyperplane.
\end{theorem}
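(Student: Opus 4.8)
The plan is to derive the exact minimum from the structure theorem (Theorem~\ref{thm:main1}) together with an exact count of ordinary hyperplanes for each extremal configuration and a stability argument. The key observation for a configuration lying on an irreducible curve of degree $d+1$ is that, since such a curve meets any hyperplane in at most $d+1$ points and every $d$ points of $P$ span a hyperplane, each $d$-subset of $P$ lies on a \emph{unique} hyperplane, which either is ordinary or contains exactly $d+1$ points. Double counting incident $d$-subsets (an ordinary hyperplane carries one, a $(d+1)$-point hyperplane carries $d+1$) shows that the number of ordinary hyperplanes equals $\binom{n}{d} - (d+1)t_{d+1}$, where $t_{d+1}$ denotes the number of $(d+1)$-point hyperplanes. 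Thus for type~\ref{case:curve} the entire problem reduces to \emph{maximising} $t_{d+1}$.

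For such a configuration I would use the group law of Section~\ref{sec:curves}: a hyperplane cuts out a degree-$(d+1)$ divisor whose points sum to a fixed constant, so $(d+1)$-point hyperplanes correspond to $(d+1)$-subsets of the coset summing to that constant. Expanding this count as a character sum over the subgroup $H$ of order $n$, the principal character contributes the main term $\tfrac1n\binom{n}{d+1}$, which via the identity above gives an ordinary count of $\binom{n-1}{d-1}$, while a character of order $m$ contributes only when $m \mid d+1$, with magnitude about $\binom{n/m}{(d+1)/m}$. Such a character exists in $H$ exactly when $m \mid \gcd(d+1,n)$, so the correction is nonzero precisely when $\gcd(d+1,n) > 1$, its dominant part coming from $m=2$ and having size $O\!\left(d2^{-d/2}\binom{n}{\lfloor(d-1)/2\rfloor}\right)$. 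Choosing the coset (equivalently the point $x$ with $(d+1)x \in H$) so as to align the phases $\bar\chi(\cdot)$ maximises $t_{d+1}$ and pins down the exact minimum.

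For type~\ref{case:hyperplane}, say $n-1$ points in a hyperplane $\Pi$ and one point $z \notin \Pi$, the general position hypothesis forces the induced set in $\Pi \cong \RR\PP^{d-1}$ to have no $d$ points in a common $(d-2)$-flat. Hence every $(d-1)$-subset of these points spans a $(d-2)$-flat containing exactly $d-1$ of them, and joining such a flat to $z$ gives an ordinary hyperplane of $P$; conversely, since the only hyperplane through $\ge d$ points of $P$ missing $z$ is $\Pi$ itself, every ordinary hyperplane arises this way. This yields \emph{exactly} $\binom{n-1}{d-1}$ ordinary hyperplanes. Comparing with the curve count then gives the stated dichotomy: when $\gcd(d+1,n)=1$ the correction vanishes and both configurations realise $\binom{n-1}{d-1}$, whereas when $\gcd(d+1,n)>1$ the curve configuration is strictly better.

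It remains to establish the matching lower bound for an arbitrary admissible $P$, and this is where I expect the main difficulty. Applying Theorem~\ref{thm:main1} with $K = O(1)$ to a putative near-minimiser shows that all but $t = O(d2^d)$ points of $P$ lie on a hyperplane or on a curve as in~\ref{case:curve}. The hard part is the \emph{stability} step: proving that deleting, inserting, or perturbing these $t$ exceptional points cannot push the count below $\binom{n-1}{d-1} - O\!\left(d2^{-d/2}\binom{n}{\lfloor(d-1)/2\rfloor}\right)$. A point lying off the extremal variety typically spans of order $\binom{n}{d-1}$ new ordinary hyperplanes with $(d-1)$-subsets of the main part, so one must carefully weigh these gains against the hyperplanes destroyed, using the exact curve count above to control the main part and absorbing the $t$-dependent discrepancies into the error term. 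Completing this accounting, and checking that curves and cosets carrying a subgroup of order $n$ with the required property $(d+1)x\in H$ exist, finishes the determination of the exact minimum.
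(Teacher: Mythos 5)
Your reduction for the curve case is sound: since a degree-$(d+1)$ curve meets every hyperplane in at most $d+1$ points, the identity $m_d = \binom{n}{d} - (d+1)m_{d+1}$ holds, and this is exactly the identity the paper exploits (in the proof of Theorem~\ref{thm:main3}, via $[1,\dotsc,1;c] = d!\binom{n}{d} - d[2,1,\dotsc,1;c]$). Your character-sum evaluation of $m_{d+1}$ is a genuinely different route from the paper's, which instead computes $[2,1,\dotsc,1;c]$ directly by the recurrence of Lemma~\ref{lem:recurrence} and the crude bound of Lemma~\ref{lem:upper}; the generating function $\prod_{h\in H}(1+z\chi(h)) = (1-(-z)^m)^{n/m}$ for $\chi$ of order $m$ handles the distinctness constraint cleanly and, after dividing by $|H|$, does reproduce the error term $O(2^{-d/2}\binom{n}{\lfloor(d-1)/2\rfloor})$, so this part of your plan is viable and arguably tidier than Lemmas~\ref{lem:upper}--\ref{lem:2111}. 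Your count of exactly $\binom{n-1}{d-1}$ for $n-1$ points in a hyperplane plus one point is also correct.

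The genuine gap is the stability step, which you explicitly defer and which constitutes most of the paper's Section~\ref{sec:extremal}. Theorem~\ref{thm:main1} only places all but $O(d2^d)$ points on a hyperplane or a coset, so you must rule out two things. First, in the hyperplane case, that having $K\ge 2$ points off the hyperplane forces \emph{strictly more} than $\binom{n-1}{d-1}$ ordinary hyperplanes; the paper's Lemma~\ref{lem:hyperplane} does this by double-counting $d$-tuples with $d-1$ and $d-2$ points in the hyperplane, and the inequality $K\binom{n-K}{d-1}\frac{n-2K-d+3}{n-K-d+2} > \binom{n-1}{d-1}$ for $K\ge 2$ is not automatic. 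Second, and harder, in the curve case, that any nonempty symmetric difference from an exact coset pushes the count up to at least $(1+c)\binom{n-1}{d-1}$; this is Lemma~\ref{lem:coset2}, whose proof needs Lemmas~\ref{lem:7.7-} and~\ref{lem:7.7} guaranteeing that a point off the coset (on or off the curve) lies on $\Omega(\binom{n}{d-1})$ hyperplanes meeting the coset in exactly $d-1$ points. Lemma~\ref{lem:7.7} in particular is a nontrivial induction on dimension generalising \cite{GT13}*{Lemma~7.7}, requiring control of the $O(d^2)$ exceptional projection points via the Pl\"ucker formulas and the trisecant lemma, and it is false as naively stated in dimension $3$ without excluding cone vertices. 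Without these lemmas your accounting of ``hyperplanes gained versus destroyed'' cannot be completed, so as it stands the lower bound for arbitrary near-minimisers is not established.
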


Green and Tao \cite{GT13} also used their structure theorem to solve the classical orchard problem of finding the maximum number of 
$3$-point lines 
spanned by a set of $n$ points in the plane, for $n$ sufficiently large.
We solved the $3$-dimensional analogue in \cite{LS18}.
Our third main result is the $d$-dimensional analogue.
We define a \emph{$(d+1)$-point hyperplane} to be a hyperplane through exactly $d+1$ points of a given set.

\begin{theorem}\label{thm:main3}
Let $d \ge 4$ and let $n\ge Cd^3 2^d$ for some sufficiently large absolute constant $C > 0$.
The maximum number of $(d+1)$-point hyperplanes spanned by a set of $n$ points in $\RR\PP^d$ where every $d$ points span a hyperplane is
\[ \frac{1}{d+1} \binom{n-1}{d} + O\left(2^{-d/2}\binom{n}{\lfloor\frac{d-1}{2}\rfloor}\right). \]
This maximum is attained by a coset of a subgroup of an elliptic normal curve or the smooth points of a rational acnodal curve of degree $d+1$.
\end{theorem}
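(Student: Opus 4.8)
The plan is to reduce Theorem~\ref{thm:main3} to Theorem~\ref{thm:main2} through an elementary double-counting identity, so that no independent structural analysis is needed. Write $t_k = t_k(P)$ for the number of hyperplanes meeting $P$ in exactly $k$ points, so that the number of ordinary hyperplanes is $t_d$ and the number of $(d+1)$-point hyperplanes is $t_{d+1}$. Since every $d$-element subset of $P$ spans a unique hyperplane, and a hyperplane meeting $P$ in exactly $k$ points is spanned by exactly $\binom{k}{d}$ of these subsets, I would first record
\[ \sum_{k\ge d}\binom{k}{d}t_k = \binom{n}{d}, \]
which rearranges to $t_d + (d+1)t_{d+1} = \binom{n}{d} - \sum_{k\ge d+2}\binom{k}{d}t_k$.

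For the upper bound, the omitted sum is non-negative, so every admissible $P$ satisfies $(d+1)t_{d+1}\le \binom{n}{d} - t_d$. Theorem~\ref{thm:main2} supplies $t_d \ge \binom{n-1}{d-1} - O\!\left(d2^{-d/2}\binom{n}{\lfloor(d-1)/2\rfloor}\right)$, and substituting this together with the Pascal identity $\binom{n}{d} - \binom{n-1}{d-1} = \binom{n-1}{d}$ gives
\[ t_{d+1} \le \frac{1}{d+1}\binom{n-1}{d} + O\!\left(2^{-d/2}\binom{n}{\lfloor(d-1)/2\rfloor}\right), \]
which is the asserted upper bound for \emph{every} configuration.

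For attainment, I would take $P$ to be exactly the extremal configuration of Theorem~\ref{thm:main2} lying on an elliptic normal curve or rational acnodal curve of degree $d+1$. The key geometric input is that a non-degenerate irreducible curve of degree $d+1$ meets every hyperplane in at most $d+1$ points, so for such a $P$ no hyperplane contains more than $d+1$ points, i.e.\ $t_k = 0$ for all $k \ge d+2$. The identity then collapses to the exact relation $t_d + (d+1)t_{d+1} = \binom{n}{d}$; since this $P$ realises the minimum $t_d = \binom{n-1}{d-1} - O\!\left(d2^{-d/2}\binom{n}{\lfloor(d-1)/2\rfloor}\right)$ of Theorem~\ref{thm:main2}, we obtain $t_{d+1} = \frac{1}{d+1}\binom{n-1}{d} + O\!\left(2^{-d/2}\binom{n}{\lfloor(d-1)/2\rfloor}\right)$, matching the upper bound. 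The vanishing of the higher $t_k$ via B\'ezout is exactly what removes the need for any perturbation argument and makes the reduction clean.

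The substance of the theorem is therefore entirely inherited from Theorem~\ref{thm:main2} (and behind it the structure theorem and the group-theoretic counting on the curves); the genuine obstacle for \emph{this} statement is only to verify that the two error terms truly coincide, which amounts to checking that a single curve configuration simultaneously minimises the ordinary-hyperplane count and admits no hyperplane with more than $d+1$ points. The one point I would treat carefully is the rational acnodal case: here one must confirm that the acnode, being excluded from $P$, does not spoil the degree-$(d+1)$ intersection bound, so that $\lvert P\cap\Sigma\rvert\le d+1$ still holds for every hyperplane $\Sigma$ and hence $t_k=0$ for $k\ge d+2$ as required.
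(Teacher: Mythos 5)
Your proposal is correct, and for the upper bound it takes a genuinely shorter route than the paper. The paper's proof applies the structure theorem (Theorem~\ref{thm:main1}) a second time to a maximising configuration, dismisses the near-hyperplane case by hand, and then invokes Theorem~\ref{thm:main2} together with the stability estimate of Lemma~\ref{lem:coset2} to argue that any deviation from a coset strictly decreases $m_{d+1}$. You instead observe that for \emph{every} admissible $P$ the identity $\sum_{k\ge d}\binom{k}{d}t_k=\binom{n}{d}$ and the non-negativity of the terms with $k\ge d+2$ give $(d+1)t_{d+1}\le\binom{n}{d}-t_d$, and then import the lower bound on $t_d$ directly from Theorem~\ref{thm:main2}; no second pass through the structure theorem is needed. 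Your attainment step coincides with the paper's: the paper's identity $[1,\dotsc,1;c]=d!\binom{n}{d}-d[2,1,\dotsc,1;c]$ is exactly your collapsed identity $t_d+(d+1)t_{d+1}=\binom{n}{d}$ for a coset, the collapse being justified, as you say, by the fact that a hyperplane meets an irreducible non-degenerate curve of degree $d+1$ in at most $d+1$ points (and excluding the acnode only helps). What your route buys is brevity and the transparent exact equivalence $\max t_{d+1}=\frac{1}{d+1}\bigl(\binom{n}{d}-\min t_d\bigr)$, which the paper only records as a remark after the proof; what the paper's longer route buys is the extra structural information that any near-maximiser must essentially be a coset, which is not claimed in the theorem. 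Two small points you should add: Theorem~\ref{thm:main2} applies only to sets not contained in a hyperplane, so the degenerate case $P\subseteq\Pi$ must be disposed of separately (trivially: any hyperplane other than $\Pi$ meets $\Pi$ in a $(d-2)$-flat containing at most $d-1$ points of $P$, so $t_{d+1}\le 1$); and you should note that every $d$ points of a coset do span a hyperplane (this follows from the group property, as the paper records before Lemma~\ref{lem:coset1}), so the extremal coset is an admissible configuration for the counting identity.
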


While the bounds in Theorems~\ref{thm:main2} and~\ref{thm:main3} are asymptotic, we provide a recursive method (as part of our proofs) to calculate the exact extremal values for a given $d$ and $n$ sufficiently large in Section~\ref{sec:extremal}.
In principle, the exact values can be calculated for any given $d$ and turns out to be a quasi-polynomial in $n$ with a period of $d+1$.
We present the values for $d = 4, 5, 6$ at the end of Section~\ref{sec:extremal}.

\subsection*{Relation to previous work}
The main idea in our proof of Theorem~\ref{thm:main1} 
is to induct on the dimension $d$, with the base case $d=3$ being our earlier structure theorem for sets defining few ordinary planes \cite{LS18},
which in turn is based on Green and Tao's Intermediate Structure Theorem for sets defining few ordinary lines \cite{GT13}*{Proposition~5.3}.

Roughly, the structure theorem in $3$-space states that if a finite set of points is in general position (no three points collinear) and spans few ordinary planes, then most of the points must lie on a plane, two disjoint conics, or an elliptic or acnodal space quartic curve.
In fact, we can define a group structure on these curves encoding when four points are coplanar, in which case our point set must be very close to a coset of the curve. 
(See Theorem~\ref{thm:plane} 
for a more precise statement.)

As originally observed by Ball \cite{B18} in $3$-space, the general position condition allows the use of projection to leverage Green and Tao's Intermediate Structure Theorem \cite{GT13}*{Proposition~5.3}.
This avoids having to apply their Full Structure Theorem \cite{GT13}*{Theorem~1.5}, which has a much worse lower bound on $n$, as it avoids the technical Section~6 of \cite{GT13}, dealing with the case in the plane when most of the points lie on a large, though bounded, number of lines.
On the other hand, to get to the precise coset structure, we used additive-combinatorial results from \cite{GT13}*{Section~7}, specifically \cite{GT13}*{Propositions~A.5, Lemmas 7.2, 7.4, 7.7, and Corollary 7.6}.
In this paper, the only result of Green and Tao \cite{GT13} we explicitly use is \cite{GT13}*{Proposition~A.5}, which we extend in Proposition~\ref{cor:a5}, while all other results are subsumed in the structure theorem in $3$-space.
In dimensions $d>3$, the general position condition also allows the use of projections from a point to a hyperplane (see also Ball and Monserrat \cite{BM17}).
In Section~\ref{ssec:projections} we detail various technical results about the behaviour of curves under such projections, which are extensions of $3$-dimensional results in \cite{LS18}.

While the group structure on elliptic or singular space quartic curves are well studied (see for instance \cite{Muntingh}), we could not find references to the group structure on singular rational curves in higher dimensions.
This is our main focus in Section~\ref{sec:curves}, which in a way extends \cite{LS18}*{Section 3}.
In particular, we look at Sylvester's theorem on when a binary form can be written as a sum of perfect powers, which has its roots in classical invariant theory.
In extending the results of \cite{LS18}*{Section 3}, we have to consider how to generalise the catalecticant (of a binary quartic form), which leads us to the secant variety of the rational normal curve as a determinantal variety.

Green and Tao's Intermediate Structure Theorem in $2$-space has a slightly different flavour to their Full Structure Theorem, the structure theorem in $3$-space, and Theorem~\ref{thm:main1}.
However, this is not the only reason why we start our induction at $d=3$.
A more substantial reason is that there are no smooth rational cubic curves in $2$-space; as is well known, all rational planar cubic curves are singular.
Thus, both smooth and singular rational quartics in $3$-space project onto rational cubics, and we need some way to tell them apart.
In higher dimensions, we have Lemma~\ref{lemma:singular_projection}
to help us, but since this is false when $d=3$, the induction from the plane to $3$-space \cite{LS18} is more technical.
This is despite the superficial similarity between the $2$- and $3$-dimensional situations where there are two almost-extremal cases while there is essentially only one case when $d>3$.

Proving Theorem~\ref{thm:main1}, which covers the $d > 3$ cases, is thus in some sense less complicated, since not only are we leveraging a more detailed structure theorem (Theorems~\ref{thm:main1} 
and~\ref{thm:plane}
as opposed to \cite{GT13}*{Proposition~5.3}), we also lose a case.
However, there are complications that arise in how to generalise and extend results from $2$- and $3$-space to higher dimensions.

\section{Notation and tools}\label{sec:tools}

By $A = O(B)$, we mean there exists an absolute constant $C > 0$ such that $0\le A \le CB$.
Thus, $A=-O(B)$ means there exists an absolute constant $C>0$ such that $-CB\le A\le 0$.
We also write $A=\Omega(B)$ for $B=O(A)$.
None of the $O(\cdot)$ and $\Omega(\cdot)$ statements in this paper have implicit dependence on the dimension~$d$.

We write $A\tri B$ for the symmetric difference of the sets $A$ and $B$. 

Let $\FF$ denote the field of real or complex numbers, let $\FF^* = \FF\setminus{\{0\}}$, and let $\FF\PP^d$ denote the $d$-dimensional projective space over $\FF$.
We denote the homogeneous coordinates of a point in $d$-dimensional projective space by a $(d+1)$-dimensional vector $[x_0, x_1, \dots, x_d]$.
We call a linear subspace of dimension $k$ in $\FF\PP^d$ a \emph{$k$-flat}; thus a point is a $0$-flat, a line is a $1$-flat, a plane is a $2$-flat, and a hyperplane is a $(d-1)$-flat.
We denote by $Z_{\FF}(f)$ the set of $\FF$-points of the algebraic hypersurface defined by the vanishing of a homogeneous polynomial $f\in\FF[x_0, x_1, \dots, x_d]$.
More generally, we consider a (closed, projective) \emph{variety} to be any intersection of algebraic hypersurfaces.
We say that a variety is pure-dimensional if each of its irreducible components has the same dimension.
We consider a \emph{curve} of degree $e$ in $\CC\PP^d$ to be a variety $\delta$ of pure dimension $1$ such that a generic hyperplane in $\CC\PP^d$ intersects $\delta$ in $e$ distinct points.
More generally, the degree of a variety $X \subset \CC\PP^d$ of dimension $r$ is
\[ \deg(X) := \max \setbuilder{|\Pi \cap X|}{\text{$\Pi$ is a $(d-r)$-flat such that $\Pi \cap X$ is finite}}. \]

We say that a curve is \emph{non-degenerate} if it is not contained in a hyperplane, and \emph{non-planar} if it is not contained in a $2$-flat.
We call a curve \emph{real} if each of its irreducible components contains infinitely many points of $\RR\PP^d$.
Whenever we consider a curve in $\RR\PP^d$, we implicitly assume that its Zariski closure is a real curve.

We denote the Zariski closure of a set $S \subseteq \CC\PP^d$ by $\overline{S}$.
We will use the \emph{secant variety $\Sec_{\CC}(\delta)$} of a curve $\delta$, which is the Zariski closure of the set of points in $\CC\PP^d$ that lie on a line through some two points of $\delta$. 

\subsection{B\'ezout's theorem}\label{ssec:bezout}

B\'ezout's theorem gives the degree of an intersection of varieties.
While it is often formulated as an equality, in this paper we only need the weaker form that ignores multiplicity and gives an upper bound.
The (set-theoretical) intersection $X\cap Y$ of two varieties is just the variety defined by $P_X\cup P_Y$, where $X$ and $Y$ are defined by the collections of homogeneous polynomials $P_X$ and $P_Y$ respectively.

\begin{theorem}[B\'ezout \cite{Fu84}*{Section~2.3}]\label{thm:bezout}
Let $X$ and $Y$ be varieties in $\CC\PP^d$ with no common irreducible component.
Then $\deg(X \cap Y) \le \deg(X) \deg(Y)$.
\end{theorem}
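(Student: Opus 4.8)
The plan is to reduce the statement to two irreducible varieties, then to the case where one of them is a hypersurface, and finally to a curve meeting a hypersurface, which I would settle by counting zeros on a normalization. For the first reduction I use two properties of the degree as defined above. Writing $X=\bigcup_i X_i$ and $Y=\bigcup_j Y_j$ as unions of irreducible components, any flat $\Pi$ meeting $X$ finitely meets each $X_i$ finitely, and $|X\cap\Pi|\le\sum_i|X_i\cap\Pi|$; together with the (standard) converse that a single flat can realize the full degree of every component at once, this gives additivity $\deg(X)=\sum_i\deg(X_i)$, and likewise for $Y$. Since $X\cap Y=\bigcup_{i,j}(X_i\cap Y_j)$, subadditivity yields $\deg(X\cap Y)\le\sum_{i,j}\deg(X_i\cap Y_j)$. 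Because $X$ and $Y$ have no common component we have $X_i\ne Y_j$ for all $i,j$, so each pair $X_i,Y_j$ again has no common component; hence proving $\deg(X_i\cap Y_j)\le\deg(X_i)\deg(Y_j)$ in the irreducible case and summing gives $\deg(X\cap Y)\le\bigl(\sum_i\deg X_i\bigr)\bigl(\sum_j\deg Y_j\bigr)=\deg(X)\deg(Y)$. So I may assume $X,Y$ irreducible and distinct.

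Next I would treat the case where $Y=Z_\CC(f)$ is a hypersurface of degree $e=\deg(Y)$. If $X\subseteq Y$ then $X\cap Y=X$ and the bound is immediate, so assume $f$ does not vanish on the irreducible variety $X$, whence $X\cap Y$ is pure of dimension $r-1$ by Krull's principal ideal theorem, where $r=\dim X$. Slicing with a generic $(d-r+1)$-flat $\Lambda$, the intersection $C:=X\cap\Lambda$ is a curve of degree $\deg(X)$ in $\Lambda\cong\CC\PP^{d-r+1}$, while $Y\cap\Lambda$ is a hypersurface of degree $e$ there, and $(X\cap Y)\cap\Lambda=C\cap(Y\cap\Lambda)$ consists of $\deg(X\cap Y)$ points. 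It then remains to bound $C\cap Z_\CC(g)$ for a curve $C$ of degree $m$ and a form $g$ of degree $e$ not vanishing on any component of $C$. Pulling $g$ back along the normalization $\nu\colon\widetilde C\to C$, it becomes a nonzero section of a line bundle of total degree $em$, so its zeros number at most $em$; as every point of $C\cap Z_\CC(g)$ lifts to such a zero, we get $|C\cap Z_\CC(g)|\le em$, and hence $\deg(X\cap Y)\le e\deg(X)=\deg(X)\deg(Y)$.

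Finally, for general irreducible $Y$ I would reduce to the diagonal: $X\cap Y$ is isomorphic to $(X\times Y)\cap\Delta$ in $\CC\PP^d\times\CC\PP^d$, where $\Delta$ is cut out set-theoretically by the bidegree-$(1,1)$ forms $x_iy_j-x_jy_i$. Cutting $X\times Y$ by $d$ suitably generic linear combinations of these forms isolates $\Delta$ along $X\times Y$, and iterating the hypersurface case of the previous paragraph drops the dimension one step at a time. The main obstacle is the degree bookkeeping in this last reduction: applying the $(1,1)$-slices through the Segre embedding inflates the bound by binomial factors coming from $\deg(X\times Y)=\binom{\dim X+\dim Y}{\dim X}\deg(X)\deg(Y)$, and recovering the clean multiplicative constant $\deg(X)\deg(Y)$ requires working throughout with the geometric (sum-of-components) degree and choosing the cutting forms so that no spurious components persist — equivalently, tracking the intersection through generic projections rather than the Segre map. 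This is precisely the content encapsulated in Fulton's intersection theory, from which the stated inequality follows.
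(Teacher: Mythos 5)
The paper does not prove this statement at all; it is quoted verbatim from Fulton \cite{Fu84}*{Section~2.3} as a known black box, so there is no internal proof to compare against. Judged on its own terms, your attempt has a genuine gap in the final step. The first two reductions are essentially sound (though the additivity $\deg(X)=\sum_i\deg(X_i)$ needs a word of care under the paper's definition when $X$ is not pure-dimensional, since a generic flat of dimension $d-\dim X$ misses the lower-dimensional components), and your hypersurface case via a generic linear slice and a zero count on the normalization is a correct proof of that special case. But the last paragraph is not a proof: you correctly identify that passing through the Segre embedding of $X\times Y$ inflates the bound by the factor $\binom{\dim X+\dim Y}{\dim X}$, and then you resolve this by asserting that recovering the clean constant ``is precisely the content encapsulated in Fulton's intersection theory, from which the stated inequality follows.'' That is circular --- the multiplicity-free inequality you are trying to prove \emph{is} Fulton's refined B\'ezout theorem, so you cannot invoke it to close your own argument.

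The missing idea is to replace the Segre product by the \emph{ruled join}: embed $X$ and $Y$ in two complementary $d$-flats of $\CC\PP^{2d+1}$ and let $J(X,Y)$ be the union of all lines joining a point of $X$ to a point of $Y$. This is an irreducible variety of dimension $\dim X+\dim Y+1$ whose degree is exactly $\deg(X)\deg(Y)$ --- no binomial factor, because a generic complementary flat meets the join in one point for each pair of points cut out on $X$ and on $Y$. The intersection $X\cap Y$ is then recovered, up to isomorphism, as $J(X,Y)\cap L$ for an explicit $d$-dimensional linear subspace $L$ (the ``diagonal'' subspace), i.e.\ as the result of cutting $J(X,Y)$ by $d+1$ hyperplanes. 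Your hypersurface case, applied once per hyperplane (setting aside at each stage any components that happen to lie inside the next hyperplane, which simply persist unchanged), shows the total set-theoretic degree never increases under these cuts, giving $\deg(X\cap Y)\le\deg J(X,Y)=\deg(X)\deg(Y)$. Without this substitution of the join for the product, your argument only yields the weaker bound with the binomial coefficient.
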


\subsection{Projections}\label{ssec:projections}

Given $p\in\FF\PP^d$, the \emph{projection from $p$}, $\pi_p\colon \FF\PP^d\setminus\{p\}\to \FF\PP^{d-1}$, is defined by identifying $\FF\PP^{d-1}$ with any hyperplane $\Pi$ of $\FF\PP^d$ not passing through $p$, and then letting $\pi_p(x)$ be the point where the line $px$ intersects $\Pi$ \cite{H92}*{Example~3.4}.
Equivalently, $\pi_p$ is induced by a surjective linear transformation $\FF^{d+1}\to\FF^d$ where the kernel is spanned by the vector~$p$.

As in our previous paper \cite{LS18}, we have to consider projections of curves where we do not have complete freedom in choosing a generic projection point $p$.

Let $\delta \subset \CC\PP^d$ be an irreducible non-planar curve of degree $e$, and let $p$ be a point in $\CC\PP^d$.
We call $\pi_p$ \emph{generically one-to-one on $\delta$} if there is a finite subset $S$ of $\delta$ such that $\pi_p$ restricted to $\delta\setminus S$ is one-to-one.
(This is equivalent to the birationality of $\pi_p$ restricted to $\delta\setminus\{p\}$ \cite{H92}*{p.~77}.)
If $\pi_p$ is generically one-to-one, the degree of the curve $\overline{\pi_p(\delta \setminus \{p\})}$ is $e-1$ if $p$ is a smooth point on $\delta$, and is $e$ if $p$ does not lie on~$\delta$;
if $\pi_p$ is not generically one-to-one, then the degree of $\overline{\pi_p(\delta \setminus \{p\})}$ is at most $(e-1)/2$ if $p$ lies on $\delta$, and is at most $e/2$ if $p$ does not lie on~$\delta$
\cite{H92}*{Example 18.16}, \cite{Kollar}*{Section 1.15}.

The following three lemmas on projections are proved in \cite{LS18} in the case $d=3$.
They all state that most projections behave well and can be considered to be quantitative versions of the trisecant lemma \cite{KKT08}.
The proofs of Lemmas~\ref{lem:projection2} and \ref{lem:projection3} are almost word-for-word the same as the proofs of the $3$-dimensional cases in \cite{LS18}.
All three lemmas can also be proved by induction on the dimension $d \ge 3$ from the $3$-dimensional case.
We illustrate this by proving Lemma~\ref{lem:projection1}.

\begin{lemma}\label{lem:projection1}
Let $\delta$ be an irreducible non-planar curve of degree $e$ in $\CC\PP^d$, $d\ge 3$.
Then there are at most $O(e^4)$ points $p$ on $\delta$ such that $\pi_p$ restricted to $\delta\setminus\{p\}$ is not generically one-to-one.
\end{lemma}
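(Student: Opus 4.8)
The plan is to induct on the dimension $d$, taking as base case the $d=3$ statement, which is proved in \cite{LS18}. Throughout, call a point $p\in\delta$ \emph{bad} if $\pi_p$ restricted to $\delta\setminus\{p\}$ is not generically one-to-one, and write $B(\delta)$ for the (finite) set of bad points; the goal is $|B(\delta)|=O(e^4)$ with the implied constant independent of $d$. First I would reduce to the non-degenerate case. A non-planar $\delta$ spans some $k$-flat $\Lambda$ with $3\le k\le d$, and two distinct points of $\delta$ are collinear with $p$ precisely when this happens inside $\Lambda$, so the fibres of $\pi_p$ on $\delta$, and hence the property of being bad, are intrinsic to $\Lambda$. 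We may therefore replace $\CC\PP^d$ by $\Lambda\cong\CC\PP^k$ and assume $\delta$ is non-degenerate; if $k<d$ we are already done by the inductive hypothesis, so assume $\delta$ spans $\CC\PP^d$ with $d\ge4$.

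For the inductive step I would project from a generic point. The secant variety $\Sec_{\CC}(\delta)$ of a curve has dimension at most $3$, so for $d\ge4$ a generic point $q\in\CC\PP^d$ lies neither on $\delta$ (dimension $1$) nor on $\Sec_{\CC}(\delta)$ (dimension $\le3<d$). In particular $q$ lies on no chord of $\delta$, so $\pi_q$ is \emph{injective} on $\delta$; since $q\notin\delta$, the image $\delta'=\overline{\pi_q(\delta\setminus\{q\})}$ is an irreducible curve of degree $e$, and it is non-degenerate in $\CC\PP^{d-1}$ because $\delta$ spans $\CC\PP^d$. As $d-1\ge3$, the curve $\delta'$ is non-planar, so the inductive hypothesis applies and gives $|B(\delta')|=O(e^4)$.

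The heart of the argument is to show that $\pi_q$ carries bad points to bad points, that is, $\pi_q(B(\delta))\subseteq B(\delta')$. Suppose $p\in B(\delta)$. Then for a generic $x\in\delta$ the line $px$ meets $\delta$ in a further point $x'\ne x$, giving three distinct collinear points $p,x,x'$ of $\delta$. Since $q\notin\Sec_{\CC}(\delta)$, the point $q$ does not lie on the secant line through $p,x,x'$, so $\pi_q$ sends this line to a genuine line of $\CC\PP^{d-1}$ through $\pi_q(p),\pi_q(x),\pi_q(x')$, all lying on $\delta'$; moreover $\pi_q(x)\ne\pi_q(x')$ because $\pi_q$ is injective. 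Letting $x$ range over a one-parameter family of generic points of $\delta$ yields a one-parameter family of chords of $\delta'$ through $\pi_q(p)$, which shows that $\pi_{\pi_q(p)}$ is not generically one-to-one on $\delta'$; hence $\pi_q(p)\in B(\delta')$. As $\pi_q$ is injective on $\delta$, it follows that $|B(\delta)|\le|B(\delta')|=O(e^4)$, completing the induction.

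I expect the main obstacle to be this clean transfer of the bad-point count, and it is exactly where the hypothesis $d\ge4$ enters: only when $\dim\Sec_{\CC}(\delta)\le3<d$ can one choose a single $q$ that simultaneously avoids $\delta$ and every chord of $\delta$, which upgrades $\pi_q$ from merely generically one-to-one to genuinely injective and makes the inequality $|B(\delta)|\le|B(\delta')|$ hold with no additive correction. The remaining care is bookkeeping: verifying that ``bad'' is intrinsic to the linear span (legitimizing the reduction to the non-degenerate case) and that projecting a spanning curve from a point yields a spanning curve in the target, so that non-planarity is preserved and the inductive hypothesis genuinely applies.
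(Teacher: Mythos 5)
Your proposal is correct and follows essentially the same route as the paper: induction on $d$ with the $d=3$ case of \cite{LS18} as base, projecting from a point off $\Sec_{\CC}(\delta)$ (available since $\dim\Sec_{\CC}(\delta)\le 3<d$), and checking that trisecants through a bad point project to trisecants through its image, so bad points map injectively to bad points of the projected curve. Your explicit reduction to the non-degenerate case is a minor piece of bookkeeping the paper handles implicitly, not a substantive difference.
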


\begin{proof}
The case $d=3$ was shown in \cite{LS18}, based on the work of Furukawa \cite{Fu11}.
We next assume that $d \ge 4$ and that the lemma holds in dimension $d-1$.
Since $d>3$ and the dimension of $\Sec_\CC(\delta)$ is at most $3$ \cite{H92}*{Proposition~11.24}, there exists a point $p \in \CC\PP^d$ such that all lines through $p$ have intersection multiplicity at most $1$ with $\delta$.
It follows that the projection $\delta':=\overline{\pi_p(\delta)}$ of $\delta$ is a non-planar curve of degree $e$ in $\CC\PP^{d-1}$.
Consider any line $\ell$ not through $p$ that intersects $\delta$ in at least three distinct points $p_1,p_2,p_3$.
Then $\pi_p(\ell)$ is a line in $\CC\PP^{d-1}$ that intersects $\delta'$ in three points $\pi_p(p_1), \pi_p(p_2), \pi_p(p_3)$.
It follows that if $x\in\delta$ is a point such that for all but finitely many points $y\in\delta$, the line $xy$ intersects $\delta$ in a point other than $x$ or $y$, then $x':=\pi_p(x)$ is a point such that for all but finitely many points $y':=\pi_p(y)\in\delta'$, the line $x'y'$ intersects $\delta'$ in a third point.
That is, if $\pi_x$ restricted to $\delta$ is not generically one-to-one, then the projection map $\pi_{x'}$ in $\CC\PP^{d-1}$ restricted to $\delta'$ is not generically one-to-one.
By the induction hypothesis, there are at most $O(e^4)$ such points and we are done.
\end{proof}

\begin{lemma}\label{lem:projection2}
Let $\delta$ be an irreducible non-planar curve of degree $e$ in $\CC\PP^d$, $d\ge 3$.
Then there are at most $O(e^3)$ points $x \in \CC\PP^d \setminus \delta$ such that $\pi_x$ restricted to $\delta$ is not generically one-to-one.
\end{lemma}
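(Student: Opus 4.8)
The plan is to induct on the dimension $d$, taking the case $d=3$ from our earlier paper \cite{LS18} as the base case, and mirroring the projection argument used above for Lemma~\ref{lem:projection1}. So assume $d \ge 4$ and that the statement holds in $\CC\PP^{d-1}$. Since $\delta$ is non-planar, its secant variety satisfies $\dim \Sec_\CC(\delta) \le 3 < d$ \cite{H92}*{Proposition~11.24}, so we may choose a projection centre $p \notin \Sec_\CC(\delta)$. Then every line through $p$ meets $\delta$ in at most one point, $\pi_p$ is generically one-to-one on $\delta$, and $\delta' := \overline{\pi_p(\delta)}$ is an irreducible non-planar curve of degree $e$ in $\CC\PP^{d-1}$.

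The heart of the argument is that bad centres project to bad centres. Suppose $x \in \CC\PP^d \setminus \delta$ is such that $\pi_x$ is not generically one-to-one on $\delta$; equivalently, a one-parameter family of secant lines of $\delta$ passes through $x$. Let $x' := \pi_p(x)$, and first treat the case in which the line $\overline{px}$ misses $\delta$, so that $x' \notin \delta'$. Each secant $\ell$ of $\delta$ through $x$ meets $\delta$ in two points $a,b$; since $p \notin \Sec_\CC(\delta)$ we have $p \notin \ell$ and $p \notin \overline{ab}$, so $\pi_p(\ell)$ is a genuine line through $x'$ meeting $\delta'$ in the two distinct points $\pi_p(a), \pi_p(b)$. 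Thus the one-parameter family of secants of $\delta$ through $x$ maps to a one-parameter family of secants of $\delta'$ through $x'$, and $\pi_{x'}$ is not generically one-to-one on $\delta'$. By the induction hypothesis there are at most $O(e^3)$ such centres $x'$.

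The main obstacle is that the fibres of $\pi_p$ are lines, so a priori many bad centres $x$ could lie over a single bad $x'$ (and some bad $x$ might have $\overline{px} \cap \delta \ne \emptyset$, giving $x' \in \delta'$). I would remove this obstacle in two steps. First, I would establish that the bad locus $B := \{x \in \CC\PP^d\setminus\delta : \pi_x \text{ is not generically one-to-one on } \delta\}$ is finite: a positive-dimensional component of $B$ would force a generic secant line of $\delta$ to pass through some point carrying a whole pencil of secants, and a dimension count against $\dim \Sec_\CC(\delta) \le 3$, together with B\'ezout applied to the cones swept out by these secants, rules this out; in the base case $d=3$ this finiteness is exactly what the argument of \cite{LS18} provides. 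Once $B$ is known to be finite, say $B = \{x_1,\dots,x_N\}$, I would choose the centre $p$ generically: besides avoiding the $3$-fold $\Sec_\CC(\delta)$, require $p$ to avoid the finitely many lines $\overline{x_ix_j}$ and the finitely many cones $\bigcup_{q\in\delta}\overline{x_iq}$, whose union has dimension at most $3 < d$. Then $\pi_p$ is injective on $B$ and each $x_i' = \pi_p(x_i)$ lies off $\delta'$, so the previous paragraph embeds $B$ into the bad locus of $\delta'$. The induction hypothesis then gives $N = |B| \le O(e^3)$, completing the induction. I expect the genuinely delicate point to be the finiteness of $B$, since everything else is a routine generic-position argument; the same projection set-up also handles Lemma~\ref{lem:projection3}.
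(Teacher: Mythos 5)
Your overall route---induction on $d$ via a projection from a centre $p \notin \Sec_\CC(\delta)$, with the $3$-dimensional case of \cite{LS18} as the base---is precisely the alternative the paper itself endorses: it writes out this induction only for Lemma~\ref{lem:projection1} and states that Lemma~\ref{lem:projection2} can be proved either the same way or by repeating the argument of \cite{LS18} word for word in $\CC\PP^d$. Your reduction ``bad centres project to bad centres'' is correct (including the point, which you assert without proof but which does hold, that $\delta'$ is again non-planar), and you rightly isolate the one place where this lemma is harder than Lemma~\ref{lem:projection1}: the fibres of $\pi_p$ are lines, so one must control how many bad centres sit over a single bad centre downstairs.

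The genuine gap is your justification of the finiteness of the bad locus $B$, on which the entire generic choice of $p$ rests, and the mechanism you sketch does not close. If $B$ contained a curve $\Gamma$, you would indeed obtain a one-parameter family of secant cones $K_x = \overline{\pi_x^{-1}(\pi_x(\delta))}$, $x\in\Gamma$, each an irreducible surface of degree at most $e/2$ containing $\delta$; but a one-parameter family of surfaces fits comfortably inside the three-dimensional $\Sec_\CC(\delta)$, so no dimension count against $\dim\Sec_\CC(\delta)\le 3$ yields a contradiction, and B\'ezout applied to two such cones (which share no component, as a cone with two vertices containing the non-planar $\delta$ is impossible) only gives $\deg(K_x\cap K_y)\le e^2/4$, which exceeds $\deg\delta=e$ as soon as $e\ge 4$---again no contradiction. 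As written, this step fails. Two repairs are available. One is to invoke the finiteness of the outer Segre locus of a non-degenerate curve (Segre's theorem; this is in effect what the argument of \cite{LS18}, via \cite{Fu11}, supplies, and is what the paper's ``word-for-word'' proof relies on). The other stays entirely inside your induction: upper semicontinuity of fibre dimension for the incidence variety $\setbuilder{(a,b,x)}{a,b\in\delta,\ a\ne b,\ x\in\overline{ab}}$, which is irreducible of dimension $3$ and maps onto the three-dimensional $\Sec_\CC(\delta)$, shows directly that $\dim\overline{B}\le 1$; one may then choose $p$ avoiding not only $\Sec_\CC(\delta)$ but also $\Sec_\CC(\overline{B})$ and the join of $\overline{B}$ with $\delta$ (all of dimension at most $3<d$), so that $\pi_p$ is injective on $\overline{B}$ and carries $B$ into the bad locus of $\delta'$; the induction hypothesis then delivers finiteness and the bound $O(e^3)$ in one stroke, with no separate finiteness lemma needed.
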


\begin{lemma}\label{lem:projection3}
Let $\delta_1$ and $\delta_2$ be two irreducible non-planar curves in $\CC\PP^d$, $d\ge 3$, of degree $e_1$ and $e_2$ respectively. 
Then there are at most $O(e_1e_2)$ points $p$ on $\delta_1$ such that $\overline{\pi_p(\delta_1\setminus\{p\})}$ and $\overline{\pi_p(\delta_2\setminus\{p\})}$ coincide.
\end{lemma}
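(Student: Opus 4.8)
The plan is to induct on the dimension $d$, taking the case $d=3$ from \cite{LS18} as the base case and mirroring the argument used above for Lemma~\ref{lem:projection1}. Throughout I assume $\delta_1\ne\delta_2$, since otherwise the displayed condition holds for every $p\in\delta_1$; the word ``two'' in the statement is therefore read as asserting $\delta_1\ne\delta_2$. The first step is to reformulate the condition on $p$ geometrically: for $p\in\delta_1$, write $C_p(\delta_i)=\overline{\bigcup_{x\in\delta_i\setminus\{p\}}px}$ for the cone over $\delta_i$ with apex $p$. Since $\overline{\pi_p(\delta_i\setminus\{p\})}$ is exactly the image of this cone in $\CC\PP^{d-1}$, the equality $\overline{\pi_p(\delta_1\setminus\{p\})}=\overline{\pi_p(\delta_2\setminus\{p\})}$ is equivalent to $C_p(\delta_1)=C_p(\delta_2)$. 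I call such a $p$ \emph{bad}.

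Now assume $d\ge4$ and that the lemma holds in dimension $d-1$. As the dimension of each $\Sec_\CC(\delta_i)$ is at most $3<d$ \cite{H92}*{Proposition~11.24}, I can choose a point $q\in\CC\PP^d$ avoiding $\delta_1\cup\delta_2\cup\Sec_\CC(\delta_1)\cup\Sec_\CC(\delta_2)$, together with (when some $\delta_i$ spans exactly a $3$-flat) that flat, so that $\pi_q$ is injective and degree-preserving on each $\delta_i$ and the images $\delta_i':=\overline{\pi_q(\delta_i)}$ are irreducible non-planar curves of degree $e_i$ in $\CC\PP^{d-1}$; a generic $q$ also keeps $\delta_1'\ne\delta_2'$, since projecting distinct curves from a generic centre keeps them distinct. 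The descent step is then: if $p\in\delta_1$ is bad and the line $pq$ does not meet $\delta_2$, then $p':=\pi_q(p)$ is bad for the pair $(\delta_1',\delta_2')$. Indeed, $q\notin\Sec_\CC(\delta_1)\supseteq C_p(\delta_1)$ for every $p$, while $q\notin C_p(\delta_2)$ precisely when $pq$ misses $\delta_2$; under these conditions projecting from $q$ sends each line $px$ with $x\in\delta_i$ to the line $p'x'$ with $x'=\pi_q(x)\in\delta_i'$, so that $\overline{\pi_q(C_p(\delta_i))}=C_{p'}(\delta_i')$, and hence $C_p(\delta_1)=C_p(\delta_2)$ forces $C_{p'}(\delta_1')=C_{p'}(\delta_2')$.

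To finish, I count. The points $p\in\delta_1$ for which $pq$ meets $\delta_2$ are exactly those of $\delta_1\cap C_q(\delta_2)$; since $\delta_1\not\subseteq C_q(\delta_2)$ (as $\delta_1'\ne\delta_2'$) and $C_q(\delta_2)$ is a surface of degree $e_2$, Theorem~\ref{thm:bezout} bounds this exceptional set by $e_1e_2$. Every remaining bad $p$ maps to a bad point $p'\in\delta_1'$, and the assignment $p\mapsto p'$ is injective because $\pi_q$ is injective on $\delta_1$; by the induction hypothesis there are at most $O(e_1e_2)$ bad points $p'$, hence at most $O(e_1e_2)$ remaining bad $p$. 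Adding in the exceptional set yields the claimed bound $O(e_1e_2)$.

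The main obstacle is the descent step, and in particular guaranteeing that the global coincidence $\delta_1'=\delta_2'$ does not occur for the chosen $q$: this is what makes the exceptional locus $\delta_1\cap C_q(\delta_2)$ finite and controllable by Bézout, and it is exactly the feature that fails when $\delta_1=\delta_2$. One must also verify that non-planarity survives the projection, which (as in the proof of Lemma~\ref{lem:projection1}) needs care only when some $\delta_i$ spans exactly a $3$-flat, in which case $q$ is additionally taken off that flat. I note that, alternatively, the $d=3$ argument of \cite{LS18} transfers essentially verbatim to all $d\ge3$, giving a direct proof; the inductive route above is presented to parallel Lemma~\ref{lem:projection1}.
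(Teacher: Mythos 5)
Your inductive route is precisely one of the two proofs the paper indicates for this lemma: the paper does not print an argument for Lemma~\ref{lem:projection3}, saying only that the $d=3$ proof of \cite{LS18} transfers essentially verbatim and that, alternatively, all three projection lemmas follow by induction on $d$ from the $3$-dimensional case in the style of the printed proof of Lemma~\ref{lem:projection1}; you take the latter route, and the geometric content of your descent step is sound. There is, however, one quantitative wrinkle. Your recursion adds the B\'ezout term $e_1e_2$ for the exceptional set $\delta_1\cap C_q(\delta_2)$ at every step of the induction, so unrolling it gives a bound of the form $O(e_1e_2)+(d-3)e_1e_2$. Since the paper stipulates in Section~\ref{sec:tools} that none of its $O(\cdot)$ constants depend on $d$, and $e_1,e_2$ can be as small as $3$ for non-planar curves regardless of $d$, what you have actually proved is $O(de_1e_2)$ rather than the claimed $O(e_1e_2)$. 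The repair is already implicit in your own setup: if $p$ is bad, then $C_p(\delta_2)=C_p(\delta_1)\subseteq\Sec_{\CC}(\delta_1)$, and you chose $q\notin\Sec_{\CC}(\delta_1)$, so $q\notin C_p(\delta_2)$ automatically --- that is, no bad point lies in your exceptional set, every bad $p$ descends to a bad $p'$, and the injectivity of $\pi_q$ on $\delta_1$ gives the clean recursion $B_d\le B_{d-1}\le\dotsb\le B_3=O(e_1e_2)$ with an absolute constant. The remaining small assertions are fine as you handle them: a generic centre keeps $\delta_1'\ne\delta_2'$ because the join of the two curves has dimension at most $3<d$, and non-planarity survives projection with the extra care you take when a curve spans exactly a $3$-flat.
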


\section{Curves of degree \texorpdfstring{$d+1$}{d+1}}\label{sec:curves}

In this paper, irreducible non-degenerate curves of degree $d+1$ in $\CC\PP^d$ play a fundamental role.
Indeed, the elliptic normal curve and rational acnodal curve mentioned in Theorem~\ref{thm:main1} are both such curves.
In this section, we describe their properties that we need.
These properties are all classical, but we did not find a reference for the group structure on singular rational curves of degree $d+1$, and therefore consider this in detail.

It is well-known in the plane that there is a group structure on any smooth cubic curve or the set of smooth points of a singular cubic.
This group has the property that three points sum to the identity if and only if they are collinear.
Over the complex numbers, the group on a smooth cubic is isomorphic to the torus $(\RR/\ZZ)^2$, and the group on the smooth points of a singular cubic is isomorphic to $(\CC,+)$ or $(\CC^*,\cdot)$ depending on whether the singularity is a cusp or a node.
Over the real numbers, the group on a smooth cubic is isomorphic to $\RR/\ZZ$ or $\RR/\ZZ\times\ZZ_2$ depending on whether the real curve has one or two semi-algebraically connected components, and the group on the smooth points of a singular cubic is isomorphic to $(\RR,+)$, $(\RR,+)\times\ZZ_2$, or $\RR/\ZZ$ depending on whether the singularity is a cusp, a crunode, or an acnode. 
See for instance \cite{GT13} for a more detailed description.

In higher dimensions, it turns out that an irreducible non-degenerate curve of degree $d+1$ does not necessarily have a natural group structure, but if it has, the behaviour is similar to the planar case.
For instance, in $\CC\PP^3$, an irreducible non-degenerate quartic curve is either an elliptic quartic, with a group isomorphic to an elliptic curve such that four points on the curve are coplanar if and only if they sum to the identity, or a rational curve.
There are two types, or species, of rational quartics.
The rational quartic curves of the first species are intersections of two quadrics (as are elliptic quartics), they are always singular, and there is a group on the smooth points such that four points on the curve are coplanar if and only if they sum to the identity.
Those of the second species lie on a unique quadric, are smooth, and there is no natural group structure analogous to the other cases.
See \cite{LS18} for a more detailed account.
The picture is similar in higher dimensions.
\begin{definition}[Clifford \cite{Clifford}, Klein \cite{Klein}]
An \emph{elliptic normal curve} is an irreducible non-degenerate smooth curve of degree $d+1$ in $\CC\PP^d$ isomorphic to an elliptic curve in the plane.
\end{definition}
\begin{prop}[\cite{S09}*{Exercise 3.11 and Corollary 5.1.1}, \cite{S94}*{Corollary 2.3.1}]\label{prop:elliptic_group}
An elliptic normal curve $\delta$ in $\CC\PP^d$, $d\ge 2$, has a natural group structure such that $d+1$ points in $\delta$ lie on a hyperplane if and only if they sum to the identity.
This group is isomorphic to $(\RR/\ZZ)^2$.

If the curve is real, then the group is isomorphic to $\RR/\ZZ$ or $\RR/\ZZ\times\ZZ_2$ depending on whether the real curve has one or two semi-algebraically connected components.
\end{prop}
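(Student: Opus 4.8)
The plan is to transfer the intrinsic group structure of the underlying elliptic curve to $\delta$ via the Abel--Jacobi map, reading off the cohyperplanarity condition from the Riemann--Roch theorem. Write $\delta \cong E$, where $E$ is a smooth genus-$1$ curve. Since $\delta$ is non-degenerate of degree $d+1$ in $\CC\PP^d$, its embedding is given by a complete linear system $|L|$ of a line bundle $L$ of degree $d+1$ on $E$: indeed, by Riemann--Roch $h^0(L) = \deg L = d+1$ for a degree-$(d+1)$ bundle on a genus-$1$ curve, and $L$ is very ample since $\deg L \ge 3$. The key observation is that the hyperplane sections of $\delta$ are exactly the effective divisors in $|L|$, so that $d+1$ points $P_1, \dots, P_{d+1}$ of $\delta$ (counted with multiplicity) lie on a hyperplane if and only if $P_1 + \dots + P_{d+1} \sim L$ as divisors on $E$.

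Next I would fix an origin. Choosing a base point $O \in E$ identifies $\operatorname{Pic}^0(E)$ with $E$ via $P \mapsto [P - O]$ and makes $E$ into an abelian group $(E, \oplus)$ isomorphic, over $\CC$, to a complex torus $\CC/\Lambda \cong (\RR/\ZZ)^2$. By Abel's theorem on a genus-$1$ curve, $[\sum_i (P_i - O)] = [(\bigoplus_i P_i) - O]$, so $\sum_i P_i \sim L$ if and only if $\bigoplus_i P_i = c$, where $c \in E$ is the fixed point with $[c - O] = [L - (d+1)O]$. To obtain the clean statement ``sum to the identity,'' I would choose $O$ so that $c = O$, that is, so that $(d+1)\,O \sim L$; such an $O$ exists because multiplication by $d+1$ is surjective on $E(\CC) \cong (\RR/\ZZ)^2$. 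With this choice, $P_1, \dots, P_{d+1}$ lie on a hyperplane if and only if $P_1 \oplus \dots \oplus P_{d+1} = O$, and the group is $(\RR/\ZZ)^2$, which proves the complex part.

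For the real case, the curve, the bundle $L$, and hence the whole construction are defined over $\RR$, so it suffices to understand the real points. The group $\delta \cap \RR\PP^d = E(\RR)$ is a compact one-dimensional real Lie group, hence isomorphic to either $\RR/\ZZ$ or $\RR/\ZZ \times \ZZ_2$; the two cases are distinguished by whether $E$ has one or three real $2$-torsion points, equivalently whether the real curve has one or two semi-algebraically connected components. The one genuinely delicate point, which I expect to be the main obstacle, is that the base point $O$ must be taken to be \emph{real} for the cohyperplanarity characterisation to remain valid over $\RR$; this amounts to finding a real $O$ with $(d+1)O \sim L$. When $E(\RR)$ is connected, or when $d+1$ is odd, multiplication by $d+1$ is surjective on $E(\RR)$ and such an $O$ exists at once. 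The remaining case, $E(\RR)$ disconnected and $d+1$ even, requires checking that the real divisor class $[L]$ lies in the index-$2$ image of multiplication by $d+1$; this is where I would invoke the cited descriptions of the real Picard group and the real structure of $E(\RR)$. Once a real origin is fixed, the argument of the previous paragraph applies verbatim to real points and real hyperplanes, giving the stated group.
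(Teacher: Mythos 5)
The paper does not actually prove this proposition: it is stated with citations to Silverman, which between them supply the hyperplane characterisation over $\CC$, the isomorphism $E(\CC)\cong(\RR/\ZZ)^2$, and the isomorphism type of $E(\RR)$. Your complex-case argument (the embedding is by a complete linear system, Riemann--Roch, Abel's theorem, and a choice of origin $O$ with $(d+1)O\sim L$) is the standard route and is correct as written.

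The problem is the step you flag yourself in the real case, and it cannot be closed in the way you hope: the fact you want to ``invoke'' from the references is false in general. When $E(\RR)$ has two components and $d+1$ is even, multiplication by $d+1$ on $E(\RR)\cong\RR/\ZZ\times\ZZ_2$ has image equal to the identity component, and the hyperplane class need not lie in the corresponding index-$2$ coset. Concretely, take $E\colon y^2=x^3-x$, whose real locus has two components (the oval $-1\le x\le 0$ and the unbounded branch containing the identity $\mathcal{O}$), and embed $E$ into $\PP^3$ by the complete linear system of $L=3(\mathcal{O})+\bigl((0,0)\bigr)$; a basis of sections is $1,x,y,y/x$, so this is a real elliptic normal quartic with two real components. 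Four of its points lie on a plane if and only if they sum to $(0,0)$ in the standard group law, and $(0,0)$ lies on the oval, whereas $4E(\RR)$ is the identity component; hence there is no real $O$ with $4O\sim L$, and no translate of the group law on $E(\RR)$ turns the coplanarity condition into ``sum to the identity''. So your argument proves the proposition only when the coplanarity constant $[L-(d+1)O_0]$ lies in the identity component of $E(\RR)$ (automatic when $d+1$ is odd or $E(\RR)$ is connected); in the remaining case the true statement is the analogue of the crunodal alternative in Proposition~\ref{prop:rational_group}, namely that $d+1$ real points lie on a hyperplane if and only if they sum to a fixed element that may lie in the non-identity component. To complete a proof of the statement as written you would need to add this as a hypothesis or show that it holds for the curves actually arising in the paper; as it stands the gap is a genuine one in the statement itself, not merely in your write-up.
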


A similar result holds for singular rational curves of degree $d+1$.
Since we need to work with such curves and a description of their group structure is not easily found in the literature, we give a detailed discussion of their properties in the remainder of this section.

A \emph{rational curve} $\delta$ in $\FF\PP^d$ of degree $e$ is a curve that can be parametrised by the projective line,
\[ \delta\colon \FF\PP^1 \to \FF\PP^d, \quad [x,y] \mapsto [q_0(x,y),\dots,q_{d}(x,y)], \]
where each $q_i$ is a homogeneous polynomial of degree $e$ in the variables $x$ and $y$.
The following lemma is well known (see for example \cite{SR85}*{p.~38, Theorem~VIII}), and can be proved by induction from the planar case using projection.

\begin{prop}\label{prop:curves}
An irreducible non-degenerate curve of degree $d+1$ in $\CC\PP^d$, $d \ge 2$, is either an elliptic normal curve or rational.
\end{prop}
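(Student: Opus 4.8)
The plan is to induct on the dimension $d$, projecting $\delta$ from a suitably chosen point of itself down to $\CC\PP^{d-1}$ and invoking the inductive hypothesis on the image. The base case is $d=2$: an irreducible non-degenerate curve of degree $3$ in $\CC\PP^2$ is an irreducible plane cubic, and it is classical that such a curve is either smooth, hence a plane elliptic curve and so an elliptic normal curve by definition, or has a single node or cusp, in which case it is rational. This gives the two alternatives in the required form.

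For the inductive step, assume $d\ge 3$ and that the statement holds in $\CC\PP^{d-1}$. Since $\delta$ is irreducible it has only finitely many singular points, and by Lemma~\ref{lem:projection1} there are only $O((d+1)^4)$ points $p\in\delta$ for which $\pi_p$ restricted to $\delta\setminus\{p\}$ fails to be generically one-to-one. As $\delta$ is infinite, I can fix a smooth point $p\in\delta$ with $\pi_p|_{\delta\setminus\{p\}}$ generically one-to-one. Then $\delta':=\overline{\pi_p(\delta\setminus\{p\})}$ is irreducible, being the closure of the image of an irreducible variety under a morphism, and by the degree count recorded in Section~\ref{ssec:projections} for projection from a smooth point it has degree $(d+1)-1=d$ in $\CC\PP^{d-1}$. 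Moreover $\delta'$ is non-degenerate, since $\delta$ spans $\CC\PP^d$ and therefore its image under $\pi_p$ spans $\CC\PP^{d-1}$. Thus $\delta'$ is an irreducible non-degenerate curve of degree $(d-1)+1$ in $\CC\PP^{d-1}$, and the inductive hypothesis applies: $\delta'$ is elliptic normal or rational. Because $\pi_p|_{\delta\setminus\{p\}}$ is generically one-to-one, hence birational onto $\delta'$, the curves $\delta$ and $\delta'$ share the same geometric genus.

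If $\delta'$ is rational then $\delta$ has geometric genus $0$ and is therefore rational, and we are done. The remaining case is $\delta'$ elliptic normal, so that $\delta$ has geometric genus $1$, and here I must upgrade the genus information to the conclusion that $\delta$ is actually smooth and isomorphic to a plane elliptic curve. For this I would pass to the normalization $\nu\colon E\to\delta\subset\CC\PP^d$ with $E$ an elliptic curve, and set $L=\nu^*\mathcal{O}(1)$. Since $\nu$ is birational, $\deg L=\deg\delta=d+1$, and since $\delta$ is non-degenerate the restriction map $H^0(\CC\PP^d,\mathcal{O}(1))\to H^0(E,L)$ is injective, so its image is a $(d+1)$-dimensional subspace of $H^0(E,L)$. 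By Riemann--Roch on $E$ one has $h^0(L)=\deg L=d+1$, so this subspace is all of $H^0(E,L)$ and $\nu$ is the morphism associated with the complete linear system $|L|$. As $\deg L=d+1\ge 3$, the system $|L|$ is very ample, whence $\nu$ is a closed embedding and $\delta\cong E$ is a smooth curve of degree $d+1$ isomorphic to a plane elliptic curve, i.e.\ an elliptic normal curve.

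The main obstacle is precisely this last step: the projection induction transports the geometric genus but not smoothness, since projecting can both create and resolve singularities, so one cannot simply quote ``$\delta'$ smooth $\Rightarrow$ $\delta$ smooth''. The Riemann--Roch argument circumvents this by showing that any non-degenerate degree-$(d+1)$ image of an elliptic curve in $\CC\PP^d$ is necessarily embedded by a complete, very ample linear system and hence smooth. The two bookkeeping facts used above, namely that a generically one-to-one projection between curves is birational (as noted in Section~\ref{ssec:projections}) and that the degree of the image drops by exactly one when projecting from a smooth point, are the quantitative inputs that make the induction close.
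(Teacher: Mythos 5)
Your argument is correct, and it is exactly the route the paper indicates: the paper gives no detailed proof of Proposition~\ref{prop:curves}, citing Semple--Roth and remarking only that it ``can be proved by induction from the planar case using projection,'' which is precisely your induction via a generically one-to-one projection from a smooth point (using Lemma~\ref{lem:projection1} and the degree count of Section~\ref{ssec:projections}). Your Riemann--Roch step showing that a non-degenerate genus-one curve of degree $d+1$ in $\CC\PP^d$ is embedded by a complete very ample system correctly closes the one genuinely non-trivial gap in that sketch, namely upgrading geometric genus $1$ to smoothness.
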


We next describe when an irreducible non-degenerate rational curve of degree $d+1$ in $\CC\PP^d$ has a natural group structure.
It turns out that this happens if and only if the curve is singular.

We write $\nu_{d+1}$ for the \emph{rational normal curve} in $\CC\PP^{d+1}$ \cite{H92}*{Example~1.14}, which we parametrise as
\[ \nu_{d+1}:[x,y] \mapsto [y^{d+1},-xy^d,x^2y^{d-1},\dotsc,(-x)^{d-1}y^2,(-x)^dy,(-x)^{d+1}]. \]
Any irreducible non-degenerate rational curve $\delta$ of degree $d+1$ in $\CC\PP^d$ is the projection of the rational normal curve, and we have
\[ \delta[x,y] = [y^{d+1},-xy^d,x^2y^{d-1},\dotsc,(-x)^{d-1}y^2,(-x)^dy,(-x)^{d+1}] A, \]
where $A$ is a $(d+2)\times(d+1)$ matrix of rank $d+1$ (since $\delta$ is non-degenerate) with entries derived from the coefficients of the polynomials $q_i$ of degree $d+1$ in the parametrisation of the curve (with suitable alternating signs).
Thus $\delta \subset \CC\PP^d$ is the image of $\nu_{d+1}$ under the projection map $\pi_p$ defined by $A$.
In particular, the point of projection $p=[p_0,p_1,\dots,p_{d+1}]\in\CC\PP^{d+1}$ is the ($1$-dimensional) kernel of $A$.
If we project $\nu_{d+1}$ from a point $p\in \nu_{d+1}$, then we obtain a rational normal curve in $\CC\PP^d$.
However, since $\delta$ is of degree $d+1$, necessarily $p\notin \nu_{d+1}$.
Conversely, it can easily be checked that for any $p\notin \nu_{d+1}$, the projection of $\nu_{d+1}$ from $p$ is a rational curve of degree $d+1$ in $\CC\PP^d$.
We will use the notation $\delta_p$ for this curve.
We summarise the above discussion in the following proposition that will be implicitly used in the remainder of the paper.
\begin{prop}
An irreducible non-degenerate rational curve of degree $d+1$ in $\CC\PP^d$ is projectively equivalent to $\delta_p$ for some $p\in\CC\PP^{d+1}\setminus\nu_{d+1}$.
\end{prop}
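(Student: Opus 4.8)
The plan is to make the matrix computation sketched just before the statement rigorous; the proposition is essentially a summary of that discussion, so the work is bookkeeping, and the one point needing genuine care is matching the explicit matrix map with the abstract projection $\pi_p$.

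First I would fix a parametrisation $\delta\colon[x,y]\mapsto[q_0(x,y),\dots,q_d(x,y)]$ by binary forms $q_i$ of degree $d+1$, as in the definition of a rational curve of degree $d+1$. Each $q_i$ is a $\CC$-linear combination of the monomials $y^{d+1},xy^d,\dots,x^{d+1}$, and these monomials are, up to the alternating signs, precisely the coordinate functions of $\nu_{d+1}$. Recording the coefficients of $q_0,\dots,q_d$ in this basis (absorbing the signs) as the columns of a $(d+2)\times(d+1)$ matrix $A$ then gives the identity $\delta[x,y]=\nu_{d+1}[x,y]\,A$ of row vector times matrix, valid for all $[x,y]$.

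Next I would determine $\rank A$ and read off $p$. Non-degeneracy of $\delta$ means there is no nontrivial relation $\sum_i c_iq_i=0$, since such a relation would place $\delta$ inside the hyperplane $\sum_i c_iX_i=0$; hence $q_0,\dots,q_d$ are linearly independent, the columns of $A$ are independent, and $\rank A=d+1$. Therefore the linear map $\CC^{d+2}\to\CC^{d+1}$, $v\mapsto vA$, is surjective with one-dimensional left kernel spanned by some $p$ satisfying $pA=0$. This is exactly the data of a projection from $p$: any two surjections $\CC^{d+2}\to\CC^{d+1}$ sharing the kernel $\langle p\rangle$ differ by an automorphism of the target $\CC\PP^d$, so $\delta=\nu_{d+1}\,A$ agrees with $\pi_p(\nu_{d+1})=\delta_p$ up to a projective transformation of $\CC\PP^d$.

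Finally I would check $p\notin\nu_{d+1}$. This is immediate from the discussion preceding the statement: projecting $\nu_{d+1}$ from a point of itself yields a rational normal curve of degree $d$ in $\CC\PP^d$ (the curve has no trisecant, so the projection is injective off $p$). Were $p\in\nu_{d+1}$, we would obtain $\deg\delta=d$, contradicting $\deg\delta=d+1$; hence $p\in\CC\PP^{d+1}\setminus\nu_{d+1}$, and $\delta$ is projectively equivalent to $\delta_p$. The only step I expect to require attention is the identification in the previous paragraph of the concrete map $v\mapsto vA$ with the projection $\pi_p$ up to projective equivalence; once this is in place, the degree count ruling out $p\in\nu_{d+1}$ finishes the argument.
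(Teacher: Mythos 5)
Your proposal is correct and follows essentially the same route as the paper, which proves this proposition via the discussion immediately preceding it: writing the parametrisation as $\nu_{d+1}[x,y]\,A$ for a rank-$(d+1)$ matrix $A$ whose kernel spans the projection point $p$, and ruling out $p\in\nu_{d+1}$ by the degree count. Your explicit justification that two surjections with the same kernel differ by an automorphism of the target is a slightly more careful rendering of the paper's identification of the map $v\mapsto vA$ with $\pi_p$, but the argument is the same.
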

We use the projection point $p$ to define a binary form and a multilinear form associated to $\delta_p$.
The \emph{fundamental binary form} associated to $\delta_p$ is the homogeneous polynomial of degree $d+1$ in two variables $f_p(x,y) := \sum_{i=0}^{d+1}p_i\binom{d+1}{i}x^{d+1-i}y^i$.
Its \emph{polarisation} is the multilinear form $F_p\colon(\FF^2)^{d+1}\to\FF$ \cite{D03}*{Section~1.2} defined by
\[ F_p(x_0,y_0,x_1,y_1,\dots,x_d,y_d) := \frac{1}{(d+1)!}\sum_{I\subseteq\{0,1,\dots,d\}} (-1)^{d+1-|I|} f_p\left(\sum_{i\in I} x_i,\sum_{i\in I} y_i\right). \]
Consider the multilinear form $G_p(x_0,y_0,\dots,x_d,y_d) = \sum_{i=0}^{d+1} p_i P_i$,
where 
\begin{equation}\label{eq:P_i}
P_i(x_0,y_0,x_1,y_1,\dots,x_d,y_d) := \sum_{I\in\binom{\{0,1,\dots,d\}}{i}}\prod_{j\in\overline{I}} x_j\prod_{j\in I} y_j
\end{equation}
for each $i=0,\dots,d+1$.
Here the sum is taken over all subsets $I$ of $\{0,1,\dots,d\}$ of size $i$, and $\overline{I}$ denotes the complement of $I$ in $\{0,1,\dots,d\}$.
It is 
easy to see that the binary form $f_p$ is the \emph{restitution} of $G_p$, namely \cite{D03}*{Section~1.2}
\[ f_p(x,y) = G_p(x,y,x,y,\dots,x,y). \]
Since the polarisation of the restitution of a multilinear form is itself \cite{D03}*{Section~1.2},  
we must thus have $F_p = G_p$.
(This can also be checked directly.)

\begin{lemma}\label{lem:cohyperplane}
Let $\delta_p$ be an irreducible non-degenerate rational curve of degree $d+1$ in $\CC\PP^d$, $d\ge 2$, where $p\in\CC\PP^{d+1}\setminus \nu_{d+1}$.
A hyperplane intersects $\delta_p$ in $d+1$ points $\delta_p[x_i,y_i]$, $i=0,\dots,d$, counting multiplicity, if and only if $F_p(x_0,y_0,x_1,y_1,\dots,x_d,y_d)=0$.
\end{lemma}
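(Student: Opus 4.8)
The plan is to translate the geometric condition ``a hyperplane meets $\delta_p$ in the $d+1$ points $\delta_p[x_i,y_i]$'' into the vanishing of the multilinear form $F_p=G_p$, by exploiting the description of $\delta_p$ as the projection $\pi_p$ of the rational normal curve $\nu_{d+1}$ from the point $p$, together with the matrix $A$ whose kernel is $p$. First I would set up the dictionary between hyperplanes of $\CC\PP^d$ and hyperplanes of $\CC\PP^{d+1}$: a hyperplane of $\CC\PP^d$ is given by a linear functional, i.e.\ a column vector $v\in\CC^{d+1}$, and since $\delta_p[x,y]=\nu_{d+1}[x,y]\,A$, the point $\delta_p[x,y]$ lies on the hyperplane $\{w : wv=0\}$ if and only if $\nu_{d+1}[x,y]\,(Av)=0$. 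Writing $u:=Av\in\CC^{d+2}$, this says the point $\nu_{d+1}[x,y]$ lies on the hyperplane of $\CC\PP^{d+1}$ defined by $u$. The key observation is that, because $A$ has rank $d+1$ with kernel spanned by $p$, its image (as $v$ ranges over $\CC^{d+1}$, viewing $A$ acting on the right) is exactly the space of $u\in\CC^{d+2}$ orthogonal to $p$, i.e.\ $u\cdot p=\sum_i u_i p_i=0$. Thus hyperplanes of $\CC\PP^d$ correspond bijectively to hyperplanes of $\CC\PP^{d+1}$ through $p$.

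The next step is the classical fact about the rational normal curve: given our explicit parametrisation, the hyperplane section of $\nu_{d+1}$ by $u=[u_0,\dots,u_{d+1}]$ consists of the points $\nu_{d+1}[x_i,y_i]$ for $i=0,\dots,d$ (counting multiplicity) if and only if the binary form $\sum_{j=0}^{d+1}u_j(-x)^jy^{d+1-j}$ (or the appropriately signed version matching our parametrisation) factors as $\prod_{i=0}^{d}(y_i x - x_i y)$, up to scalar. Equivalently, by comparing coefficients, the $u_j$ are, up to sign and binomial factors, the elementary-symmetric-type expressions $P_j(x_0,y_0,\dots,x_d,y_d)$ of \eqref{eq:P_i}. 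This is the place where the specific alternating-sign normalisation of $\nu_{d+1}$ earns its keep: it is chosen precisely so that the roots of the hyperplane section read off directly as the factors $(y_ix-x_iy)$ and the coefficients become the $P_j$.

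Putting these two steps together: the $d+1$ points $\delta_p[x_i,y_i]$ lie on a common hyperplane of $\CC\PP^d$ if and only if the corresponding $u$ with $u_j\propto P_j(x_0,y_0,\dots,x_d,y_d)$ satisfies $u\cdot p=0$, that is, $\sum_{j=0}^{d+1} p_j P_j=0$. But by definition $G_p=\sum_{j=0}^{d+1}p_jP_j$, and we have already identified $G_p=F_p$. Hence the common hyperplane exists exactly when $F_p(x_0,y_0,\dots,x_d,y_d)=0$, which is the claim. The main obstacle is bookkeeping the signs and binomial coefficients so that the coefficient-comparison in the rational-normal-curve step matches the normalisation of $f_p$ (which carries the factors $\binom{d+1}{i}$) and of $P_i$; I would handle this by computing the hyperplane section of $\nu_{d+1}$ directly from the given parametrisation and checking that the root–coefficient relations produce exactly $P_i$ with the signs absorbed into the chosen parametrisation, rather than tracking them abstractly.
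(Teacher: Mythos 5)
Your proposal is correct, and its core coincides with the paper's: the paper's Vandermonde computation of the determinant $D$ is exactly a proof of your ``classical fact'' that the hyperplane of $\CC\PP^{d+1}$ cutting $\nu_{d+1}$ in the parameter values $[x_i,y_i]$ has coordinate vector proportional to $(P_0,\dots,P_{d+1})$, so that containment of $p$ becomes $\sum_j p_jP_j=G_p=F_p=0$. (Your sign bookkeeping does work out: comparing $\sum_j u_j(-x)^jy^{d+1-j}$ with $c\prod_i(y_ix-x_iy)$ gives $u_j=c(-1)^{d+1}P_j$, a uniform sign.) Where you genuinely diverge is in the treatment of non-distinct points. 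The paper's determinant identity degenerates when two $[x_i,y_i]$ coincide (the Vandermonde factor vanishes), so it handles multiplicities by a separate limiting argument: perturbing the hyperplane via Bertini for one direction, and perturbing the points and using multilinearity of $F_p$ for the other. Your formulation via the root--coefficient correspondence of the binary form $\nu_{d+1}[x,y]\cdot u$ absorbs multiplicities automatically, since the multiplicity of $[x_i,y_i]$ as a root of that form is precisely the intersection multiplicity of the hyperplane with the parametrised curve; combined with your (correct) observation that the image of $A$ on column vectors is exactly the hyperplane $\{u: p\cdot u=0\}$, both directions of the equivalence follow uniformly, with no perturbation needed. The one point worth making explicit is that the pullback form is not identically zero (equivalently, $\nu_{d+1}$ is not contained in any hyperplane) and that $(P_0,\dots,P_{d+1})\neq 0$, so that the factorisation and the resulting hyperplane are well defined; both are immediate. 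Your route is thus a cleaner packaging of the same identity, at the cost of invoking (or verifying, as you propose) the standard description of hyperplane sections of the rational normal curve.
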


\begin{proof}
We first prove the statement for distinct points $[x_i,y_i]\in\CC\PP^1$.
Then the points $\delta_p[x_i,y_i]$ are all on a hyperplane if and only if the hyperplane in $\CC\PP^{d+1}$ through the points $\nu_{d+1}[x_i,y_i]$ passes through $p$.
It will be sufficient to prove the identity
\begin{equation}\label{identity}
 D:= \det\begin{pmatrix} \nu_{d+1}[x_0,y_0] \\ \vdots \\  \nu_{d+1}[x_d,y_d] \\ p \end{pmatrix}
= F_p(x_0,y_0,x_1,y_1,\dots,x_d,y_d)
  \prod_{0 \le j<k \le d} \begin{vmatrix} x_j & x_k\\ y_j & y_k \end{vmatrix},
\end{equation}
since the second factor on the right-hand side does not vanish because the points $[x_i, y_i]$ are distinct.
We first note that
\begin{align}
D &=
\begin{vmatrix}
    y_0^{d+1} & -x_0y_0^d & x_0^2y_0^{d-1} & \dotsc & (-x_0)^dy_0 & (-x_0)^{d+1} \\
    \vdots & \vdots & \vdots & \ddots & \vdots & \vdots \\
    y_d^{d+1} & -x_d y_d ^d & x_d ^2y_d ^{d-1} & \dotsc & (-x_d )^dy_d  & (-x_d )^{d+1} \\
    p_0 &  p_1 & p_2 & \dotsc & p_d & p_{d+1}
\end{vmatrix} \notag \\
& = (-1)^{\left \lfloor \frac{d+2}{2} \right \rfloor}
\begin{vmatrix}
    y_0^{d+1} & x_0y_0^d & x_0^2y_0^{d-1} & \dotsc & x_0^d y_0 & x_0^{d+1} \\
    \vdots & \vdots & \vdots & \ddots & \vdots & \vdots \\
    y_d^{d+1} & x_d y_d ^d & x_d ^2y_d ^{d-1} & \dotsc & x_d^dy_d  & x_d^{d+1} \\[3pt]
    p_0 &  -p_1 & p_2 & \dotsc & (-1)^d p_d & (-1)^{d+1} p_{d+1}
\end{vmatrix}. \label{det} 
\end{align}
We next replace $(-1)^i p_i$ by $x^i y^{d+1-i}$ for each $i=0,\dots,d+1$ in the last row of the determinant in \eqref{det} and obtain the Vandermonde determinant
\begin{align*}
\phantom{D} &\mathrel{\phantom{=}}
(-1)^{\left \lfloor \frac{d+2}{2} \right \rfloor}
\begin{vmatrix}
    y_0^{d+1} & x_0y_0^d & x_0^2y_0^{d-1} & \dotsc & x_0^d y_0 & x_0^{d+1} \\
    \vdots & \vdots & \vdots & \ddots & \vdots & \vdots \\
    y_d^{d+1} & x_d y_d^d & x_d^2 y_d^{d-1} & \dotsc & x_d^d y_d  & x_d^{d+1} \\[3pt]
    y^{d+1} &  x y^d & x^2 y^{d-1} & \dotsc & x^d y & x^{d+1}
\end{vmatrix} \\
&=  (-1)^{\left \lfloor \frac{d+2}{2} \right \rfloor}
\prod_{0\le j<k\le d}
\begin{vmatrix}
    y_j & y_k \\
    x_j & x_k
\end{vmatrix}
\prod_{0\le j\le d}
\begin{vmatrix}
    y_j & y \\
    x_j & x
\end{vmatrix} \\
&= (-1)^{\left \lfloor \frac{d+2}{2} \right \rfloor} (-1)^{\binom{d+2}{2}}
\prod_{0\le j<k\le d}
\begin{vmatrix}
    x_j & x_k \\
    y_j & y_k
\end{vmatrix} \prod_{0\le j\le d}
\begin{vmatrix}
    x_j & x \\
    y_j & y
\end{vmatrix}.
\end{align*}
Finally, note that $(-1)^{\lfloor(d+2)/2\rfloor} (-1)^{\binom{d+2}{2}} = 1$ and that the coefficient of $x^i y^{d+1-i}$ in $\prod_{0\le j\le d} \begin{vmatrix} x_j & x \\ y_j & y \end{vmatrix}$ is
\[ \sum_{I\subseteq\binom{\{0,\dots,d\}}{i}}\prod_{j\in I}(-y_j)\prod_{j\in\overline{I}} x_j = (-1)^iP_i, \]
where $P_i$ is as defined in \eqref{eq:P_i}.
It follows that the coefficient of $p_i$ in \eqref{det} is $P_i$, and \eqref{identity} follows.

We next complete the argument for the case when the points $[x_i,y_i]$ are not all distinct.
First suppose that a hyperplane $\Pi$ intersects $\delta_p$ in $\delta_p[x_i,y_i]$, $i=0,\dots,d$.
By Bertini's theorem \cite{H77}*{Theorem~II.8.18 and Remark~II.8.18.1}, there is an arbitrarily close perturbation $\Pi'$ of $\Pi$ that intersects $\delta_p$ in distinct points $\delta_p[x_i',y_i']$.
By what has already been proved, $F_p(x_0',y_0',\dots,x_d',y_d')=0$.
Since $\Pi'$ is arbitrarily close and $F_p$ is continuous, $F_p[x_0,y_0,\dots,x_d,y_d]=0$.

Conversely, suppose that $F_p(x_0,y_0,\dots,x_d,y_d)=0$ where the $[x_i,y_i]$ are not all distinct.
Perturb the points $[x_0,y_0],\dots,[x_{d-1},y_{d-1}]$ by an arbitrarily small amount to $[x_0',y_0'],\dots,[x_{d-1}',y_{d-1}']$ respectively, so as to make $\delta_p[x_0',y_0'],\dots,\delta_p[x_{d-1}',y_{d-1}']$ span a hyperplane $\Pi'$ that intersects $\delta_p$ again in $\delta_p[x_d',y_d']$, say, and so that $[x_0',y_0'],\dots,[x_{d}',y_{d}']$ are all distinct.
If we take the limit as $[x_i',y_i']\to[x_i,y_i]$ for each $i=0,\dots,d-1$, we obtain a hyperplane $\Pi$ intersecting $\delta_p$ in $\delta_p[x_0,y_0],\dots,\delta_p[x_{d-1},y_{d-1}],\delta_p[x_d'',y_d'']$, say.
Then $F_p(x_0,y_0,\dots,x_{d-1},y_{d-1},x_d'',y_d'')=0$.
Since the multilinear form $F_p$ is non-trivial, it follows that $[x_d,y_d]=[x_d'',y_d'']$.
Therefore, $\Pi$ is a hyperplane that intersects $\delta_p$ in $\delta_p[x_i,y_i]$, $i=0,\dots,d$.
\end{proof}

The secant variety $\Sec_{\CC}(\nu_{d+1})$ of the rational normal curve $\nu_{d+1}$ in $\CC\PP^{d+1}$ is equal to the set of points that lie on a proper secant or tangent line of $\nu_{d+1}$, that is, on a line with intersection multiplicity at least $2$ with $\nu_{d+1}$.
We also define the real secant variety of $\nu_{d+1}$ to be the set $\Sec_{\RR}(\nu_{d+1})$ of points in $\RR\PP^{d+1}$ that lie on a line that either intersects $\nu_{d+1}$ in two distinct real points or is a tangent line of $\nu_{d+1}$.
The \emph{tangent variety} $\Tan_{\FF}(\nu_{d+1})$ of $\nu_{d+1}$ is defined to be the set of points in $\FF\PP^{d+1}$ that lie on a tangent line of $\nu_{d+1}$.
We note that although $\Tan_{\RR}(\nu_{d+1}) = \Tan_{\CC}(\nu_{d+1})\cap\RR\PP^{d+1}$, we only have a proper inclusion $\Sec_{\RR}(\nu_{d+1}) \subset \Sec_{\CC}(\nu_{d+1})\cap\RR\PP^{d+1}$ for $d\ge 2$.

We will need a concrete description of $\Sec_{\CC}(\nu_{d+1})$ and its relation to the smoothness of the curves $\delta_p$.
For any $p\in\FF\PP^{d+1}$ and $k=2,\dots,d-1$, define the $(k+1)\times(d-k+2)$ matrix
\begin{equation*}
M_{k}(p) := \begin{pmatrix}
p_0 & p_1 & p_2 & \dots & p_{d-k+1} \\ 
p_1 & p_2 & p_3 & \dots & p_{d-k+2} \\
\vdots & \vdots & \vdots & \ddots & \vdots \\
p_k & p_{k+1} & p_{k+2} & \dots & p_{d+1}
\end{pmatrix}.
\end{equation*}

Suppose that $\delta_p$ has a double point, say $\delta_p[x_0,y_0]=\delta_p[x_1,y_1]$.
This is equivalent to $p$, $\nu_{d+1}[x_0,y_0]$, and $\nu_{d+1}[x_1,y_1]$ being collinear, which is equivalent to $p$ being on the secant variety of $\nu_{d+1}$.
(In the degenerate case where $[x_0,y_0]=[x_1,y_1]$, we have that $p\in\Tan_\FF(\nu_{d+1})$.)
Then $\delta_p[x_0,y_0]$, $\delta_p[x_1,y_1]$, $\delta_p[x_2,y_2]$,\dots, $\delta_p[x_d,y_d]$ are on a hyperplane in $\FF\PP^d$ for all $[x_2,y_2],\dots,[x_d,y_d]\in\FF\PP^1$.
It follows that the coefficients of $F_p(x_0,y_0,x_1,y_1,x_2,y_2,\dots,x_d,y_d)$ as a polynomial in $x_2,y_2,\dots,x_d,y_d$ all vanish, that is,
\[ p_ix_0x_1 + p_{i+1}(x_0y_1+y_0x_1) + p_{i+2}y_0y_1 = 0\]
for all $i=0,\dots,d-1$.
This can be written as
$[x_0x_1, x_0y_1 + y_0x_1, y_0y_1] M_{2}(p) = 0$.
Conversely, if $M_{2}(p)$ has rank $2$ with say 
$[c_0, 2c_1, c_2] M_{2}(p) = 0$,
then there is a non-trivial solution to the linear system with $c_0 = x_0x_1$, $c_1 = x_0y_1 + y_0x_1$, $c_2 = y_0y_1$, and we have $c_0x^2+2c_1xy+c_2y^2 = (x_0x+y_0y)(x_1x+y_1y)$.
In the degenerate case where $[x_0,y_0]=[x_1,y_1]$, we have that the quadratic form has repeated roots.

It follows that $M_{2}(p)$ has rank at most $2$ if and only if $p\in\Sec_{\CC}(\nu_{d+1})$ (also note that $M_{2}(p)$ has rank $1$ if and only if $p\in\nu_{d+1}$).
We note for later use that since the null space of $M_{2}(p)$ is $1$-dimensional if it has rank $2$, it follows that each $p\in\Sec_{\CC}(\nu_{d+1})$ lies on a unique secant (which might degenerate to a tangent).
This implies that $\delta_p$ has a unique singularity when $p\in\Sec_{\CC}(\nu_{d+1})\setminus{\nu_{d+1}}$, which is a node if $p\in\Sec_{\CC}(\nu_{d+1})\setminus\Tan_{\CC}(\nu_{d+1})$ and a cusp if $p\in\Tan_{\CC}(\nu_{d+1})\setminus{\nu_{d+1}}$.
In the real case there are two types of nodes.
If $p\in\Sec_{\RR}(\nu_{d+1})\setminus\nu_{d+1}$, then the roots $[x_0,y_0],[x_1,y_1]$ are real, and $\delta_p$ has either a cusp when $p\in\Tan_{\RR}(\nu_{d+1})\setminus\nu_{d+1}$ and $[x_0,y_0]=[x_1,y_1]$, or a crunode when $p\in\Sec_{\RR}(\nu_{d+1})\setminus\Tan_{\RR}(\nu_{d+1})$ and $[x_0,y_0]$ and $[x_1,y_1]$ are distinct roots of the real binary quadratic form $c_0x^2+2c_1xy+c_2y^2$.
If $p\in\Sec_{\CC}(\nu_{d+1})\setminus\Sec_{\RR}(\nu_{d+1})\cap\RR\PP^{d+1}$ then the quadratic form has conjugate roots $[x_0,y_0]=[\overline{x_1},\overline{y_1}]$ and $\delta_p$ has an acnode.

If $p\notin\Sec(\nu_{d+1})$, then $\delta_p$ is a smooth curve of degree $d+1$.
It follows that $\delta_p$ is singular if and only if $p\in\Sec(\nu_{d+1})\setminus{\nu_{d+1}}$.
For the purposes of this paper, we make the following definitions.
\begin{definition}
A \emph{rational singular curve} is an irreducible non-degenerate singular rational curve of degree $d+1$ in $\CC\PP^d$.
In the real case, a \emph{rational cuspidal curve}, \emph{rational crunodal curve}, or \emph{rational acnodal curve} is a rational singular curve isomorphic to a singular planar cubic with a cusp, crunode, or acnode respectively.
\end{definition}
In particular, we have shown the case $k=2$ of the following well-known result.

\begin{prop}[\cite{H92}*{Proposition~9.7}]\label{prop:secant}
Let $d\ge 3$.
For any $k=2,\dots,d-1$, the secant variety of $\nu_{d+1}$ is equal to the locus of all $[p_0, p_1,\dots,p_{d+1}]$ such that $M_{k}(p)$ has rank at most~$2$.
\end{prop}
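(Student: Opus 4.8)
The plan is to show that the locus $\{p:\rank M_k(p)\le 2\}$ does not in fact depend on $k$ for $2\le k\le d-1$, so that the case $k=2$ already settled above determines all of them. The inclusion $\Sec_\CC(\nu_{d+1})\subseteq\{p:\rank M_k(p)\le 2\}$ admits a transparent direct proof; for the reverse inclusion the natural tool is the catalecticant of the fundamental binary form $f_p$ together with apolarity, which sidesteps the degenerate cases that make an elementary minor- or recurrence-based argument delicate.

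For the easy inclusion, recall that the $i$-th coordinate of $\nu_{d+1}[x,y]$ is $(-x)^iy^{d+1-i}$, so a point on the proper secant through $\nu_{d+1}[x_0,y_0]$ and $\nu_{d+1}[x_1,y_1]$ has coordinates $p_i=\lambda(-x_0)^iy_0^{d+1-i}+\mu(-x_1)^iy_1^{d+1-i}$. Writing $d+1-i-j=(k-i)+(d-k+1-j)$, where both summands are nonnegative for $0\le i\le k$ and $0\le j\le d-k+1$, the entry $M_k(p)_{i,j}=p_{i+j}$ factors as
\[ M_k(p)_{i,j}=\lambda\big[(-x_0)^iy_0^{k-i}\big]\big[(-x_0)^jy_0^{d-k+1-j}\big]+\mu\big[(-x_1)^iy_1^{k-i}\big]\big[(-x_1)^jy_1^{d-k+1-j}\big], \]
exhibiting $M_k(p)$ as a sum of two rank-one matrices, so $\rank M_k(p)\le 2$. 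Since $\{p:\rank M_k(p)\le 2\}$ is Zariski closed, being cut out by the $3\times 3$ minors of $M_k$, and the proper secants are dense in $\Sec_\CC(\nu_{d+1})$, this inclusion extends to the entire secant variety, covering the tangent lines as well.

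For the reverse inclusion I would pass to the fundamental binary form $f_p$, noting that $M_k(p)$ is exactly its $k$-th catalecticant; hence $\rank M_k(p)$ equals the value at $k$ of the Hilbert function of the apolar algebra $S/f_p^\perp$, where $S=\CC[u,v]$. For a binary form of degree $d+1$ the apolar ideal $f_p^\perp$ is a complete intersection of two forms of degrees $e_1\le e_2$ with $e_1+e_2=d+3$, so its Hilbert function is the symmetric unimodal sequence $k\mapsto\min(k+1,e_1,d+2-k)$. Therefore, for every $k$ with $2\le k\le d-1$,
\[ \rank M_k(p)=\min(k+1,\,e_1,\,d+2-k), \]
and since $k+1\ge 3$ and $d+2-k\ge 3$ throughout this range, the rank is at most $2$ if and only if $e_1\le 2$ — a condition independent of $k$. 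Consequently $\{p:\rank M_k(p)\le 2\}$ is one and the same set for all $k=2,\dots,d-1$, and the case $k=2$ proved above identifies this set with $\Sec_\CC(\nu_{d+1})$.

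The main obstacle is precisely the reverse inclusion. A naive minor-by-minor comparison fails, because not every $3\times 3$ minor of $M_2$ occurs among the minors of $M_k$ once $k>2$, so the vanishing of the $3\times 3$ minors of $M_k$ does not combinatorially entail that of $M_2$; and the alternative route, extracting a three-term linear recurrence satisfied by $(p_i)$, breaks down in the degenerate configurations — a point of $\nu_{d+1}$, a tangent line, or a secant meeting $\nu_{d+1}$ at a parameter with $y_s=0$. The virtue of the apolarity computation is that it absorbs all of these uniformly: the degenerate cases are exactly $e_1=1$ (a point of $\nu_{d+1}$, where every $M_k$ has rank $1$) and $e_1=2$ with a degenerate quadratic generator, and both are already encoded in the Hilbert function. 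What remains to be pinned down carefully are the two classical inputs — that $M_k(p)$ is the catalecticant of $f_p$, and that the apolar ideal of a binary form is a complete intersection of the stated generator degrees — both standard facts of the theory going back to Sylvester (see \cite{D03} and \cite{H92}), and both consistent with the node/cusp/acnode analysis already carried out for $k=2$.
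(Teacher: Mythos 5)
Your proposal is correct, but it takes a genuinely different route from the paper. The paper only proves the case $k=2$ directly (via the vanishing of the coefficients of $F_p$ and the factorisation of the resulting binary quadratic form) and then cites Harris for general $k$; you instead supply a full proof. Your forward inclusion --- writing $M_k(p)$ as a sum of two rank-one outer products for $p$ on a proper secant and then invoking Zariski closedness of the rank-$\le 2$ locus --- is the matrix-level analogue of what the paper does for $k=2$. Your reverse inclusion is the genuinely new ingredient: identifying $M_k(p)$ with the $k$-th catalecticant of $f_p$ and reading off $\rank M_k(p)=\min(k+1,e_1,d+2-k)$ from the Hilbert function of the apolar complete intersection shows at once that the rank-$\le 2$ locus is independent of $k$ for $2\le k\le d-1$ (precisely because $k+1\ge 3$ and $d+2-k\ge 3$ there, which explains the range in the statement), so everything reduces to the $k=2$ case the paper has already established. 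This buys a uniform treatment of the degenerate configurations (points of $\nu_{d+1}$, tangent lines, parameters with $y_s=0$) that a minor-by-minor or recurrence argument would have to handle separately, at the cost of importing two standard facts from apolarity theory; both are in \cite{IK99}*{Section~1.3}, which the paper already cites in connection with Theorem~\ref{thm:sylvester}, so the dependence is consistent with the paper's toolkit. Two small points to tighten: the Hilbert function of the complete intersection is $\min(k+1,e_1,e_2,d+2-k)$, and you should note that the term $e_2$ is redundant because $e_1\le e_2$; and you should state explicitly that $\rank M_k(p)$ is unchanged by the binomial rescaling of rows and columns relating $M_k(p)$ to the catalecticant in its usual normalisation.
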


\begin{corollary}\label{lem:first_species}
Let $d\ge 3$.
For any $k=2,\dots,d-1$ and $p\in\CC\PP^{d+1}\setminus\nu_{d+1}$, the curve $\delta_p$ of degree $d+1$ in $\CC\PP^d$ is singular if and only if $\rank M_{k}(p) \le 2$.
\end{corollary}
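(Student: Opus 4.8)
The plan is to combine the singularity criterion established in the discussion preceding the statement with Proposition~\ref{prop:secant}, since almost all of the work has already been done. Recall that $\delta_p$ has been shown to be singular precisely when $p\in\Sec_\CC(\nu_{d+1})\setminus\nu_{d+1}$: a double point $\delta_p[x_0,y_0]=\delta_p[x_1,y_1]$ of $\delta_p$ corresponds exactly to $p$ lying on a secant (or tangent) line of $\nu_{d+1}$, and conversely a point $p\notin\Sec_\CC(\nu_{d+1})$ yields a smooth $\delta_p$. So the corollary amounts to translating this set-membership statement into the determinantal form supplied by the proposition.

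First I would invoke Proposition~\ref{prop:secant} to rewrite membership in the secant variety as a rank condition: for every $k=2,\dots,d-1$ one has $p\in\Sec_\CC(\nu_{d+1})$ if and only if $\rank M_k(p)\le 2$. In particular, the rank conditions for the various admissible $k$ all cut out the same locus, so it suffices to prove the stated equivalence for one (arbitrary) such $k$, and the case $k=2$ has in any event already been verified by hand above.

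Since the corollary restricts to $p\in\CC\PP^{d+1}\setminus\nu_{d+1}$, the condition $p\in\Sec_\CC(\nu_{d+1})\setminus\nu_{d+1}$ simplifies to $p\in\Sec_\CC(\nu_{d+1})$. Chaining the two equivalences then gives, for $p\notin\nu_{d+1}$,
\[
\delta_p\text{ singular}\iff p\in\Sec_\CC(\nu_{d+1})\iff \rank M_k(p)\le 2,
\]
which is exactly the claim.

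The only point requiring a moment's care --- and the nearest thing to an obstacle --- is that ``singular'' must be read as ``possessing a singular point of any kind.'' The preceding analysis splits $\Sec_\CC(\nu_{d+1})\setminus\nu_{d+1}$ into points giving a node (crunode or acnode) and points giving a cusp, but each of these produces a genuine singularity of $\delta_p$, so no case is lost: the dichotomy smooth versus singular matches exactly the trichotomy $p\notin\Sec_\CC(\nu_{d+1})$ versus $p\in\Sec_\CC(\nu_{d+1})\setminus\nu_{d+1}$, with $p\in\nu_{d+1}$ excluded by hypothesis. Hence the equivalence holds uniformly in $k$.
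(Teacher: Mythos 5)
Your proposal is correct and follows exactly the route the paper intends: the corollary is stated without a separate proof precisely because it is the immediate combination of the preceding discussion (which establishes that $\delta_p$ is singular if and only if $p\in\Sec_{\CC}(\nu_{d+1})\setminus\nu_{d+1}$, with the $k=2$ rank condition worked out by hand) with Proposition~\ref{prop:secant}, which extends the rank characterisation of $\Sec_{\CC}(\nu_{d+1})$ to all $k=2,\dots,d-1$. Your remark that both nodes and cusps count as genuine singularities, so that the smooth/singular dichotomy matches the secant-variety membership exactly, is the right point of care and is consistent with the paper's case analysis.
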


We next use Corollary~\ref{lem:first_species} to show that the projection of a smooth rational curve of degree $d+1$ in $\CC\PP^d$ from a generic point on the curve is again smooth when $d\ge 4$.
This is not true for $d=3$, as there is a trisecant through each point of a quartic curve of the second species in $3$-space.
(The union of the trisecants form the unique quadric on which the curve lies \cite{H92}*{Exercise 8.13}.)

\begin{lemma}\label{lemma:singular_projection}
Let $\delta_p$ be a smooth rational curve of degree $d+1$ in $\CC\PP^d$, $d\ge 4$.
Then for all but at most three points $q\in\delta_p$, the projection $\overline{\pi_q(\delta_p\setminus\{q\})}$ is a smooth rational curve of degree $d$ in $\CC\PP^{d-1}$.
\end{lemma}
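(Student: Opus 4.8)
The plan is to realise the projection from a point of $\delta_p$ as a two-step projection of the rational normal curve $\nu_{d+1}$, and then reduce smoothness to a membership question via Corollary~\ref{lem:first_species}. Write $\delta_p=\pi_p(\nu_{d+1})$ with $p\notin\Sec_{\CC}(\nu_{d+1})$ (this holds because $\delta_p$ is smooth), and let $q=\delta_p[a,b]$ with preimage $\tilde q=\nu_{d+1}[a,b]$. First I would project $\nu_{d+1}$ from $\tilde q\in\nu_{d+1}$, obtaining the rational normal curve $\nu_d\subset\CC\PP^d$ and the image point $p':=\pi_{\tilde q}(p)$. Since $p\notin\Sec_{\CC}(\nu_{d+1})$, the line $\overline{p\tilde q}$ meets $\nu_{d+1}$ only at $\tilde q$, so $p'\notin\nu_d$ and $\overline{\pi_q(\delta_p\setminus\{q\})}=\delta_{p'}$ is a curve of degree $d$ in $\CC\PP^{d-1}$. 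By Corollary~\ref{lem:first_species} applied to $\nu_d$, this curve is smooth, hence a smooth rational curve of degree $d$, precisely when $p'\notin\Sec_{\CC}(\nu_d)$.

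Next I would transport the bad condition $p'\in\Sec_{\CC}(\nu_d)$ back upstairs. A secant or tangent line of $\nu_d=\pi_{\tilde q}(\nu_{d+1})$ is the image under $\pi_{\tilde q}$ of a $2$-plane through $\tilde q$ spanned by $\tilde q$ and two further (possibly coincident) points of $\nu_{d+1}$, and $p'$ lies on such a line if and only if $p$ lies on the corresponding $2$-plane. Thus $p'\in\Sec_{\CC}(\nu_d)$ is equivalent to $p$ lying on a trisecant $2$-plane of $\nu_{d+1}$ through $\tilde q$; equivalently, $f_p$ is a linear combination of three powers $\ell^{\,d+1}$ of linear forms, one of which corresponds to $\tilde q$. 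Extending the $k=2$ computation preceding Proposition~\ref{prop:secant} from the quadratic matrix $M_2(p)$ to the cubic catalecticant $M_3(p)$, this is in turn equivalent to the existence of a binary cubic in the apolar ideal $f_p^{\perp}$ having $[a,b]$ as a root.

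Finally I would count. As $\delta_p$ is smooth, $p\notin\Sec_{\CC}(\nu_{d+1})$, so $\rank M_2(p)\ge 3$ by Proposition~\ref{prop:secant}, and $f_p$ has no apolar form of degree at most $2$. By the complete-intersection description of the apolar ideal of a binary form (the content of Sylvester's theorem developed in this section), $f_p^{\perp}$ is generated in degrees $\alpha\le\beta$ with $\alpha+\beta=d+3$ and $\alpha\ge 3$, whence $\beta=d\ge 4>3$. Therefore the degree-$3$ part $(f_p^{\perp})_3$ is at most one-dimensional: it is zero when $\alpha>3$, and spanned by the single degree-$3$ generator when $\alpha=3$. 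A single binary cubic has at most three roots, so at most three values of $[a,b]$, hence at most three points $q\in\delta_p$, are bad, and for every other $q$ the projection is a smooth rational curve of degree $d$. The crux of the argument is exactly this dimension count, and it is here that the hypothesis $d\ge 4$ is essential: for $d=3$ one would have $\alpha=\beta=3$, so $(f_p^{\perp})_3$ would be a two-dimensional pencil of cubics whose roots fill $\CC\PP^1$, making every point of $\delta_p$ bad and matching the failure of the statement in $3$-space noted above.
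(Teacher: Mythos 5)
Your proof is correct, and it reaches the same numerical crux as the paper --- the bad parameters $[a,b]$ are precisely roots of an essentially unique binary cubic apolar to $f_p$ --- but by a different route. The paper argues directly on $\delta_p$: if $\overline{\pi_q(\delta_p\setminus\{q\})}$ is singular, then $q$ lies on a line meeting $\delta_p$ in three points, so by Lemma~\ref{lem:cohyperplane} the form $F_p$ vanishes identically in the remaining $d-2$ variable pairs, which exhibits the cubic $(x_0x+y_0y)(x_1x+y_1y)(x_2x+y_2y)$ in the left kernel of $M_3(p)$; smoothness gives $\rank M_3(p)\ge 3$ via Corollary~\ref{lem:first_species} with $k=3$, so that kernel is at most one-dimensional and the cubic has at most three roots. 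You instead factor $\pi_q\circ\pi_p$ through the rational normal curve $\nu_d$, characterise the bad $q$ by $\pi_{\tilde q}(p)\in\Sec_{\CC}(\nu_d)$, and translate this into apolarity; the two translations are equivalent, since the left kernel of $M_3(p)$ \emph{is} $(f_p^{\perp})_3$ up to the binomial rescaling of coordinates, and your identification of where $d\ge4$ enters (and why $d=3$ fails) is exactly right. Two small remarks. First, your counting step imports the complete-intersection structure of the apolar ideal of a binary form, which the paper never states (its Theorem~\ref{thm:sylvester} is a consequence of that structure, not the statement itself); the fact is classical and is in the cited Iarrobino--Kanev reference, but you could bypass it entirely by applying Corollary~\ref{lem:first_species} with $k=3$ instead of $k=2$, which gives $\dim(f_p^{\perp})_3\le 4-\rank M_3(p)\le 1$ in one line. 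Second, ``whence $\beta=d$'' only holds when $\alpha=3$; what you actually need is $\beta\ge(d+3)/2>3$, which is what forces the degree-$3$ graded piece to be at most one-dimensional in either case --- your following sentence handles both cases correctly, so this is only a slip of phrasing, not a gap.
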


\begin{proof}
Let $q=\delta_p[x_0,y_0]$.
Suppose that $\overline{\pi_q(\delta_p\setminus\{q\})}$ is singular.
Then there exist $[x_1,y_1]$ and $[x_2,y_2]$ such that $\pi_q(\delta_p[x_1,y_1])=\pi_q(\delta_p[x_2,y_2])$
and the points $\delta_p[x_0,y_0]$, $\delta_p[x_1,y_1]$, and $\delta_p[x_2,y_2]$ are collinear.
Then for arbitrary $[x_3,y_3],\dots,[x_d,y_d]\in\CC\PP^1$, the points $\delta_p[x_i,y_i]$, $i=0,\dots,d$ are on a hyperplane, so by Lemma~\ref{lem:cohyperplane}, $F_p(x_0,y_0,\dots,x_d,y_d)$ is identically $0$ as a polynomial in $x_3,y_3,\dots,x_d,y_d$.
The coefficients of this polynomial are of the form
\[ p_i x_0x_1x_2 + p_{i+1}(x_0x_1y_2+x_0y_1x_2+y_0x_1x_2) + p_{i+2}(x_0y_1y_2+y_0x_1y_2+y_0y_1x_2) + p_{i+3} y_0y_1y_2 \]
for $i=0,\dots,d-2$.
This means that the linear system
$[c_0, 3c_1, 3c_2, c_3] M_3(p) = 0$ 
has a non-trivial solution $c_0=x_0x_1x_2$, $3c_1=x_0x_1y_2+x_0y_1x_2+y_0x_1x_2$, $3c_2=x_0y_1y_2+y_0x_1y_2+y_0y_1x_2$, $c_3=y_0y_1y_2$.
The binary cubic form $c_0 x^3 + 3c_1 x^2y + c_2 xy^2 + c_3 y^3$ then has the factorisation $(x_0x+y_0y)(x_1x+y_1y)(x_2x+y_2y)$,
hence its roots give the collinear points on $\delta_p$.
Since $\delta_p$ is smooth, $M_3(p)$ has rank at least $3$ by Corollary~\ref{lem:first_species}, and so the cubic form is unique up to scalar multiples.
It follows that there are at most three points $q$ such that the projection $\overline{\pi_q(\delta_p\setminus\{q\})}$ is not smooth.
\end{proof}

We need the following theorem on the fundamental binary form $f_p$ that is essentially due to Sylvester \cite{S51} to determine the natural group structure on rational singular curves.
Reznick \cite{Rez2013} gives an elementary proof of the generic case where $p$ does not lie on the tangent variety.
(See also Kanev \cite{K99}*{Lemma~3.1} and Iarrobino and Kanev \cite{IK99}*{Section~1.3}.)
We provide a very elementary proof that includes the non-generic case.

\begin{theorem}[Sylvester \cite{S51}]\label{thm:sylvester}
Let $d\ge 2$.
\begin{enumerate}[label=\rom]
\item \label{sylvester1}
If $p\in\Tan_{\CC}(\nu_{d+1})$, then there exist binary linear forms $L_1,L_2$ such that $f_p(x,y)=L_1(x,y)^dL_2(x,y)$.
Moreover, if $p \notin \nu_{d+1}$ then $L_1$ and $L_2$ are linearly independent,
and if $p\in\RR\PP^{d+1}$ then $L_1$ and $L_2$ are both real.
\item\label{sylvester2}
If $p\in\Sec_{\CC}(\nu_{d+1})\setminus\Tan_{\CC}(\nu_{d+1})$, then there exist linearly independent binary linear forms $L_1, L_2$ such that $f_p(x,y) = L_1(x,y)^{d+1} - L_2(x,y)^{d+1}$.
Moreover, if $p\in\RR\PP^{d+1}\setminus\Sec_{\RR}(\nu_{d+1})$ then $L_1$ and $L_2$ are complex conjugates, while if $p\in\Sec_{\RR}(\nu_{d+1})$ then there exist linearly independent real binary linear forms $L_1, L_2$ such that $f_p(x,y) = L_1(x,y)^{d+1} \pm L_2(x,y)^{d+1}$, where we can always choose the lower sign when $d$ is even, and otherwise depends on $p$.
\end{enumerate}
\end{theorem}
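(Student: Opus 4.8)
The plan is to exploit the fact that the linear map $p\mapsto f_p$ identifies $\CC\PP^{d+1}$ with the space of binary forms of degree $d+1$ in such a way that the rational normal curve $\nu_{d+1}$ is carried to the locus of perfect $(d+1)$-st powers of linear forms. The first step is the direct computation that for a point $\nu_{d+1}[a,b]$ on the curve the fundamental binary form is a pure power:
\[ f_{\nu_{d+1}[a,b]}(x,y)=\sum_{i=0}^{d+1}(-1)^i a^i b^{d+1-i}\binom{d+1}{i}x^{d+1-i}y^i=(bx-ay)^{d+1}. \]
Since $f_p=\sum_i p_i\binom{d+1}{i}x^{d+1-i}y^i$ depends linearly on the coordinates of $p$ and $\binom{d+1}{i}\neq 0$, the map $p\mapsto f_p$ is a linear isomorphism onto the $(d+1)$-dimensional projective space of binary $(d+1)$-forms, under which $\nu_{d+1}$ maps to $\{L^{d+1}:L\text{ a binary linear form}\}$. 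Consequently a line that is secant to $\nu_{d+1}$ at $\nu_{d+1}[a_1,b_1]$ and $\nu_{d+1}[a_2,b_2]$ is carried to the line spanned by $(b_1x-a_1y)^{d+1}$ and $(b_2x-a_2y)^{d+1}$, and a tangent line of $\nu_{d+1}$ to the corresponding tangent line of this power locus.

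Next I would read off the two cases. If $p\in\Sec_{\CC}(\nu_{d+1})\setminus\Tan_{\CC}(\nu_{d+1})$, then $p$ lies on a genuine secant meeting $\nu_{d+1}$ in two distinct points, so by linearity $f_p=\lambda(b_1x-a_1y)^{d+1}+\mu(b_2x-a_2y)^{d+1}$ with $\lambda,\mu\in\CC$; here $\lambda,\mu\neq 0$ precisely because $p$ is not itself on the curve. Absorbing the scalars into the linear factors, namely setting $L_1=\lambda^{1/(d+1)}(b_1x-a_1y)$ and $L_2=(-\mu)^{1/(d+1)}(b_2x-a_2y)$, gives $f_p=L_1^{d+1}-L_2^{d+1}$, and $L_1,L_2$ are linearly independent because $[a_1,b_1]\neq[a_2,b_2]$. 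If instead $p\in\Tan_{\CC}(\nu_{d+1})$, then $f_p$ lies on the tangent line to the power locus at $(bx-ay)^{d+1}$; differentiating the parametrisation $L\mapsto L^{d+1}$ shows this tangent line is exactly $\{(bx-ay)^d M:M\text{ linear}\}$, whence $f_p=L_1^dL_2$ with $L_1=bx-ay$. The same expression arises as the limit of the secant representation as the two secant points coalesce. The forms $L_1,L_2$ are independent exactly when $M$ is not a multiple of $bx-ay$, which fails only when $f_p$ is a pure power, that is, when $p\in\nu_{d+1}$.

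Finally I would settle the reality assertions using the quadratic form already attached to $p$ in the discussion of $M_2(p)$: for $p\in\RR\PP^{d+1}$ the matrix $M_2(p)$ is real, so its one-dimensional null space (when it has rank $2$) yields a real binary quadratic $c_0x^2+2c_1xy+c_2y^2$ whose roots are the two points where the secant through $p$ meets $\nu_{d+1}$. When $p\in\Sec_{\RR}(\nu_{d+1})$ these roots are real and distinct, so $b_1x-a_1y$ and $b_2x-a_2y$ are real and the scalars $\lambda,\mu$ are then real; choosing real $(d+1)$-st roots gives real $L_1,L_2$ with $f_p=L_1^{d+1}\pm L_2^{d+1}$, where for $d$ even the odd exponent $d+1$ lets us flip a sign by $L\mapsto -L$ and hence always realise the lower sign, while for $d$ odd the sign is forced by the signs of $\lambda,\mu$ and thus depends on $p$. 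When $p\in\RR\PP^{d+1}\setminus\Sec_{\RR}(\nu_{d+1})$ the real quadratic has a conjugate pair of roots, so $b_2x-a_2y=\overline{b_1x-a_1y}$ and reality of $f_p$ forces $\mu=\overline{\lambda}$, giving $L_2=\overline{L_1}$ after the appropriate choice of root, absorbing the harmless overall phase arising from the minus sign in $L_1^{d+1}-L_2^{d+1}$. In the tangent case the double root of the real quadratic is real, so $L_1=bx-ay$ is real and $L_2=f_p/L_1^d$ is then real as a quotient of real forms.

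I expect the main obstacle to be the degenerate (tangent) case: one must justify that $f_p$ really has the shape $L_1^dL_2$ rather than merely lying in the closure of the secant representations, which is where the explicit description of the tangent line to the power locus, or a careful limiting argument balancing the blowing-up coefficient $\mu\to\infty$, is needed. The remaining delicacy is purely in the reality bookkeeping, namely tracking signs and $(d+1)$-st roots of unity so that the real or conjugate structure of the two secant endpoints passes correctly to $L_1$ and $L_2$, including the parity-of-$d$ dichotomy for the $\pm$ sign.
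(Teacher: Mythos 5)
Your proposal is correct, and for part \ref{sylvester2} it is essentially the paper's argument: both write $p=\mu_1\nu_{d+1}[\alpha_1,\alpha_2]+\mu_2\nu_{d+1}[\beta_1,\beta_2]$ and use the binomial identity $f_{\nu_{d+1}[a,b]}=(bx-ay)^{d+1}$ together with linearity of $p\mapsto f_p$ to get a two-term power sum, then absorb the scalars; the reality bookkeeping (conjugate endpoints when $p\notin\Sec_{\RR}$, the parity-of-$d$ dichotomy for the sign, the harmless unit phase) is handled to the same standard in both. Where you genuinely diverge is part \ref{sylvester1}. The paper proves the tangent case by brute force: it writes down the tangency condition $p-p_*=\lambda\nu_{d+1}'(\alpha)$ in an affine chart, solves explicitly for $\beta$ so that the coefficients of $(\alpha_2x-\alpha_1y)^d(\beta_2x-\beta_1y)$ match those of $f_p$, and treats the degenerate charts $\alpha_1=0$ and $p_{d+1}=0$ separately. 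You instead observe once and for all that $p\mapsto f_p$ is a linear isomorphism carrying $\nu_{d+1}$ onto the locus of perfect $(d+1)$-st powers, and that the tangent line to that locus at $L^{d+1}$ is exactly $\{L^dM\}$ (since it is spanned by $L^{d+1}$ and the derivative $(d+1)L^dM_0$, and $sL+tM_0$ sweeps out all linear forms). This eliminates the coefficient-matching and the chart case analysis, and it makes the linear-independence criterion ($L_2$ proportional to $L_1$ iff $f_p$ is a pure power iff $p\in\nu_{d+1}$) transparent. The trade-off is that your reality argument for \ref{sylvester1} leans on the earlier $M_2(p)$ discussion (real null vector, real double root), whereas the paper gets reality for free by running its explicit computation uniformly over $\FF\in\{\RR,\CC\}$; both are legitimate, since the $M_2(p)$ material precedes the theorem in the paper.
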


\begin{proof}
\ref{sylvester1}:
We work over $\FF\in\{\RR,\CC\}$.
Let $p = [p_0,p_1,\dots,p_{d+1}]\in \Tan_\FF(\nu_{d+1})$.
Let $p_*=\nu_{d+1}[\alpha_1,\alpha_2]$ be the point on $\nu_{d+1}$ such that the line $pp_*$ is tangent to $\nu_{d+1}$ (if $p\in\nu_{d+1}$, we let $p_*=p$).
We will show that
\begin{equation}\label{tangent}
f_p(x,y) = \sum_{i=0}^{d+1} p_i\binom{d+1}{i}x^{d+1-i}y^i = (\alpha_2 x - \alpha_1 y)^d(\beta_2 x-\beta_1 y)
\end{equation}
for some $[\beta_1,\beta_2]\in\FF\PP^1$.

First consider the special case $\alpha_1=0$.
Then $p_*=[1,0,\dots,0]$ and the tangent to $\nu_{d+1}$ at $p_*$ is the line $x_2=x_3=\dots=x_{d+1}=0$.
It follows that $f_p(x,y)= p_0x^{d+1} + p_1(d+1)x^dy = (1x-0y)^d(p_0x+p_1(d+1)y)$.
If $p_1 = 0$, then $p = p_* \in \nu_{d+1}$.
Thus, if $p \notin \nu_{d+1}$, then $p_1 \ne 0$, and $x$ and $p_0x+p_1(d+1)y$ are linearly independent.

We next consider the general case $\alpha_1\neq 0$.
Equating coefficients in \eqref{tangent}, we see that we need to find $[\beta_1,\beta_2]$ such that
\[ p_i\binom{d+1}{i} = \binom{d}{i}\alpha_2^{d-i}(-\alpha_1)^i\beta_2-\binom{d}{i-1}\alpha_2^{d-i+1}(-\alpha_1)^{i-1}\beta_1 \]
for each $i=0,\dots,d+1$, where we use the convention $\binom{d}{-1}=\binom{d}{d+1}=0$.
This can be simplified to
\begin{equation}\label{tangent2}
p_i = \left(1-\frac{i}{d+1}\right)\alpha_2^{d-i}(-\alpha_1)^i\beta_2 - \frac{i}{d+1}\alpha_2^{d-i+1}(-\alpha_1)^{i-1}\beta_1.
\end{equation}
Since we are working projectively, we can fix the value of $\beta_1$ from the instance $i=d+1$ of~\eqref{tangent2} to get
\begin{equation}\label{tangent3}
p_{d+1} = -(-\alpha_1)^d\beta_1.
\end{equation}

If $p_{d+1}\neq 0$, we can divide \eqref{tangent2} by \eqref{tangent3}. 
After setting $\alpha=\alpha_2/\alpha_1$, $\beta=\beta_2/\beta_1$, and $a_i=p_i/p_{d+1}$, we then have to show that for some $\beta\in\FF$,
\begin{equation}\label{eqn:*}
a_i = -\left(1-\frac{i}{d+1}\right)(-\alpha)^{d-i}\beta+\frac{i}{d+1}(-\alpha)^{d-i+1}
\end{equation}
for each $i=0,\dots,d$.
We next calculate in the affine chart $x_{d+1}=1$ where the rational normal curve becomes $\nu_{d+1}(t) = ((-t)^{d+1},(-t)^d,\dots,-t)$, $p=(a_0,\dots,a_d)$, and $p_*=\nu_{d+1}(\alpha)$.
The tangency condition means that $p_*-p$ is a scalar multiple of 
\[\nu_{d+1}'(\alpha) = ((d+1)(-\alpha)^d,d(-\alpha)^{d-1},\dots,2\alpha,-1), \] 
that is, we have for some $\lambda\in\FF$ that
$(-\alpha)^{d+1-i}-a_i = \lambda(d+1-i)(-\alpha)^{d-i}$ for all $i=0,\dots,d$.
Set $\beta=\alpha+\lambda(d+1)$.
Then $(-\alpha)^{d+1-i}-a_i = (\beta-\alpha)(1-\frac{i}{d+1})(-\alpha)^{d-i}$, and we have
\begin{align*}
a_i &= (-\alpha)^{d+1-i}-(\beta-\alpha)\left(1-\frac{i}{d+1}\right)(-\alpha)^{d-i} \\
&= -\left(1-\frac{i}{d+1}\right)(-\alpha)^{d-i}\beta+\frac{i}{d+1}(-\alpha)^{d-i+1},
\end{align*}
giving \eqref{eqn:*} as required.
If $\alpha = \beta$, then $\lambda = 0$ and $p = p_* \in \nu_{d+1}$.
Thus, if $p \notin \nu_{d+1}$, then $\alpha \ne \beta$, and $\alpha_2 x - \alpha_1 y$ and $\beta_2 x - \beta_1 y$ are linearly independent.

We still have to consider the case $p_{d+1}=0$.
Then $\beta_1=0$ and we need to find $\beta_2$ such that
\begin{equation}\label{eqn:**}
p_i = \left(1-\frac{i}{d+1}\right)\alpha_2^{d-i}(-\alpha_1)^i\beta_2
\end{equation}
for all $i=0,\dots,d$.
Since $p_{d+1}=0$, we have that $\nu_{d+1}'(\alpha)$ is parallel to $(p_0,\dots,p_d)$, that is,
\[ p_i = \lambda(d+1-i)(-\alpha)^{d-i}\]
for some $\lambda\in\FF^*$.
Set $\beta_2 = \lambda(d+1)/(-\alpha_1)^d$.
Then
\begin{equation*}
p_i = \frac{(-\alpha_1)^d\beta_2}{d+1}(d+1-i)\left(\frac{\alpha_2}{-\alpha_1}\right)^{d-i}
= \left(1-\frac{i}{d+1}\right)\alpha_2^{d-i}(-\alpha_1)^i\beta_2,
\end{equation*}
again giving \eqref{eqn:**} as required.
Note that since $\alpha_1 \ne 0$ but $\beta_1 = 0$, $\alpha_2 x - \alpha_1 y$ and $\beta_2 x - \beta_1 y$ are linearly independent.
Note also that since $\lambda \ne 0$, we have $\beta_2 \ne 0$ and $p \ne [1,0,\dotsc,0]$, hence $p \notin \nu_{d+1}$.

\ref{sylvester2}:
Let $p=[p_0,\dots,p_{d+1}] \in \Sec_{\CC}(\nu_{d+1})\setminus\Tan_{\CC}(\nu_{d+1})$, and suppose that $p$ lies on the secant line through the distinct points $p_1 := \nu_{d+1}[\alpha_1, \alpha_2]$ and $p_2 := \nu_{d+1}[\beta_1, \beta_2]$.
Since $p, p_1, p_2$ are distinct and collinear, there exist $\mu_1, \mu_2\in\CC^*$ such that $p = \mu_1 p_1 + \mu_2 p_2$.
This means that for $i = 0, \dotsc, d+1$, we have
\[ p_i = \mu_1(-\alpha_1)^i\alpha_2^{d+1-i} + \mu_2(-\beta_1)^i\beta_2^{d+1-i}. \]
Then
\begin{align*}
f_p(x,y) &= \sum_{i=0}^{d+1} p_i \binom{d+1}{i} x^{d+1-i} y^{i} \\
&= \mu_1\sum_{i=0}^{d+1}\binom{d+1}{i}(\alpha_2 x)^{d+1-i}(-\alpha_1 y)^i + \mu_2 \sum_{i=0}^{d+1}\binom{d+1}{i}(\beta_2 x)^{d+1-i}(-\beta_1 y)^i \\
&= \mu_1(\alpha_2 x-\alpha_1 y)^{d+1} + \mu_2(\beta_2 x-\beta_1 y)^{d+1}\\
&= L_1(x,y)^{d+1}- L_2(x,y)^{d+1}
\end{align*}
where the linear forms $L_1,L_2$ are linearly independent.

If $p\in\RR\PP^{d+1}\setminus{\Sec_{\RR}(\nu_{d+1})}$, then $f_p$ is real and $p_1$ and $p_2$ are non-real points.
Taking conjugates, we have
\[ p = \overline{\mu_1}\nu_{d+1}[\overline{\alpha_1},\overline{\alpha_2}]+\overline{\mu_2}\nu_{d+1}[\overline{\beta_1},\overline{\beta_2}] \]
as vectors, and because of the uniqueness of secants of the rational normal curve through a given point, we obtain $\overline{\mu_1}=\mu_2$ and $\nu_{d+1}[\overline{\alpha_1},\overline{\alpha_2}] = \nu_{d+1}[\beta_1,\beta_2]$, hence $\overline{\alpha_1}=\beta_1$ and $\overline{\alpha_2}=\beta_2$.
It follows that $\overline{L_1(x,y)} = L_2(\overline{x},\overline{y})$.

If $p\in\Sec_{\RR}(\nu_{d+1})$, then $p_1$ and $p_2$ are real, so $[\mu_1,\mu_2],[\alpha_1,\alpha_2],[\beta_1,\beta_2]\in\RR\PP^1$, and we obtain $f_p(x,y)=L_1^{d+1}\pm L_2^{d+1}$ for some linearly independent $L_1,L_2$ over $\RR$, where the choice of sign depends on~$p$.
\end{proof}

We are now in a position to describe the group laws on rational singular curves.
We first note the effect of a change of coordinates on the parametrisation of $\delta_p$.
Let $\varphi\colon\FF\PP^1\to\FF\PP^1$ be a projective transformation. 
Then $\nu_{d+1}\circ\varphi$ is a reparametrisation of the rational normal curve.
It is not difficult to see that there exists a projective transformation $\psi\colon\FF\PP^{d+1}\to\FF\PP^{d+1}$ such that $\nu_{d+1}\circ\varphi = \psi\circ\nu_{d+1}$.
It follows that if we reparametrise $\delta_p$ using $\varphi$, we obtain
\[ \delta_p\circ\varphi = \pi_p\circ\nu_{d+1}\circ\varphi = \pi_p\circ\psi\circ\nu_{d+1} = \psi'\circ\pi_{\psi^{-1}(p)}\circ\nu_{d+1}\cong\delta_{\psi^{-1}(p)}, \]
where $\psi'\colon\FF\PP^d\to\FF\PP^d$ is an appropriate projective transformation such that first transforming $\FF\PP^{d+1}$ with $\psi$ and then projecting from $p$ is the same as projecting from $\psi^{-1}(p)$ and then transforming $\FF\PP^d$ with $\psi'$.
So by reparametrising $\delta_p$, we obtain $\delta_{p'}$ for some other point $p'$ that is in the orbit of $p$ under the action of projective transformations that fix $\nu_{d+1}$.

Since $\delta_p\circ\varphi[x_0,y_0],\dots,\delta_p\circ\varphi[x_d,y_d]$ lie on a hyperplane if and only if the $\delta_{\psi^{-1}(p)}[x_i,y_i]$'s
are on a hyperplane, it follows from Lemma~\ref{lem:cohyperplane} that $F_p(\varphi(x_0,y_0),\dots,\varphi(x_d,y_d))$ is a scalar multiple of $F_{\psi^{-1}(p)}(x_0,y_0,\dots,x_d,y_d)$, in which case $f_p\circ\varphi = f_{\psi^{-1}(p)}$ up to a scalar multiple.
Thus, we obtain the same reparametrisation of the fundamental binary form $f_p$.

\begin{prop}\label{prop:rational_group}
A rational singular curve $\delta_p$ in $\CC\PP^d$ has a natural group structure on its subset of smooth points $\delta_p^*$ such that $d+1$ points in $\delta_p^*$ lie on a hyperplane if and only if they sum to the identity.
This group is isomorphic to $(\CC,+)$ if the singularity of $\delta_p$ is a cusp and isomorphic to $(\CC^*,\cdot)$ if the singularity is a node.

If the curve is real and cuspidal or acnodal, then it has a group isomorphic to $(\RR,+)$ or $\RR/\ZZ$ depending on whether the singularity is a cusp or an acnode, such that $d+1$ points in $\delta_p^*$ lie on a hyperplane if and only if they sum to the identity.
If the curve is real and the singularity is a crunode, then the group is isomorphic to $(\RR,+) \times \ZZ_2$, but $d+1$ points in $\delta_p^*$ lie on a hyperplane if and only if they sum to $(0,0)$ or $(0,1)$, depending on $p$.
\end{prop}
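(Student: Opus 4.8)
The plan is to use Lemma~\ref{lem:cohyperplane} to turn the coplanarity condition into the vanishing of the polarised form $F_p$, and then to read the group law directly off the factorisations of $f_p$ supplied by Theorem~\ref{thm:sylvester}. Since the hyperplane condition on $\delta_p$ depends only on the parameters $[x_i,y_i]\in\CC\PP^1$, and since the singularity of $\delta_p$ is unique (as established earlier in Section~\ref{sec:curves}), the parametrisation restricts to a bijection from $\CC\PP^1$ minus the parameter(s) lying over the singularity onto the smooth locus $\delta_p^*$. It therefore suffices to put a group on the appropriate subset of $\CC\PP^1$ and transport it.

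For the cusp, $p\in\Tan_\CC(\nu_{d+1})\setminus\nu_{d+1}$, Theorem~\ref{thm:sylvester}\ref{sylvester1} gives $f_p=L_1^dL_2$ with $L_1,L_2$ linearly independent, where $L_1$ vanishes at the single parameter mapping to the cusp. I would verify that the symmetric multilinear form
\[ \frac{1}{d+1}\sum_{k=0}^{d}L_2(x_k,y_k)\prod_{i\ne k}L_1(x_i,y_i) \]
restitutes to $f_p$, hence equals $F_p$; for smooth points it vanishes iff $\sum_k L_2(x_k,y_k)/L_1(x_k,y_k)=0$, so $[x,y]\mapsto L_2(x,y)/L_1(x,y)$ is a bijection of $\delta_p^*$ onto $(\CC,+)$ taking coplanarity to summation to $0$. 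For the node, $p\in\Sec_\CC(\nu_{d+1})\setminus\Tan_\CC(\nu_{d+1})$, Theorem~\ref{thm:sylvester}\ref{sylvester2} gives $f_p=L_1^{d+1}-L_2^{d+1}$; since the polarisation of a pure power $L^{d+1}$ is $\prod_i L(x_i,y_i)$, we get
\[ F_p(x_0,y_0,\dots,x_d,y_d)=\prod_{i=0}^d L_1(x_i,y_i)-\prod_{i=0}^d L_2(x_i,y_i), \]
with $L_1,L_2$ vanishing at the two distinct parameters over the node, so $[x,y]\mapsto L_1(x,y)/L_2(x,y)$ is a bijection of $\delta_p^*$ onto $(\CC^*,\cdot)$ under which $F_p=0$ becomes $\prod_k L_1/L_2=1$.

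For the real statements I would restrict these maps to $\RR\PP^d$ and use the reality refinements of Theorem~\ref{thm:sylvester}. In the cuspidal case $L_1,L_2$ are real, so $L_2/L_1$ maps the real smooth points bijectively onto $(\RR,+)$. In the acnodal case $p\in\RR\PP^{d+1}\setminus\Sec_\RR(\nu_{d+1})$ we have $L_2=\overline{L_1}$, so on real parameters $L_1/L_2=(L_1/|L_1|)^2$ has modulus $1$; as $[x,y]$ traverses $\RR\PP^1$ once this squared ratio winds once around the unit circle, giving a bijection onto $\RR/\ZZ$ with product $1$ as the law. In the crunodal case $p\in\Sec_\RR(\nu_{d+1})$ we have $L_1,L_2$ real and $f_p=L_1^{d+1}\pm L_2^{d+1}$; then $L_1/L_2$ is a real bijection onto $\RR^*\cong(\RR,+)\times\ZZ_2$ via $t\mapsto(\log|t|,\operatorname{sgn}t)$, and $F_p=0$ reads $\prod_k L_1/L_2=\mp1$, which is summation to $(0,0)$ when the product is $+1$ and to $(0,1)$ when it is $-1$, the sign being determined by $p$.

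The main obstacle I anticipate is bookkeeping rather than any hard computation: one must match the vanishing loci of $L_1,L_2$ to the singular parameter(s) so that the proposed M\"obius maps have exactly $\delta_p^*$ as domain, confirm that each map is a bijection onto the stated group (in particular the winding argument in the acnodal case, showing the map onto the circle is one-to-one), and track the sign carefully in the crunodal case. Uniqueness of the singularity, proved earlier in Section~\ref{sec:curves}, is precisely what guarantees that the parametrisation restricts to a bijection on the smooth locus, and hence that the group law transports correctly from $\CC\PP^1$ to $\delta_p^*$.
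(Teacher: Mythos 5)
Your proposal is correct and follows essentially the same route as the paper: both reduce coplanarity to the vanishing of $F_p$ via Lemma~\ref{lem:cohyperplane}, invoke Theorem~\ref{thm:sylvester} to factor $f_p$, and read the group law off the resulting polarisation ($\sum L_2/L_1=0$ in the cuspidal case, $\prod L_1/L_2=1$ or $\pm 1$ in the nodal cases), using the uniqueness of the singularity to transport the group from $\FF\PP^1$ to $\delta_p^*$. The only cosmetic difference is that the paper first reparametrises so that $L_1,L_2$ take standard forms before polarising, whereas you polarise directly in terms of general $L_1,L_2$ using uniqueness of the polarisation; both are valid.
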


\begin{proof}
First suppose $\delta_p$ is cuspidal and $\FF\in\{\RR,\CC\}$, so that $p \in \Tan_{\FF}(\nu_{d+1}) \setminus{\nu_{d+1}}$.
By Theorem~\ref{thm:sylvester}, $f_p=L_1^dL_2$ for some linearly independent linear forms $L_1$ and $L_2$.
By choosing $\varphi$ appropriately, we may assume without loss of generality that $L_1(x,y)=x$ and $L_2(x,y)=(d+1)y$, so that $f_p(x,y)=(d+1)x^dy$ and $p=[0,1,0,\dots,0]$, with the cusp of $\delta_p$ at $\delta_p[0,1]$.
It follows that the polarisation of $f_p$ is $F_p(x_0,y_0,\dotsc,x_d,y_d)= P_1 = x_0x_1\dotsb x_d\sum_{i=0}^d y_i/x_i$.
For $[x_i,y_i]\neq[0,1]$, $i=0,\dots,d$, the points $\delta_p[x_i,y_i]$ are on a hyperplane if and only if $\sum_{i=0}^d y_i/x_i=0$.
Thus we identify $\delta_p[x,y]\in\delta_p^*$ with $y/x\in\FF$, and the group is $(\FF,+)$.

Next suppose $\delta_p$ is nodal, so that $p \in \Sec_{\CC}(\nu_{d+1})\setminus\Tan_{\CC}(\nu_{d+1})$.
By Theorem~\ref{thm:sylvester}, $f_p=L_1^{d+1}-L_2^{d+1}$ for some linearly independent linear forms $L_1$ and $L_2$.
Again by choosing $\varphi$ appropriately, we may assume without loss of generality that $L_1(x,y)=x$ and $L_2(x,y)=y$, so that $f_p(x,y)=x^{d+1}-y^{d+1}$ and $p=[1,0,\dots,0,-1]$, with the node of $\delta_p$ at $\delta_p[0,1]=\delta_p[1,0]$.
The polarisation of $f_p$ is $F_p(x_0,y_0,\dots,x_d,y_d)= P_0-P_{d+1} = x_0x_1\dotsb x_d - y_0y_1\dotsb y_d$.
Therefore, $\delta_p[x_i,y_i]$, $i=0,\dotsc,d$, are on a hyperplane if and only if $\prod_{i=0}^d y_i/x_i = 1$.
Thus we identify $\delta_p[x,y]\in\delta_p^*$ with $y/x\in\CC^*$, and the group is $(\CC^*,\cdot)$.

Now suppose $\delta_p$ is real and the node is an acnode.
Then the linearly independent linear forms $L_1$ and $L_2$ given by Theorem~\ref{thm:sylvester} are $L_1(x,y) = \alpha x+ \beta y$ and $L_2(x,y)=\overline{\alpha} x + \overline{\beta} y$ for some $\alpha,\beta\in\CC\setminus\RR$.
There exists $\varphi\colon\RR\PP^1\to\RR\PP^1$ such that $L_1\circ\varphi = x + iy$ and $L_2\circ\varphi = x-iy$, hence we may assume after such a reparametrisation that $f_p(x,y) = (x+iy)^{d+1} - (x-iy)^{d+1}$ and that the node is at $\delta_p[i,1]=\delta_p[-i,1]$.
The polarisation of $f_p$ is $F_p(x_0,y_0,\dots,x_d,y_d) = \prod_{j=0}^d(x_j+iy_j) - \prod_{j=0}^d(x_j-iy_j)$, and it follows that $\delta_p[x_0,y_0], \dotsc, \delta_p[x_d,y_d]$ are collinear if and only if $\prod_{j=0}^d\frac{x_j+iy_j}{x_j-iy_j} = 1$.
We now identify $\RR\PP^1$ with the circle $\RR/\ZZ \cong \setbuilder{z \in \CC}{|z|=1}$ using the M\"obius transformation $[x,y]\to \frac{x+iy}{x-iy}$.

It remains to consider the crunodal case.
Then, similar to the complex nodal case, we obtain after a reparametrisation that $\delta_p[x_i,y_i]$, $i = 0, \dotsc, d$, are on a hyperplane if and only if $\prod_{i=0}^d y_i/x_i = \pm 1$, where the sign depends on $p$.
Thus we identify $\delta_p[x,y]\in\delta_p^*$ with $y/x\in\RR^*$, and the group is $(\RR^*,\cdot)\cong\RR\times\ZZ_2$, where $\pm 1\in\RR^*$ corresponds to $(0,0),(0,1)\in\RR\times\ZZ_2$ respectively.
\end{proof}

The group on an elliptic normal curve or a rational singular curve of degree $d+1$ as described in Propositions~\ref{prop:elliptic_group} and \ref{prop:rational_group} is not uniquely determined by the property that $d+1$ points lie on a hyperplane if and only if they sum to some fixed element $c$.
Indeed, for any $t\in(\delta^*,\oplus)$, $x\boxplus y:= x\oplus y\oplus t$ defines another abelian group on $\delta^*$ with the property that $d+1$ points lie on a hyperplane if and only if they sum to $c\oplus dt$.
However, these two groups are isomorphic in a natural way with an isomorphism given by the translation map $x\mapsto x\ominus t$.
The next proposition show that we always get uniqueness up to some translation.
It will be used in Section~\ref{sec:extremal}.

\begin{prop}\label{prop:unique}
Let $(G,\oplus,0)$ and $(G,\boxplus,0')$ be abelian groups on the same ground set, such that for some $d\ge 2$ and some $c,c'\in G$,
\[ x_1\oplus\dotsb\oplus x_{d+1} =c \iff x_1\boxplus\dotsb\boxplus x_{d+1}=c'\quad\text{for all }x_1,\dots,x_{d+1}\in G.\]
Then $(G,\oplus,0) \to (G,\boxplus,0'), x\mapsto x\boxminus 0 = x \oplus 0'$ is an isomorphism, and 
\[ c'=c\boxplus \underbrace{0 \boxplus \dotsb \boxplus 0}_\text{$d$ times} = c \ominus (\underbrace{0' \oplus \dotsb \oplus 0'}_\text{$d$ times}). \]
\end{prop}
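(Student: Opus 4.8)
The plan is to extract from the stated equivalence a single functional identity relating the two group operations, and then to read off both the isomorphism and the two formulas for $c'$ from suitable substitutions. Throughout, $\ominus$ and $\boxminus$ denote the inverse operations in $(G,\oplus)$ and $(G,\boxplus)$ respectively, and I would write $z_k := \underbrace{0 \boxplus \dotsb \boxplus 0}_{k}$ for the $k$-fold $\boxplus$-sum of the $\oplus$-identity $0$, so that $z_0 = 0'$ and $z_1 = 0$.

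First I would record the basic consequence of the hypothesis. Fix $x_1,\dots,x_d \in G$ arbitrarily and let $y$ be the unique element with $x_1 \oplus \dotsb \oplus x_d \oplus y = c$, that is, $y = c \ominus (x_1 \oplus \dotsb \oplus x_d)$. By the forward implication, $x_1 \boxplus \dotsb \boxplus x_d \boxplus y = c'$, so also $y = c' \boxminus (x_1 \boxplus \dotsb \boxplus x_d)$. Equating the two expressions for $y$ yields
\begin{equation*}
c \ominus (x_1 \oplus \dotsb \oplus x_d) = c' \boxminus (x_1 \boxplus \dotsb \boxplus x_d) \qquad\text{for all } x_1,\dots,x_d \in G. \tag{$*$}
\end{equation*}
Specialising $(*)$ to the tuple $(a,0,\dots,0)$ gives $a \boxplus z_{d-1} = c' \boxminus (c \ominus a)$, while the tuple $(a,b,0,\dots,0)$ gives $a \boxplus b \boxplus z_{d-2} = c' \boxminus (c \ominus (a \oplus b))$. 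Applying the first relation with $a$ replaced by $a \oplus b$ and comparing with the second gives $a \boxplus b \boxplus z_{d-2} = (a \oplus b) \boxplus z_{d-1}$; since $z_{d-1} = z_{d-2} \boxplus 0$, cancelling $z_{d-2}$ in the group $(G,\boxplus)$ leaves the key relation
\begin{equation*}
a \boxplus b = (a \oplus b) \boxplus 0 \qquad\text{for all } a,b \in G. \tag{$\dagger$}
\end{equation*}
Here $d \ge 2$ is exactly what is needed for the tuple $(a,b,0,\dots,0)$ to exist, with the degenerate value $z_{d-2} = 0'$ when $d = 2$.

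From $(\dagger)$ everything follows by routine group manipulation. Taking $a = x$ and $b = 0'$ in $(\dagger)$ gives $x = (x \oplus 0') \boxplus 0$, that is, $x \oplus 0' = x \boxminus 0$, so the two descriptions of the map $\phi\colon x \mapsto x \boxminus 0 = x \oplus 0'$ agree. Then $\phi$ is a homomorphism, since $\phi(a) \boxplus \phi(b) = (a \boxminus 0) \boxplus (b \boxminus 0) = (a \boxplus b) \boxminus 0 \boxminus 0 = ((a \oplus b) \boxplus 0) \boxminus 0 \boxminus 0 = (a \oplus b) \boxminus 0 = \phi(a \oplus b)$, using $(\dagger)$; moreover $\phi(0) = 0 \boxminus 0 = 0'$, and $\phi$ is a bijection because $x \mapsto x \boxminus 0$ is a $\boxplus$-translation. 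Hence $\phi$ is an isomorphism. Finally, substituting $x_1 = \dots = x_d = 0$ into $(*)$ gives $c = c' \boxminus z_d$, i.e.\ $c' = c \boxplus z_d$, and substituting $x_1 = \dots = x_d = 0'$ gives $c' = c \ominus (\underbrace{0' \oplus \dotsb \oplus 0'}_{d \text{ times}})$, which are the two claimed formulas.

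I expect the only delicate point to be the derivation of $(\dagger)$: one must keep the two sets of identities and inverses strictly separate and apply cancellation only within a single group, and one must verify that the degenerate instances $z_0 = 0'$ and $z_1 = 0$ are handled correctly, so that the argument genuinely uses $d \ge 2$ and nothing stronger. Everything after $(\dagger)$ is then a mechanical verification.
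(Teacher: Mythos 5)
Your proof is correct and rests on the same key identity as the paper's, namely $a\boxplus b=(a\oplus b)\boxplus 0$, obtained in both cases by comparing two tuples with the same $\oplus$-sum; the steps from there to the isomorphism are essentially identical. The only differences are cosmetic and in your favour: you argue directly for general $d$ via the functional identity $(*)$ instead of reducing to $d=2$ as the paper does, and you explicitly verify both formulas for $c'$ by substitution into $(*)$, a step the paper leaves implicit.
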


\begin{proof}
It is clear that the cases $d\ge 3$ follow from the case $d=2$, which we now show.
First note that for any $x,y\in G$, $x\boxplus y\boxplus(c \ominus x \ominus y) = c'$ and $(x \oplus y)\boxplus 0\boxplus(c \ominus x \ominus y) = c'$, since $x \oplus y \oplus (c \ominus x \ominus y)=(x \oplus y) \oplus 0 \oplus (c \ominus x \ominus y)=c$.
Thus we have $x\boxplus y = (x \oplus y)\boxplus 0$, hence
$(x \oplus y) \boxminus 0 = x \boxplus y \boxminus 0 \boxminus 0 = (x \boxminus 0) \boxplus (y \boxminus 0)$.
Similarly we have $x \oplus y=(x\boxplus y) \oplus 0'$, hence $x \boxplus y = x \oplus y \ominus 0'$,
so in particular
$0' = 0 \boxminus 0 = 0 \oplus (\boxminus 0) \ominus 0'$, and $\boxminus 0 = 0' \oplus 0'$.
So we also have $x\boxminus 0 = x \oplus (\boxminus 0) \ominus 0' = x \oplus 0'$, 
and $(G,\oplus,0) \to (G,\boxplus,0'), x\mapsto x\boxminus 0 = x \oplus 0'$ is an isomorphism.
\end{proof}

\section{Structure theorem}\label{sec:proof}

We prove Theorem~\ref{thm:main1} in this section. 
The main idea is to induct on the dimension $d$ via projection.
We start with the following statement of the slightly different case $d=3$, which is \cite{LS18}*{Theorem~1.1}.
Note that it contains one more type that does not occur when $d\ge 4$.

\begin{theorem}\label{thm:plane}
Let $K > 0$ and suppose $n \ge C\max\{K^8,1\}$ for some sufficiently large absolute constant $C > 0$.
Let $P$ be a set of $n$ points in $\RR\PP^3$ with no $3$ points collinear.
If $P$ spans at most $Kn^2$ ordinary planes,
then up to projective transformations, $P$ differs in at most $O(K)$ points from a configuration of one of the following types:
\begin{enumerate}[label=\rom]
\item A subset of a plane;
\item A subset of two disjoint conics lying on the same quadric with $\frac{n}{2}\pm O(K)$ points of $P$ on each of the two conics;
\item A coset of a subgroup of the smooth points of an elliptic or acnodal space quartic curve.
\end{enumerate}
\end{theorem}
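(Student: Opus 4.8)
The plan is to reduce the three-dimensional problem to the planar ordinary-lines theorem by projection, and then to lift the resulting planar structure back to $\RR\PP^3$. The starting observation is that, since no three points of $P$ are collinear, for each $p\in P$ the projection $\pi_p$ restricted to $P\setminus\{p\}$ is injective, so $\pi_p(P\setminus\{p\})$ is a set of exactly $n-1$ points in $\RR\PP^2$. An ordinary plane of $P$ through $p$ meets $P$ in $p$ together with exactly two further points, which project to a line through exactly two points of $\pi_p(P\setminus\{p\})$; conversely an ordinary line of the projection lifts to an ordinary plane through $p$. Hence, writing $N$ for the number of ordinary planes of $P$ and $t(p)$ for the number of ordinary planes through $p$, each ordinary plane is counted by $t(p)$ for each of its three points, so $\sum_{p\in P} t(p) = 3N \le 3Kn^2$; by averaging, a positive proportion of the points $p$ satisfy $t(p)=O(Kn)$, that is, $\pi_p(P\setminus\{p\})$ spans $O(Kn)$ ordinary lines.

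For each such good $p$ I would apply Green and Tao's Intermediate Structure Theorem \cite{GT13}*{Proposition~5.3}, which states that all but $O(K)$ points of $\pi_p(P\setminus\{p\})$ lie either on a bounded number of lines or on an irreducible cubic (with the conic-plus-line configurations appearing as degenerate cubics). The advantage of using this intermediate form, rather than the full planar structure theorem, is that the many-lines alternative --- the technically heavy case in \cite{GT13}*{Section~6} --- can be disposed of directly in $\RR\PP^3$ using the no-three-collinear hypothesis, since points concentrated on few lines after projection either violate general position or force too many ordinary planes. In the remaining case, pulling back through $\pi_p$ shows that all but $O(K)$ points of $P$ lie on the cubic cone with apex $p$ over the planar cubic. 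Carrying this out for two generic good apices $p\neq p'$ and intersecting the two cones, B\'ezout's theorem (Theorem~\ref{thm:bezout}) confines all but $O(K)$ points of $P$ to an algebraic curve of bounded degree; passing to the irreducible component carrying $\Omega(n)$ points, either this component is planar --- giving case (i) --- or it is an irreducible non-degenerate curve $\delta$.

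In the non-degenerate case I would pin down the degree and type of $\delta$ using the projection lemmas and classical curve theory. Transporting the few-ordinary-planes hypothesis along projections via Lemmas~\ref{lem:projection1}--\ref{lem:projection3} forces $\delta$ to have degree $4$ (a space quartic) or to be a union of two conics on a common quadric, since a non-degenerate curve of higher degree, or one admitting too many trisecants, spans $\Omega(n^2)$ ordinary planes. This is the step I expect to be the main obstacle: unlike in higher dimensions, where Lemma~\ref{lemma:singular_projection} is available, in $\RR\PP^3$ \emph{both} species of rational quartic (and the elliptic quartic) project to planar cubics, so projection alone does not distinguish them. The second-species quartic carries a trisecant through every point and so must be excluded by a separate count of the ordinary planes it forces, and one must additionally match the planar alternatives (elliptic cubic versus conic-plus-line) with the three-dimensional alternatives (elliptic or acnodal quartic versus two conics on a quadric).

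Finally, on an elliptic or acnodal space quartic the coplanarity of four points is governed by the group law of Proposition~\ref{prop:elliptic_group} (respectively the acnodal case of Proposition~\ref{prop:rational_group}): four points are coplanar if and only if they sum to a fixed element, and an analogous statement holds for the two-conic configuration. To upgrade ``all but $O(K)$ points lie on $\delta$'' to ``$P$ differs in $O(K)$ points from a genuine coset of a finite subgroup,'' I would feed the $O(Kn^2)$ bound on ordinary planes into the additive-combinatorial machinery of \cite{GT13}*{Section~7}, in particular \cite{GT13}*{Proposition~A.5}, applied to the group on $\delta$: a set in an abelian group with few zero-sum quadruples relative to its size must be close to a coset of a subgroup. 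This quantitative step, together with the bounded-degree curve extraction, is what drives the threshold $n\ge CK^8$ and completes the classification into cases (i)--(iii).
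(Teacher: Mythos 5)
The paper does not actually prove Theorem~\ref{thm:plane}: it is quoted verbatim as \cite{LS18}*{Theorem~1.1} and serves only as the base case of the induction behind Theorem~\ref{thm:main1}, so there is no in-paper argument to compare yours against line by line. That said, your outline faithfully reconstructs the strategy of \cite{LS18} exactly as the authors summarise it in their ``Relation to previous work'' discussion: projection from a point lying on few ordinary planes, Green and Tao's Intermediate Structure Theorem \cite{GT13}*{Proposition~5.3} (made usable by the general-position hypothesis, avoiding their Full Structure Theorem), intersection of two cubic cones via B\'ezout together with the quantitative trisecant lemmas, and finally the group law plus the additive-combinatorial machinery of \cite{GT13}*{Section~7} to upgrade ``lies on the curve'' to ``is close to a coset.'' Two places where your sketch is thinner than the real argument deserve flagging. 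First, collinearity in $\pi_p(P\setminus\{p\})$ corresponds to coplanarity with $p$ upstairs, not to collinearity within $P$, so the ``many lines'' alternative is not excluded by general position alone; it must be killed by showing that several heavily populated planes through $p$ force $\Omega(n^2)$ ordinary planes. Second, the exclusion of the smooth rational quartic of the second species --- which you correctly identify as the main obstacle --- is the genuinely delicate step, since Lemma~\ref{lemma:singular_projection} fails for $d=3$ and both species project to rational planar cubics; one must argue directly from the trisecant structure of the second-species curve. Neither is a gap in principle, and both are resolved in \cite{LS18}.
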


We first prove the following weaker lemma using results from Section~\ref{sec:tools}.

\begin{lemma}\label{lem:intermediate}
Let $d \ge 4$, $K > 0$, and suppose $n \ge C\max\{d^3 2^dK, (dK)^8\}$ for some sufficiently large absolute constant $C > 0$. 
Let $P$ be a set of $n$ points in $\RR\PP^d$ where every $d$ points span a hyperplane.
If $P$ spans at most $K\binom{n-1}{d-1}$ ordinary hyperplanes, then all but at most $O(d2^dK)$ points of $P$ are contained in a hyperplane or an irreducible non-degenerate curve of degree $d+1$ that is either elliptic or rational and singular.
\end{lemma}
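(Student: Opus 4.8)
The plan is to reduce the $d$-dimensional problem to the $(d-1)$-dimensional case by projecting from a well-chosen point of $P$, applying the inductive hypothesis (with the base case $d=3$ being Theorem~\ref{thm:plane}), and then pulling the resulting structure back up to $\RR\PP^d$. The key observation driving the induction is that projection from a point $p \in P$ sends ordinary hyperplanes of $P$ through $p$ to ordinary hyperplanes of the projected set $\pi_p(P \setminus \{p\})$ in $\RR\PP^{d-1}$: since every $d$ points of $P$ span a hyperplane (general position), no three points of $P$ are collinear, so the projection is injective on $P \setminus \{p\}$, and a hyperplane $\Pi \ni p$ containing exactly $d$ points of $P$ projects to a hyperplane of $\RR\PP^{d-1}$ containing exactly $d-1$ points of $\pi_p(P\setminus\{p\})$.

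**First I would** set up a counting/averaging argument to find a good projection point. Each ordinary hyperplane of $P$ passes through exactly $d$ of the $n$ points, so summing over $p \in P$ the number of ordinary hyperplanes through $p$ gives $d$ times the total, which is at most $dK\binom{n-1}{d-1}$. By averaging, most points $p$ have at most $O(dK)\binom{n-1}{d-1}/n = O(dK)\binom{n-2}{d-2}$ ordinary hyperplanes through them, so that $\pi_p(P \setminus \{p\})$ spans few ordinary hyperplanes in $\RR\PP^{d-1}$ relative to the threshold $K'\binom{n-2}{d-2}$ needed to invoke induction (one must track how $K$ grows, which accounts for the $d$-dependent factors and the $O(d2^dK)$ error term). **Next I would** verify that the projected set still satisfies the general-position hypothesis: I need the projection point $p$ to avoid the finitely many secant, tangent, and trisecant configurations that would cause three projected points to become collinear, and here the quantitative projection results (Lemmas~\ref{lem:projection1}, \ref{lem:projection2}, \ref{lem:projection3}) guarantee that only $O(\text{poly}(d))$ points $p$ are bad. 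Combining the averaging bound with these exclusions, a valid projection point $p$ exists.

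**Then** the inductive hypothesis tells us that all but $O(d2^{d-1}K')$ points of $\pi_p(P\setminus\{p\})$ lie on a hyperplane of $\RR\PP^{d-1}$ or on an irreducible non-degenerate curve of degree $d$ that is elliptic or rational-singular (for $d-1 = 3$ we additionally get the two-disjoint-conics case from Theorem~\ref{thm:plane}, which must be handled separately). Pulling back, the preimage of the hyperplane case is a cone with apex $p$, and intersecting with the fact that most points lie on it, together with a second projection from a different good point $p'$, forces the bulk of $P$ either into a hyperplane or onto a curve: the two curves through the two projection images must be the cones' intersection, which is a curve of degree $d+1$ in $\RR\PP^d$ whose projections are the degree-$d$ curves found. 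I would use B\'ezout (Theorem~\ref{thm:bezout}) to bound the degree of this intersection curve and to argue it is irreducible and non-degenerate of the correct degree, and then Propositions~\ref{prop:curves} and~\ref{prop:secant}/Corollary~\ref{lem:first_species} to conclude it is elliptic or rational-singular. **The hard part will be** the step of lifting two planar/lower-dimensional structures to a single curve in $\RR\PP^d$ and controlling the accumulated error terms: I must ensure that the $O(d2^dK)$ exceptional points from the two different projections refer to essentially the same exceptional set, that the two degree-$d$ projected curves genuinely lift to one common degree-$(d+1)$ space curve rather than to incompatible configurations, and that the smoothness/singularity type is correctly inherited under projection—this last point is exactly where Lemma~\ref{lemma:singular_projection} is essential and is why the induction must start at $d=3$ rather than in the plane.
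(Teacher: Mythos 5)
Your proposal follows essentially the same route as the paper's proof: induction on $d$ from the base case of Theorem~\ref{thm:plane}, an averaging argument to find points with few ordinary hyperplanes through them, projection and the induction hypothesis to get a hyperplane or a degree-$d$ curve in $\RR\PP^{d-1}$, intersection of two cones over such curves controlled by B\'ezout and Lemmas~\ref{lem:projection1}--\ref{lem:projection3}, and Lemma~\ref{lemma:singular_projection} to pin down singularity. One small correction: the general-position hypothesis for the projected set is automatic (if $d-1$ projected points lay in a $(d-3)$-flat, their preimages together with $p$ would be $d$ points of $P$ not spanning a hyperplane), so the projection lemmas are not needed there — they are needed only later, to choose points on the intersection curve from which the projection is generically one-to-one and separates components.
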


\begin{proof}
We use induction on $d \ge 4$ to show that for all $K>0$ and all $n \ge f(d,K)$,
for all sets $P$ of $n$ points in $\RR\PP^d$ with any $d$ points spanning a hyperplane,
if $P$ has at most $K\binom{n-1}{d-1}$ ordinary hyperplanes,
then all but at most $g(d,K)$ points of $P$ are contained in a hyperplane or an irreducible non-degenerate curve of degree $d+1$, and that if the curve is rational then it has to be singular, where
\[ g(d,K) := \sum_{k=0}^d k^3 2^{d-k} + C_12^d(d-1)K \]
and
\[ f(d,K) := d^2(g(d,K) + C_2 d^{10}) + C(d-1)^8K^8 \]
for appropriate $C_1,C_2>0$ to be determined later and $C$ from Theorem~\ref{thm:plane}.
We assume that this holds in $\RR\PP^{d-1}$ if $d \ge 5$, while Theorem~\ref{thm:plane} takes the place of the induction hypothesis when $d=4$.

Let $P'$ denote the set of points $p \in P$ such that there are at most $\frac{d-1}{d-2}K\binom{n-2}{d-2}$ ordinary hyperplanes through $p$.
By counting incident point-ordinary-hyperplane pairs, we obtain
\[dK\binom{n-1}{d-1}  > (n-|P'|)\frac{d-1}{d-2}K\binom{n-2}{d-2},\]
which gives $|P'| > n/(d-1)^2$.
For any $p \in P'$, the projected set $\pi_p(P \setminus \{p\})$ has $n-1$ points and spans at most $\frac{d-1}{d-2}K\binom{n-2}{d-2}$ ordinary $(d-2)$-flats in $\RR\PP^{d-1}$, and any $d-1$ points of $\pi_p(P \setminus \{p\})$ span a $(d-2)$-flat.
To apply the induction hypothesis, we need
\[f(d,K) \ge 1+ f(d-1,\tfrac{d-1}{d-2}K),\]
as well as $f(3,K)\ge C\max\{K^8,1\}$, both of which easily follow from the definition of $f(d,K)$.
Then all except $g(d-1,\frac{d-1}{d-2}K)$
points of $\pi_p(P \setminus \{p\})$ are contained in a $(d-2)$-flat or a non-degenerate curve $\gamma_p$ of degree $d$ in $\RR\PP^{d-1}$, which is either irreducible or possibly two conics with $\frac{n}{2} \pm O(K)$ points on each when $d=4$.

If there exists a $p \in P'$ such that all but at most $g(d-1,\frac{d-1}{d-2}K)$
points of $\pi_p(P \setminus \{p\})$ are contained in a $(d-2)$-flat, then we are done, since
$g(d,K) > g(d-1,\frac{d-1}{d-2}K)$.
Thus we may assume without loss of generality that for all $p\in P'$ we obtain a curve $\gamma_p$.

Let $p$ and $p'$ be two distinct points of $P'$.
Then all but at most $2g(d-1,\frac{d-1}{d-2}K)$ points of $P$ lie on the intersection $\delta$ of the two cones $\overline{\pi^{-1}_p(\gamma_p)}$ and $\overline{\pi^{-1}_{p'}(\gamma_{p'})}$.
Since the curves $\gamma_p$ and $\gamma_{p'}$ are $1$-dimensional, the two cones are $2$-dimensional.
Since their vertices $p$ and $p'$ are distinct, the cones do not have a common irreducible component, so their intersection is a variety of dimension at most~$1$.
By B\'ezout's theorem (Theorem~\ref{thm:bezout}),
$\delta$ has total degree at most $d^2$, so has to have at least one $1$-dimensional irreducible component.
Let $\delta_1, \dotsc, \delta_k$ be the $1$-dimensional components of $\delta$, where $1\le k \le d^2$.
Let $\delta_1$ be the component with the most points of $P'$ amongst all the $\delta_i$, so that
\[|P' \cap \delta_1| \ge \frac{|P'|-2g(d-1,\frac{d-1}{d-2}K)}{d^2}.\]
Choose a $q \in P' \cap \delta_1$ such that $\pi_q$ is generically one-to-one on $\delta_1$. 
By Lemma~\ref{lem:projection1} there are at most $O(\deg(\delta_1)^4)=O(d^8)$ exceptional points, so we need
\begin{equation}\label{constraint1}
|P'\cap\delta_1|> C_2 d^8.
\end{equation}
Since $|P'| > n/(d-1)^2$, we need
\[ \frac{\frac{n}{(d-1)^2}-2g(d-1,\frac{d-1}{d-2}K)}{d^2} > C_2 d^8,\]
or equivalently, $n > (d-1)^2(2g(d-1,\frac{d-1}{d-2}K)+C_2 d^{10})$.
However, this follows from the definition of $f(d,K)$.
If $\pi_q$ does not map $\delta_1 \setminus \{q\}$ into $\gamma_q$, then by B\'ezout's theorem (Theorem~\ref{thm:bezout}), $n-1-g(d-1,\binom{d-1}{d-2}K)\le d^3$.
However, this does not occur since $f(d,K) > g(d-1,\binom{d-1}{d-2}K) + d^3 + 1$.
Thus, $\pi_q$ maps $\delta_1\setminus\{q\}$ into $\gamma_q$, hence $\delta_1$ is an irreducible curve of degree $d+1$ (or, when $d=4$, possibly a twisted cubic containing at most $n/2+O(K)$ points of $P$).

We first consider the case where $\delta_1$ has degree $d+1$.
We apply Lemma~\ref{lem:projection3} to $\delta_1$ and each $\delta_i$, $i=2,\dots,k$, and for this we need $|P'\cap\delta_1| > C'' d^4$, since $\deg(\delta_1)\le d^2$ and $\sum_{i=2}^d\deg(\delta_i)\le d^2$.
However, this condition is implied by \eqref{constraint1}.
Thus we find a $q' \in P' \cap \delta_1$ such that $\overline{\pi_{q'}(\delta_1\setminus\{q'\})}=\gamma_{q'}$ as before, and in addition, the cone $\overline{\pi_{q'}^{-1}(\gamma_{q'})}$ does not contain any other $\delta_i$, $i=2,\dots,k$.
Since all points of $P$ except $2g(d-1,\frac{d-1}{d-2}K)+d^2$ lie on $\delta_1\cup\dots\cup\delta_k$, we obtain by B\'ezout's theorem (Theorem~\ref{thm:bezout}) that
\[|P\setminus{\delta_1}| \le d(d^2-d-1) + d^2 + 2g(d-1,\tfrac{d-1}{d-2}K) < g(d,K).\]

We next dismiss the case where $d=4$ and $\delta_1$ is a twisted cubic.
We redefine $P'$  to be the set of points $p\in P$ such that there are at most $12Kn^2$ ordinary hyperplanes through $p$.
Then $|P'|\ge 2n/3$.
Since we have $|P\cap\delta_1|\le n/2 +O(K)$, by Lemma~\ref{lem:projection2} there exists $q'\in P'\setminus\delta_1$ such that the projection from $q'$ will map $\delta_1$ onto a twisted cubic in $\RR\PP^3$.
However, by B\'ezout's theorem (Theorem~\ref{thm:bezout}) and Theorem~\ref{thm:plane}, $\pi_{q'}(\delta_1\setminus\{q'\})$ has to be mapped onto a conic, which gives a contradiction.

Note that $g(d,K)=O(d 2^d K)$ since $K = \Omega(1/d)$ by \cite{BM17}*{Theorem 2.4}.
We have shown that all but $O(d2^dK)$ points of $P$ are contained in a hyperplane or an irreducible non-degenerate curve $\delta$ of degree $d+1$.
By Proposition~\ref{prop:curves}, this curve is either elliptic or rational.
It remains to show that if $\delta$ is rational, then it has to be singular.
Similar to what was shown above, we can find more than $3$ points $p\in\delta$ for which the projection $\overline{\pi_p(\delta\setminus\{p\})}$ is a rational curve of degree $d$ that is singular by the induction hypothesis.
Lemma~\ref{lemma:singular_projection} now implies that $\delta$ is singular.
\end{proof}

To get the coset structure on the curves as stated in Theorem~\ref{thm:main1}, we use a simple generalisation of an additive combinatorial result used by Green and Tao \cite{GT13}*{Proposition A.5}.
This captures the principle that if a finite subset of a group is almost closed, then it is close to a subgroup. 
The case $d=3$ was shown in~\cite{LMMSSZ18}.

\begin{lemma}\label{cor:a5}
Let $d \ge 2$.
Let $A_1, A_2, \dotsc, A_{d+1}$ be $d+1$ subsets of some abelian group $(G,\oplus)$, all of size within $K$ of $n$, where $K \le cn/d^2$ for some sufficiently small absolute constant $c > 0$.
Suppose there are at most $Kn^{d-1}$ $d$-tuples $(a_1,a_2,\dotsc,a_d) \in A_1 \times A_2 \times \dotsb \times A_d$ for which $a_1 \oplus a_2 \oplus \dotsb \oplus a_d \notin A_{d+1}$. Then there is a subgroup $H$ of $G$ and cosets $H \oplus x_i$ for $i = 1, \dotsc, d$ such that 
\begin{equation*}
|A_i \tri (H \oplus x_i)|, \left| A_{d+1} \tri \left( H \oplus \bigoplus_{i=1}^d x_i \right) \right| = O(K).
\end{equation*}
\end{lemma}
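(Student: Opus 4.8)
The plan is to induct on the dimension $d$, taking as base case $d=2$, where the statement is precisely a three-set reformulation of Green and Tao's \cite{GT13}*{Proposition~A.5} (the case $d=3$ appears in \cite{LMMSSZ18}). Since I may use that result, all the work is in the inductive step, which I would split into two stages: first reduce from $d$ to $d-1$ to locate the subgroup $H$ and the cosets containing $A_1,\dots,A_{d-1}$ and $A_{d+1}$, and then separately recover the coset structure of the remaining set $A_d$.

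\emph{Reducing the dimension.} Assume the result for $d-1$, and suppose $A_1,\dots,A_{d+1}$ satisfy the hypotheses for $d$. For each $a_d\in A_d$, the number of $(d-1)$-tuples $(a_1,\dots,a_{d-1})\in A_1\times\dots\times A_{d-1}$ with $a_1\oplus\dots\oplus a_{d-1}\notin A_{d+1}\ominus a_d$ is exactly the number of bad $d$-tuples whose last coordinate is $a_d$. Summing over $a_d\in A_d$ gives at most $Kn^{d-1}$, so some $t:=a_d$ has bad-count at most $Kn^{d-1}/|A_d|\le K'n^{d-2}$ with $K'=Kn/|A_d|=K/(1-O(K/n))$. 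Fixing such a good $t$, the sets $A_1,\dots,A_{d-1}$ together with the translate $A_{d+1}\ominus t$ satisfy the hypotheses for $d-1$ with parameter $K'$. The induction hypothesis then yields a subgroup $H$ and cosets $H\oplus x_i$, $i=1,\dots,d-1$, with $|A_i\tri(H\oplus x_i)|=O(K')$ and $|(A_{d+1}\ominus t)\tri(H\oplus\bigoplus_{i=1}^{d-1}x_i)|=O(K')$. Writing $x_d:=t$, the latter says $|A_{d+1}\tri(H\oplus\bigoplus_{i=1}^{d}x_i)|=O(K')$, so the conclusion already holds for $A_1,\dots,A_{d-1}$ and $A_{d+1}$.

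\emph{Recovering $A_d$.} It remains to prove $|A_d\tri(H\oplus x_d)|=O(K)$. Restricting the first $d-1$ coordinates to $A_i\cap(H\oplus x_i)$, each of size $|H|-O(K)$, an averaging over choices of representatives $\hat a_i\in A_i\cap(H\oplus x_i)$ shows that for some fixed choice there are at most $O(K)$ values $a_d\in A_d$ with $\hat a_1\oplus\dots\oplus\hat a_{d-1}\oplus a_d\notin A_{d+1}$. For every other $a_d$, using $\hat a_1\oplus\dots\oplus\hat a_{d-1}\in H\oplus(\bigoplus_{i<d}x_i)$ together with $A_{d+1}\subseteq(H\oplus\bigoplus_{i\le d}x_i)\cup E$ where $|E|=O(K)$, one forces $a_d\in H\oplus x_d$ outside an $O(K)$ exceptional set; hence $|A_d\setminus(H\oplus x_d)|=O(K)$. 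The reverse inclusion is a counting step: since $|H|=|A_1|+O(K)=n+O(K)$ and $|A_d\cap(H\oplus x_d)|\ge|A_d|-O(K)=n-O(K)$, we obtain $|(H\oplus x_d)\setminus A_d|=O(K)$, and therefore $|A_d\tri(H\oplus x_d)|=O(K)$, closing the induction.

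\emph{Main obstacle.} The delicate point is not any individual step but the uniform control of the implied constants through all $d$ levels of the recursion: each dimension reduction replaces the parameter $K$ by $K/(1-O(K/n))$, and each of the two averaging arguments introduces a correction factor of the form $(1-O(K/n))^{-O(d)}$. The hypothesis $K\le cn/d^2$ is exactly what makes every such factor $1+O(c/d^2)$ and their product over the $O(d)$ levels equal to $1+O(c)=O(1)$; this is what keeps each symmetric difference at $O(K)$ with an absolute constant, and it is the estimate I would verify most carefully (together with checking that the reduced parameter $K'$ still satisfies $K'\le cn/(d-1)^2$ so the induction hypothesis applies).
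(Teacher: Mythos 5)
Your proposal is correct, and its first half coincides with the paper's argument: both pigeonhole over one of the first $d$ sets to find a good translate and then apply the induction hypothesis (base case $d=2$ being \cite{GT13}*{Proposition A.5}) to the remaining $d-1$ sets together with the translated $A_{d+1}$. Where you genuinely diverge is in recovering the last set. The paper runs the pigeonhole--induction step a \emph{second} time (on $A_d\cap(H\oplus x_d)$, applied to $A_1,\dots,A_{d-1},A_{d+1}\ominus b_d$), obtaining a second subgroup $H'$ and then showing $H=H'$ by intersecting nearly-full cosets; you instead recover $A_d$ directly from the already-established structure, via a second averaging over representatives $\hat a_i\in A_i\cap(H\oplus x_i)$ and the containment $A_{d+1}\subseteq(H\oplus\bigoplus_i x_i)\cup E$. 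Your route saves one application of the induction hypothesis and the subgroup-identification step, and it checks out. The trade-off is in the constant bookkeeping, and here your stated accounting is incomplete: the delicate point is not only the multiplicative factors $(1-O(K/n))^{-O(d)}$ you flag, but the fact that your bound on $|A_d\tri(H\oplus x_d)|$ is an \emph{additive} combination of the constants already obtained for $A_1$ and $A_{d+1}$ (via $|E|$ and $|H|-|A_1|$), so the constant for the specially-treated set is roughly three times the generic one. This does not compound geometrically --- each $A_i$ is the special set at most once, and the sets $A_1$ and $A_{d+1}$ feeding into the additive bound are only ever handled by the induction hypothesis, so all constants remain absolute --- but this is exactly the point one must verify, and it is worth making explicit. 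The paper's double-induction route makes this transparent by proving the clean recursive bound $C\prod_{i=1}^d(1+\tfrac{1}{i^2})K$ at every level, at the cost of the extra step identifying $H$ with $H'$.
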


\begin{proof}
We use induction on $d \ge 2$ to show that the symmetric differences in the conclusion of the lemma have size at most $C \prod_{i=1}^d (1+\frac{1}{i^2})K$ for some sufficiently large absolute constant $C > 0$.
The base case $d=2$ is \cite{GT13}*{Proposition A.5}.

Fix a $d \ge 3$.
By the pigeonhole principle, there exists $b_1 \in A_1$ such that there are at most
\[ \frac{1}{n-K}Kn^{d-1} \le \frac{1}{1-\frac{c}{d^2}} Kn^{d-2} \]
$(d-1)$-tuples $(a_2, \dotsc, a_d) \in A_2 \times \dotsb \times A_d$ for which $b_1 \oplus a_2 \oplus \dotsb \oplus a_d \notin A_{d+1}$, or equivalently $a_2 \oplus \dotsb \oplus a_d \notin A_{d+1} \ominus b_1$.
Since 
\[ \frac{1}{1-\frac{c}{d^2}} K \le \frac{c}{d^2-c}n \le \frac{c}{(d-1)^2}n, \]
we can use induction to get a subgroup $H$ of $G$ and $x_2, \dotsc, x_d\in G$ such that for $j = 2, \dotsc, d$ we have
\[ |A_j \tri (H \oplus x_j)|, \left|(A_{d+1} \ominus b_1) \tri \left(H \oplus \bigoplus_{j=2}^d x_j \right) \right| \le C\prod_{i=1}^{d-1}\left(1+\frac{1}{i^2}\right)\frac{1}{1-\frac{c}{d^2}}K. \]

Since $|A_d \cap (H \oplus x_d)| \ge n - K - C\prod_{i=1}^{d-1}(1+\frac{1}{i^2})\frac{1}{1-\frac{c}{d^2}}K$,
we repeat the same pigeonhole argument on $A_d\cap (H \oplus x_d)$ to find a $b_d \in A_d \cap (H \oplus x_d)$ such that there are at most 
\begin{align*}
\frac{1}{n - K - C\prod_{i=1}^{d-1}\left(1+\frac{1}{i^2}\right)\frac{1}{1-\frac{c}{d^2}}K} Kn^{d-1} &\le \frac{1}{1-\frac{c}{d^2}-C\prod_{i=1}^{d-1}\left(1+\frac{1}{i^2}\right) \frac{c}{d^2-c}} Kn^{d-2}\\
&\le \frac{1}{1-C_1\frac{c}{d^2-c}}Kn^{d-2} \\
&\le \left(1 + \frac{C_2c}{d^2-c}\right)Kn^{d-2}\\
&\le \left(1 + \frac{1}{d^2}\right)Kn^{d-2}
\end{align*}
$(d-1)$-tuples $(a_1, \dotsc, a_{d-1}) \in A_1 \times \dotsb A_{d-1}$ with $a_1 \oplus \dotsb \oplus a_{d-1} \oplus b_d \notin A_{d+1}$,
for some absolute constants $C_1,C_2>0$ depending on $C$, by making $c$ sufficiently small.
Now $(1+\frac{1}{d^2})K \le cn/(d-1)^2$, so by induction again, there exist a subgroup $H'$ of $G$ and elements $x_1, x_2', \dotsc, x_{d-1}' \in G$ such that for $k = 2, \dotsc, d-1$ we have
\begin{equation*}
|A_1 \tri (H' \oplus x_1)|, |A_k \tri (H' \oplus x_k')|, \left|(A_{d+1} \ominus b_d) \tri \left(H' \oplus x_1 \oplus \bigoplus_{k=2}^{d-1} x_k' \right) \right|
 \le C\prod_{i=1}^{d-1}\left(1+\frac{1}{i^2}\right) \left(1 + \frac{1}{d^2}\right) K.
\end{equation*}
From this, it follows that $|(H \oplus x_k) \cap (H' \oplus x_k')| \ge n - K - 2C\prod_{i=1}^d (1 + \frac{1}{i^2}) K = n - O(K)$.
Since $(H \oplus x_k) \cap (H' \oplus x_k')$ is non-empty, it has to be a coset of $H' \cap H$.
If $H' \neq H$, then $|H' \cap H| \le n/2 + O(K)$, a contradiction since $c$ is sufficiently small.
Therefore, $H=H'$, and $H \oplus x_k = H' \oplus x_k'$.
So we have
\[ |A_i \tri (H \oplus x_i)|, \left|A_{d+1} \tri \left(H \oplus \bigoplus_{\l=1}^{d-1} x_\l \oplus b_d \right) \right| \le C\prod_{i=1}^d\left(1+\frac{1}{i^2}\right)K. \]
Since $b_d \in H \oplus x_d$, we also obtain
\[ \left|A_{d+1} \tri \left(H \oplus \bigoplus_{i=1}^d x_i \right) \right| \le C\prod_{i=1}^d\left(1+\frac{1}{i^2}\right)K. \qedhere \]
\end{proof}

To apply Lemma~\ref{cor:a5}, we first need to know that removing $K$ points from a set does not change the number of ordinary hyperplanes it spans by too much.

\begin{lemma}\label{lem:stability}
Let $P$ be a set of $n$ points in $\RR\PP^d$, $d \ge 2$, where every $d$ points span a hyperplane.
Let $P'$ be a subset that is obtained from $P$ by removing at most $K$ points.
If $P$ spans $m$ ordinary hyperplanes, then $P'$ spans at most $m + \frac{1}{d}K\binom{n-1}{d-1}$
ordinary hyperplanes.
\end{lemma}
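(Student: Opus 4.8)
The plan is to compare the ordinary hyperplanes of $P$ and $P'$ directly, rather than to re-derive any structure. Write $S = P\setminus P'$, so $|S|\le K$. Since $P'\subseteq P$, every hyperplane meets $P'$ in at most as many points as it meets $P$. Hence each ordinary hyperplane of $P'$ (exactly $d$ points of $P'$) falls into exactly one of two classes. Either it contains exactly $d$ points of $P$, in which case those $d$ points all survive into $P'$ and it is one of the $m$ ordinary hyperplanes of $P$; or it contains at least $d+1$ points of $P$, which I will call a \emph{new} ordinary hyperplane. The ordinary hyperplanes of $P'$ of the first kind form a subset of the ordinary hyperplanes of $P$, so they number at most $m$, and it remains to bound the new ones by $\frac1d K\binom{n-1}{d-1}$.

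The key observation is that a new ordinary hyperplane $\Pi$ must lose at least one point in passing from $P$ to $P'$, so $\Pi$ contains at least one point $s\in S$. I would therefore bound the number of new ordinary hyperplanes by $\sum_{s\in S} N_s$, where $N_s$ denotes the number of hyperplanes through $s$ meeting $P'$ in exactly $d$ points; summing over $s\in S$ counts each new ordinary hyperplane at least once.

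The heart of the argument is the estimate $N_s \le \frac1d\binom{|P'|}{d-1}$, which I would prove by double counting $(d-1)$-subsets of $P'$. Fix $s\in S$, and let $\Pi$ be one of the $N_s$ hyperplanes, so that $T := \Pi\cap P'$ has exactly $d$ points. For each of the $d$ subsets $U\subseteq T$ of size $d-1$, the points $U\cup\{s\}$ are $d$ distinct points of $P$ (distinct because $s\notin P'\supseteq U$), and so by hypothesis they span a unique hyperplane, which must be $\Pi$. Consequently a given $(d-1)$-subset $U$ of $P'$ can arise from at most one such hyperplane, namely the unique one spanned by $U\cup\{s\}$, so the assignment $(\Pi,U)\mapsto U$ is injective. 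Counting the $d\,N_s$ pairs against the $\binom{|P'|}{d-1}$ available subsets yields the claim. This step is where all the content lies: everything hinges on the general-position hypothesis being used to make $U\cup\{s\}$ span a unique hyperplane, and the factor $\frac1d$ comes precisely from each hyperplane carrying $\binom{d}{d-1}=d$ of its $(d-1)$-subsets. The rest is bookkeeping.

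Finally I would assemble the pieces. If $S=\emptyset$ there are no new ordinary hyperplanes and the bound is immediate; otherwise $|P'|\le n-1$, so $\binom{|P'|}{d-1}\le\binom{n-1}{d-1}$ and
\[ \#\{\text{new ordinary hyperplanes}\} \le \sum_{s\in S} N_s \le |S|\cdot\frac1d\binom{n-1}{d-1} \le \frac1d K\binom{n-1}{d-1}. \]
Adding the contribution of at most $m$ from the surviving ordinary hyperplanes gives $m + \frac1d K\binom{n-1}{d-1}$, as required. I expect no obstacle beyond the double-counting step above; the main thing to be careful about is the clean dichotomy in the first paragraph and the fact that $s\notin U$, both of which rely only on $P'\subseteq P$ and on the general-position hypothesis.
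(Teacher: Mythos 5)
Your proof is correct, and it is essentially the paper's argument: the paper also attributes each newly created ordinary hyperplane to a removed point and bounds the number of such hyperplanes through a fixed point by $\frac{1}{d}\binom{n-1}{d-1}$ via exactly your double count (each relevant hyperplane carries $d$ of the $d$-point subsets through the distinguished point). The only cosmetic difference is that the paper removes the $K$ points one at a time while you compare $P$ and $P'$ in a single step; the key counting lemma is the same.
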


\begin{proof}
Fix a point $p\in P$.
Since every $d$ points span a hyperplane, there are at most $\binom{n-1}{d-1}$ sets of $d$ points from $P$ containing $p$ that span a hyperplane through $p$.
Thus, the number of $(d+1)$-point hyperplanes through $p$ is at most $\frac{1}{d}\binom{n-1}{d-1}$, since a set of $d+1$ points that contains $p$ has $d$ subsets of size $d$ that contain $p$.
If we remove points of $P$ one-by-one to obtain $P'$, we thus create at most $\frac{1}{d}K\binom{n-1}{d-1}$ ordinary hyperplanes.
\end{proof}

The following lemma then translates the additive combinatorial Lemma~\ref{cor:a5} to our geometric setting.

\begin{lemma}\label{lem:curve}
Let $d \ge 4$, $K>0$, and suppose $n \ge C(d^3K + d^4)$ for some sufficiently large absolute constant $C>0$.
Let $P$ be a set of $n$ points in $\RR\PP^d$ where every $d$ points span a hyperplane.
Suppose $P$ spans at most $K\binom{n-1}{d-1}$ ordinary hyperplanes,
and all but at most $dK$ points of $P$ lie on an elliptic normal curve or a rational singular curve $\delta$.
Then $P$ differs in at most $O(dK+d^2)$ points from a coset $H \oplus x$ of a subgroup $H$ of $\delta^*$, the smooth points of $\delta$, for some $x$ such that $(d+1)x \in H$.
In particular, $\delta$ is either an elliptic normal curve or a rational acnodal curve.
\end{lemma}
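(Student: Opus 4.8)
The plan is to pass to the abelian group $G=(\delta^*,\oplus)$ supplied by Proposition~\ref{prop:elliptic_group} or Proposition~\ref{prop:rational_group} and apply the additive result Lemma~\ref{cor:a5}. Write $Q=P\cap\delta^*$ and $P_1=P\setminus\delta^*$, so that $|P_1|\le dK$ and $n-dK\le|Q|\le n$. In every case the group law encodes cohyperplanarity as follows: $d+1$ smooth points lie on a hyperplane if and only if their $\oplus$-sum equals a fixed element $c\in G$, where $c=0$ for elliptic, cuspidal and acnodal curves and $c$ is a fixed element of $\{(0,0),(0,1)\}$ for crunodal curves. Consequently, $d$ distinct points $a_1,\dots,a_d\in Q$ spanning a hyperplane $\Pi$ meet $\delta$ in a unique further point $b=c\ominus(a_1\oplus\dots\oplus a_d)$; since $\deg\delta=d+1$, whenever $b\notin Q$ one has $\Pi\cap Q=\{a_1,\dots,a_d\}$ exactly. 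I therefore apply Lemma~\ref{cor:a5} with $A_1=\dots=A_d=Q$ and $A_{d+1}=c\ominus Q$, so that the event ``$a_1\oplus\dots\oplus a_d\in A_{d+1}$'' is precisely ``$b\in Q$''.

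The crux is to bound the number of bad ordered tuples $(a_1,\dots,a_d)\in Q^d$ with $b\notin Q$. I split them into three classes. Tuples with a repeated entry number at most $\binom{d}{2}n^{d-1}=O(d^2)n^{d-1}$, and I simply discard all of them. A bad tuple with distinct entries determines an unordered $d$-set $S$ whose span $\Pi_S$ satisfies $\Pi_S\cap Q=S$; thus $\Pi_S$ is either an ordinary hyperplane of $P$ (when it avoids $P_1$) or it passes through some point of $P_1$. The first kind injects into the ordinary hyperplanes of $P$, of which there are at most $K\binom{n-1}{d-1}$. For the second kind, a double count over pairs $(T,S)$ with $T\subseteq S$, $|T|=d-1$, using that $T\cup\{w\}$ spans a hyperplane meeting $\delta$ (hence $Q$) in at most $d+1$ points, shows that each fixed $w\in P_1$ lies on at most $\tfrac{2}{d}\binom{n}{d-1}$ such spans, contributing at most $|P_1|\cdot\tfrac{2}{d}\binom{n}{d-1}=O(K)\binom{n}{d-1}$ sets in total. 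Multiplying the set counts by the $d!$ orderings and using $d!\binom{n}{d-1}\le d\,n^{d-1}$, the number of bad tuples is $O(dK+d^2)\,n^{d-1}$.

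Setting $K'=O(dK+d^2)$, I invoke Lemma~\ref{cor:a5}: the hypothesis $n\ge C(d^3K+d^4)$ gives $K'\le cn/d^2$, and every $A_i$ has size within $dK\le K'$ of $n$. This yields a subgroup $H\le G$ and cosets $H\oplus x_i$ with $|Q\tri(H\oplus x_i)|=O(K')$; since all $A_i$ equal $Q$, the $H\oplus x_i$ are pairwise within $O(K')$ of one another and hence coincide in a single coset $H\oplus x$, while the last coset becomes $H\oplus dx$. Comparing $|(c\ominus Q)\tri(H\oplus dx)|=O(K')$ with $c\ominus Q\approx H\oplus(c\ominus x)$ forces $H\oplus(c\ominus x)=H\oplus dx$, i.e. $(d+1)x\ominus c\in H$. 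Because $|P\tri Q|=|P_1|\le dK$, the set $P$ differs from $H\oplus x$ in $O(dK+d^2)$ points.

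It remains to eliminate the cuspidal and crunodal cases, which will also give $c=0$. From $|Q\tri(H\oplus x)|=O(dK+d^2)\ll n$ we get $|H|=|H\oplus x|\ge n-O(dK+d^2)$, so $H$ is a finite subgroup of $G$ of order comparable to $n$. But a real cuspidal curve has $G\cong(\RR,+)$ and a real crunodal curve has $G\cong(\RR,+)\times\ZZ_2\cong(\RR^*,\cdot)$, and neither contains a finite subgroup of order exceeding $2$; this contradicts $|H|\approx n$ for $n$ large. Hence $\delta$ is an elliptic normal curve or a rational acnodal curve, for which $c=0$ and $(d+1)x\in H$, as required. The main obstacle is the counting in the second paragraph: matching bad tuples with near-ordinary hyperplanes and controlling the hyperplanes through the $O(dK)$ exceptional points of $P_1$ is exactly where the hypothesis that $P$ spans few ordinary hyperplanes is used, and where the final error term $O(dK+d^2)$ is pinned down.
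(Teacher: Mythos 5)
Your proof is correct and follows essentially the same route as the paper's: encode cohyperplanarity via the group law, bound the bad $d$-tuples, apply Lemma~\ref{cor:a5} with $A_1=\dots=A_d=P\cap\delta^*$ and $A_{d+1}=c\ominus(P\cap\delta^*)$, deduce $(d+1)x\in H$ by comparing the two coset approximations, and rule out the cuspidal and crunodal cases because $(\RR,+)$ and $(\RR,+)\times\ZZ_2$ have no large finite subgroups. The only cosmetic difference is that where the paper invokes Lemma~\ref{lem:stability} to show $P\cap\delta^*$ still spans few ordinary hyperplanes, you carry out the equivalent double count inline, bounding the number of $d$-subsets of $Q$ whose span passes through a given point of $P_1$ by $\tfrac{2}{d}\binom{n}{d-1}$.
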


\begin{proof}
Let $P' = P \cap \delta^*$.
Then by Lemma~\ref{lem:stability}, $P'$ spans at most $K\binom{n-1}{d-1}+d\frac{1}{d}K\binom{n-1}{d-1} = 2K\binom{n-1}{d-1}$ ordinary hyperplanes. 

First suppose $\delta$ is an elliptic normal curve or a rational cuspidal or acnodal curve.
If $a_1, \dotsc, a_d \in \delta^*$ are distinct, then by Propositions~\ref{prop:elliptic_group} and~\ref{prop:rational_group}, the hyperplane through $a_1, \dotsc, a_d$ meets $\delta$ again in the unique point $a_{d+1} = \ominus(a_1 \oplus \dotsb \oplus a_d)$. 
This implies that $a_{d+1} \in P'$ for all but at most $d!O(K\binom{n-1}{d-1})$ $d$-tuples $(a_1, \dotsc, a_d) \in (P')^d$ with all $a_i$ distinct.
There are also at most $\binom{d}{2}n^{d-1}$ $d$-tuples $(a_1, \dotsc, a_d) \in (P')^d$ for which the $a_i$ are not all distinct.
Thus, $a_1 \oplus \dotsb \oplus a_d \in \ominus P'$ for all but at most $O((dK+d^2)n^{d-1})$ $d$-tuples $(a_1, \dotsc, a_d) \in (P')^d$.
Applying Lemma~\ref{cor:a5} with $A_1 = \dotsb = A_d = P'$ and $A_{d+1} = \ominus P'$, we obtain a finite subgroup $H$ of $\delta^*$ and a coset $H \oplus x$ such that $|P' \tri (H \oplus x)| = O(dK+d^2)$ and 
$|\ominus P' \tri (H \oplus dx)| = O(dK+d^2)$, the latter being equivalent to $|P' \tri (H \ominus dx)| = O(dK+d^2)$. 
Thus we have $|(H \oplus x) \tri (H \ominus dx)| = O(dK+d^2)$, which implies $(d+1)x \in H$.
Also, $\delta$ cannot be cuspidal, otherwise by Proposition~\ref{prop:rational_group} we have $\delta^* \cong (\RR, +)$, which has no finite subgroup of order greater than~$1$.

Now suppose $\delta$ is a rational crunodal curve.
By Proposition~\ref{prop:rational_group}, there is a bijective map $\phi: (\RR,+) \times \ZZ_2 \rightarrow \delta^*$ such that $d+1$ points in $\delta^*$ lie in a hyperplane if and only if they sum to $h$, where $h=\phi(0,0)$ or $\phi(0,1)$ depending on the curve $\delta$.
If $h=\phi(0,0)$ then the above argument follows through, and we obtain a contradiction as we have by Proposition~\ref{prop:rational_group} that $\delta^* \cong (\RR,+) \times \ZZ_2$, which has no finite subgroup of order greater than~$2$.
Otherwise, the hyperplane through distinct $a_1, \dotsc, a_d \in \delta^*$ meets $\delta$ again in the unique point $a_{d+1} = \phi(0,1) \ominus(a_1 \oplus \dotsb \oplus a_d)$. 
As before, this implies that $a_{d+1} \in P'$ for all but at most $O((dK+d^2)n^{d-1})$ $d$-tuples $(a_1, \dotsc, a_d) \in (P')^d$,
or equivalently $a_1 \oplus \dotsb \oplus a_d \in \phi(0,1) \ominus P'$. 
Applying Lemma~\ref{cor:a5} with $A_1 = \dotsb = A_d = P'$ and $A_{d+1} = \phi(0,1) \ominus P'$, we obtain a finite subgroup $H$ of $\delta^*$, giving a contradiction as before.
\end{proof}

We can now prove Theorem~\ref{thm:main1}.

\begin{proof}[Proof of Theorem~\ref{thm:main1}]
By Lemma~\ref{lem:intermediate}, all but at most $O(d2^dK)$ points of $P$ are contained in a hyperplane or an irreducible curve $\delta$ of degree $d+1$ that is either elliptic or rational and singular. 
In the prior case, we get Case~\ref{case:hyperplane} of the theorem, so suppose we are in the latter case.
We then apply Lemma~\ref{lem:curve} to obtain Case~\ref{case:curve} of the theorem, completing the proof.
\end{proof}

\section{Extremal configurations}\label{sec:extremal}

We prove Theorems~\ref{thm:main2} and~\ref{thm:main3} in this section.
It will turn out that minimising the number of ordinary hyperplanes spanned by a set is equivalent to maximising the number of $(d+1)$-point planes,
thus we can apply Theorem~\ref{thm:main1} in both theorems.
Then we only have two cases to consider, where most of our point set is contained either in a hyperplane or a coset of a subgroup of an elliptic normal curve or the smooth points of a rational acnodal curve.

The first case is easy, and we get the following lower bound.

\begin{lemma}\label{lem:hyperplane}
Let $d \ge 4$, $K \ge 1$, and let $n \ge 2dK$.
Let $P$ be a set of $n$ points in $\RR\PP^d$ where every $d$ points span a hyperplane.
If all but $K$ points of $P$ lie on a hyperplane, then $P$ spans at least $\binom{n-1}{d-1}$ ordinary hyperplanes, with equality if and only if $K = 1$.
\end{lemma}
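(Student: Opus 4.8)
The plan is to induct on the number $K$ of points lying off the hyperplane. Write $\Pi$ for the hyperplane, $R = P \cap \Pi$ with $|R| = m := n - K$, and $Q = P \setminus \Pi$ with $|Q| = K$, and let $\mathrm{ord}(\cdot)$ denote the number of ordinary hyperplanes. First I would record the consequences of ``every $d$ points span a hyperplane'': no $d$ points of $P$ lie in a $(d-2)$-flat, so (since $d \ge 4$) no three points of $P$ are collinear, and $R$ is in \emph{strong general position} in $\Pi \cong \RR\PP^{d-1}$, meaning every $(d-2)$-flat of $\Pi$ meets $R$ in at most $d-1$ points. Since $m = n-K \ge 2d-1 > d$, the hyperplane $\Pi$ itself is not ordinary; and any ordinary hyperplane $H \ne \Pi$ meets $\Pi$ in a $(d-2)$-flat, hence meets $R$ in at most $d-1$ points and so contains at least one point of $Q$.

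For the base case $K=1$, let $q$ be the unique off-point. By the above, every ordinary hyperplane passes through $q$ and exactly $d-1$ points of $R$; conversely, for each $T \in \binom{R}{d-1}$ strong general position makes $\langle q,T\rangle$ ordinary, and distinct $T$ give distinct hyperplanes. Thus there are exactly $\binom{m}{d-1} = \binom{n-1}{d-1}$ ordinary hyperplanes, giving equality.

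For the inductive step ($K \ge 2$) I would delete one off-point $q$ to form $P^- = P\setminus\{q\}$, a set of $n-1$ points all but $K-1$ of which lie on $\Pi$; the inductive hypothesis applies (as $n-1 \ge 2d(K-1)$) and yields $\mathrm{ord}(P^-) \ge \binom{n-2}{d-1}$. Splitting the ordinary hyperplanes of $P$ by whether they contain $q$ gives the exact identity $\mathrm{ord}(P) = \mathrm{ord}(P^-) - a + c$, where $a$ counts ordinary hyperplanes of $P^-$ through $q$ and $c$ counts ordinary hyperplanes of $P$ through $q$. Projecting from $q$ (injective on $P^-$ as no three points are collinear) onto $\Pi$, the image $P_q = \pi_q(P^-) = R \sqcup Q'$ is an $(n-1)$-point set in general position in $\RR\PP^{d-1}$ in which $R$ keeps its strong general position and $|Q'| = K-1$; under this projection $c$ becomes the number of ordinary $(d-1)$-point hyperplanes of $P_q$ and $a$ the number of $d$-point hyperplanes of $P_q$. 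Since $\binom{n-1}{d-1} = \binom{n-2}{d-1} + \binom{n-2}{d-2}$, it suffices to prove $c - a \ge \binom{n-2}{d-2}$.

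The crux, and the main obstacle, is this inequality $c - a \ge \binom{n-2}{d-2}$, which amounts to controlling the ``rich'' hyperplanes of $P_q$. Here I would use that every hyperplane of $\RR\PP^{d-1}$ containing at least $d$ points of $P_q$ must contain a point of $Q'$, since $R$ is in strong general position. On one side, each $T \in \binom{R}{d-1}$ spans a distinct hyperplane meeting $R$ exactly in $T$, which is an ordinary hyperplane of $P_q$ unless it passes through a point of $Q'$, giving $c \ge \binom{m}{d-1} - \sum_{y \in Q'}\rho(y)$, where $\rho(y)$ counts hyperplanes through $y$ meeting $R$ in exactly $d-1$ points. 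On the other side, every $d$-point hyperplane contains a point of $Q'$, so $a \le \sum_{y \in Q'}\rho(y)$ plus a lower-order term from hyperplanes through two points of $Q'$. The key estimate is $\rho(y) \le \frac{1}{d-1}\binom{m}{d-2}$: any such hyperplane is spanned by $y$ together with any $d-2$ of its $d-1$ points of $R$ (general position of $P_q$ guarantees $y$ is not in the span of those $d-2$ points), so each is counted $d-1$ times among the $\binom{m}{d-2}$ choices. Feeding this in and using $n \ge 2dK$ — so that $\frac{K}{d}\binom{m}{d-2}$ is an $O(1/d)$ fraction of $\binom{m}{d-1}$ — yields $c - a \ge \binom{m}{d-1}\bigl(1 - O(1/d)\bigr) \gg \binom{n-2}{d-2}$, comfortably stronger than required and strict. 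With the inductive bound this gives $\mathrm{ord}(P) \ge \binom{n-1}{d-1}$, strict whenever $K \ge 2$, so equality holds if and only if $K = 1$. The delicate points to pin down are the exact projection correspondences defining $a$ and $c$, and checking that the lower-order $Q'$-terms are genuinely negligible under $n \ge 2dK$.
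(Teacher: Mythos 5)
Your route is genuinely different from the paper's, which uses neither induction nor projection. The paper classifies hyperplanes by the number $i$ of points of $P\setminus\Pi$ they contain: writing $m_i$ for the number of hyperplanes with exactly $d-1$ points of $P\cap\Pi$ and exactly $i$ points of $P\setminus\Pi$, it counts $d$-subsets of $P$ with exactly $d-1$ points on $\Pi$ to get $K\binom{n-K}{d-1}=\sum_i i\,m_i$, counts $d$-subsets with exactly $d-2$ points on $\Pi$ to get $\binom{K}{2}\binom{n-K}{d-2}\ge(d-1)\sum_i\binom{i}{2}m_i$, deduces $m_1\ge K\binom{n-K}{d-1}-\frac{K(K-1)}{d-1}\binom{n-K}{d-2}$, and checks that this exceeds $\binom{n-1}{d-1}$ for $K\ge 2$. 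That is a short global double count, against which your deletion identity, the projection dictionary for $a$ and $c$, the base case, and the reduction to $c-a\ge\binom{n-2}{d-2}$ are all correct but considerably heavier machinery for what is essentially the same estimate one dimension down.

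The one genuine soft spot is your dismissal of the hyperplanes of $P_q$ through two or more points of $Q'$ as ``a lower-order term''. The obvious bound on their number is $\binom{K-1}{2}\binom{n-3}{d-3}$ (two points of $Q'$ together with $d-3$ further points span the hyperplane), and in the extremal regime $n=2dK$ with $K$ large this is a constant fraction (roughly $e/8$) of $\binom{m}{d-1}$, not an $O(1/d)$ fraction; the error budget then closes only after careful constant-chasing, and your ``$\gg\binom{n-2}{d-2}$'' overstates the margin, since at $n=2dK$ one has $\binom{m}{d-1}$ only about $2Ke^{-1/2}$ times $\binom{n-2}{d-2}$. The clean fix is to note that each such $d$-point hyperplane arises at least $d-2$ times from the choices above, which restores a factor $1/(d-2)$ and makes the term honestly $O(1/d)$ of the main term. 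This multiplicity division is exactly the device in the paper's second count, where each hyperplane with $i$ off-points yields $(d-1)\binom{i}{2}$ of the $d$-subsets having $d-2$ points on $\Pi$. With that observation inserted, and the constants verified down to $d=4$ and $K=2$, your induction does go through.
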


\begin{proof}
Let $\Pi$ be a hyperplane with $|P \cap \Pi| = n - K$.
Since $n-K>d$, any ordinary hyperplane spanned by $P$ must contain at least one point not in $\Pi$.
Let $m_i$ be the number of hyperplanes containing exactly $d-1$ points of $P\cap\Pi$ and exactly $i$ points of $P\setminus\Pi$, $i=1,\dots,K$.
Then the number of unordered $d$-tuples of elements from $P$ with exactly $d-1$ elements in $\Pi$ is
\[ K\binom{n-K}{d-1} = m_1 + 2m_2 + 3m_3 + \dots + Km_K.\]

Now consider the number of unordered $d$-tuples of elements from $P$ with exactly $d-2$ elements in $\Pi$, which equals $\binom{K}{2}\binom{n-K}{d-2}$.
One way to generate such a $d$-tuple is to take one of the $m_i$ hyperplanes containing $i$ points of $P\setminus\Pi$ and $d-1$ points of $P\cap\Pi$, choose two of the $i$ points, and remove one of the $d-1$ points.
Since any $d$ points span a hyperplane, there is no overcounting.
This gives
\begin{align*}
\binom{K}{2}\binom{n-K}{d-2} &\ge (d-1)\left(\binom{2}{2}m_2+\binom{3}{2}m_3+\binom{4}{2}m_4 + \dotsb\right)\\
&\ge \frac{d-1}{2} (2m_2+3m_3+4m_4+\dotsb).
\end{align*}
Hence the number of ordinary hyperplanes is at least
\[ m_1\ge K\binom{n-K}{d-1}-\frac{K(K-1)}{d-1}\binom{n-K}{d-2} = K\binom{n-K}{d-1}\frac{n-2K-d+3}{n-K-d+2}.\]
We next show that for all $K\ge 2$, if $n \ge 2dK$ then
\[ K\binom{n-K}{d-1}\frac{n-2K-d+3}{n-K-d+2} > \binom{n-1}{d-1}.\]
This is equivalent to
\begin{equation}\label{ineq1}
K > \frac{n-K+1}{n-2K-d+3}\prod_{i=1}^{K-2}\frac{n-i}{n-d-i+1}.
\end{equation}
Note that
\begin{equation}\label{ineq2}
\frac{n-K+1}{n-2K-d+3} < 2
\end{equation}
if $n > 3K+2d-5$ and
\begin{equation}\label{ineq3}
\frac{n-i}{n-d-i+1} <\frac{i+2}{i+1}
\end{equation}
if $n\ge (i+2)d$ for each $i=1,\dots,K-2$.
However, since $2dK > (i+2)d$ and also $2dK > 4K+2d-5$, the inequality \eqref{ineq1} now follows from \eqref{ineq2} and \eqref{ineq3}.
\end{proof}

The second case needs more work.
We first consider the number of ordinary hyperplanes spanned by a coset of a subgroup of the smooth points $\delta^*$ of an elliptic normal curve or a rational acnodal curve.
By Propositions~\ref{prop:elliptic_group} and~\ref{prop:rational_group}, we can consider $\delta^*$  as a group isomorphic to either $\RR/\ZZ$ or $\RR/\ZZ \times \ZZ_2$.
Let $H \oplus x$ be a coset of a subgroup $H$ of $\delta^*$ of order $n$ where $(d+1)x = \ominus c \in H$.
Since $H$ is a subgroup of order $n$ of $\RR/\ZZ$ or $\RR/\ZZ \times \ZZ_2$, we have that either $H$ is cyclic, or $\ZZ_{n/2}\times\ZZ_2$ when $n$ is divisible by $4$.
The exact group will matter only when we make exact calculations.

Note that it follows from the group property that any $d$ points on $\delta^*$ span a hyperplane.
Also, since any hyperplane intersects $\delta^*$ in $d+1$ points, counting multiplicity, it follows that an ordinary hyperplane of $H\oplus x$ intersects $\delta^*$ in $d$ points, of which exactly one of them has multiplicity $2$, and the others multiplicity $1$.
Denote the number of ordered $k$-tuples $(a_1, \dotsc, a_k)$ with distinct $a_i \in H$ that satisfy $m_1 a_1 \oplus \dotsb \oplus m_k a_k = c$ by $[m_1, \dotsc, m_k; c]$.
Then the number of ordinary hyperplanes spanned by $H \oplus x$ is
\begin{equation}\label{expr1} \frac{1}{(d-1)!} [2, \!\!\underbrace{1, \dotsc, 1}_\text{$d-1$ times}\!; c].
\end{equation}
We show that we can always find a value of $c$ for which \eqref{expr1} is at most $\binom{n-1}{d-1}$.

\begin{lemma}\label{lem:coset1}
Let $\delta^*$ be an elliptic normal curve or the smooth points of a rational acnodal curve in $\RR\PP^d$, $d \ge 2$.
Then any finite subgroup $H$ of $\delta^*$ of order $n$ has a coset $H \oplus x$ with $(d+1)x \in H$, that spans at most $\binom{n-1}{d-1}$ ordinary hyperplanes.
Furthermore, if $d+1$ and $n$ are coprime, then any such coset spans exactly $\binom{n-1}{d-1}$ ordinary hyperplanes.
\end{lemma}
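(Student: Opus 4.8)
The plan is to translate the problem into the ordered solution-counts $[m_1,\dots,m_k;c]$ introduced just before the lemma and then argue by averaging over the admissible targets $c$. Recall from \eqref{expr1} that a coset $H\oplus x$ with $(d+1)x=\ominus c\in H$ spans exactly $N(c):=\frac{1}{(d-1)!}[2,1,\dots,1;c]$ ordinary hyperplanes (with $d-1$ ones). So the task splits into two: find one admissible $c$ with $N(c)\le\binom{n-1}{d-1}$, and show $N(c)=\binom{n-1}{d-1}$ for \emph{every} admissible $c$ when $\gcd(d+1,n)=1$. Here an admissible $c$ is one of the form $\ominus(d+1)x$ with $x\in\delta^*$, equivalently $c\in (d+1)\delta^*\cap H$.

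First I would compute $\sum_{c\in H}[2,1,\dots,1;c]$. Every ordered $d$-tuple $(a_1,\dots,a_d)$ of distinct elements of $H$ contributes to exactly one target, namely $c=2a_1\oplus a_2\oplus\dots\oplus a_d$, so this sum equals the number of such tuples, $n(n-1)\cdots(n-d+1)$. Hence $\sum_{c\in H}N(c)=\frac{1}{(d-1)!}n(n-1)\cdots(n-d+1)=n\binom{n-1}{d-1}$, and the average of $N(c)$ over $c\in H$ is \emph{exactly} $\binom{n-1}{d-1}$; in particular some $c\in H$ satisfies $N(c)\le\binom{n-1}{d-1}$. The second ingredient is a translation symmetry: for $t\in H$ the map $a_i\mapsto a_i\oplus t$ is a bijection of the distinct $d$-tuples that sends the target $c$ to $c\oplus(d+1)t$, so $[2,1,\dots,1;c]=[2,1,\dots,1;c\oplus(d+1)t]$ and $N$ is constant on the cosets of $(d+1)H$ in $H$. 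When $\gcd(d+1,n)=1$, multiplication by $d+1$ is a bijection of $H$, so $(d+1)H=H$, $N$ is constant, and its value must equal the average $\binom{n-1}{d-1}$; this yields the ``furthermore'' clause and, a fortiori, existence in the coprime case.

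For general existence I must ensure the minimising target is realised by an admissible $x$, i.e. that a good $c$ can be chosen in $(d+1)\delta^*\cap H$. By Propositions~\ref{prop:elliptic_group} and~\ref{prop:rational_group}, $\delta^*\cong\RR/\ZZ$ or $\RR/\ZZ\times\ZZ_2$. Whenever $\delta^*$ is divisible by $d+1$ — this covers the rational acnodal case and the one-component elliptic case (both $\cong\RR/\ZZ$), as well as every case with $d+1$ odd — one has $(d+1)\delta^*=\delta^*$, so every $c\in H$ is admissible and the averaging of the previous paragraph immediately produces an admissible $c$ with $N(c)\le\binom{n-1}{d-1}$.

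The step I expect to be the main obstacle is the two-component elliptic case $\delta^*\cong\RR/\ZZ\times\ZZ_2$ with $d+1$ even. There $(d+1)\delta^*$ misses the non-identity component, so the admissible targets are confined to the index-two subgroup $H_0=H\cap(\RR/\ZZ\times\{0\})$, and the average of $N$ restricted to $H_0$ need no longer equal $\binom{n-1}{d-1}$; indeed the parity of the $\ZZ_2$-coordinates couples to the sign of the relevant partition sums, so this restricted average can drift away from $\binom{n-1}{d-1}$. Handling this case therefore requires a finer analysis of the fluctuation of $N$ across $H_0$ — for instance a character-sum (Fourier) expansion of $[2,1,\dots,1;c]$ on $H$, in which the ``repeats-allowed'' count gives the constant main term and the distinctness corrections are controlled as lower-order contributions — in order to pin down a single admissible $c$ meeting the bound. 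This is where the real work lies, and I would isolate it as a separate argument rather than fold it into the clean averaging used for the other cases.
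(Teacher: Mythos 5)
Your core argument is the same as the paper's. The paper also computes $\sum_{c\in H}[2,1,\dots,1;c]=n(n-1)\dotsb(n-d+1)$ (via the substitution $b_i=a_i\ominus a_1$ and the sets $A_{c,j},B_k$, which is just a repackaging of your direct count) and then pigeonholes; and in the coprime case it likewise observes that the count is the same for every $c$, which is exactly your translation\-/invariance remark that $[2,1,\dots,1;c]$ depends only on the coset of $c$ modulo $(d+1)H$. Up to and including your treatment of the cases where $(d+1)\delta^*=\delta^*$, your proposal is correct and essentially identical to the paper's proof.

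The case you leave open --- $\delta^*\cong\RR/\ZZ\times\ZZ_2$, $d+1$ even, and $H$ meeting the non-identity component --- is a genuine gap in your proposal, but you should know two things about it. First, the paper's own proof does not close it either: the pigeonhole there runs over all $c\in H$ and tacitly assumes the minimising $c$ is realisable as $\ominus(d+1)x$, which is precisely what fails here, since the realisable targets form the index-two subgroup $H_0=H\cap(\RR/\ZZ\times\{0\})$. Second, your instinct that this is ``where the real work lies'' is, if anything, an understatement: the restricted averaging cannot be repaired by controlling fluctuations. Writing $\chi$ for the order-two character of $H$ with kernel $H_0$, one gets
\[
\sum_{c\in H_0}[2,\!\!\underbrace{1,\dots,1}_{d-1}\!;c]=\tfrac12 (n)_d+\tfrac12(n-d+1)\,(d-1)!\,(-1)^{\frac{d-1}{2}}\binom{n/2}{(d-1)/2},
\]
so for $d\equiv 1\pmod 4$ the average over the admissible targets strictly \emph{exceeds} $(d-1)!\binom{n-1}{d-1}$. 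Worse, when in addition $\gcd(d+1,n)=2$ (so $(d+1)H=H_0$), your own translation invariance forces $[2,1,\dots,1;c]$ to be \emph{constant} on $H_0$; for $d=5$, $n\equiv 2\pmod 6$ and $H=\langle(1/n,1)\rangle$ one computes this constant to be $4!\bigl(\binom{n-1}{4}+\frac{(n-2)(n-4)}{8}\bigr)$, so \emph{no} admissible coset meets the claimed bound. Thus this configuration cannot be handled by any refinement of the character-sum analysis you sketch; it has to be excluded at the level of the geometry (for a two-component real elliptic normal curve with $d+1$ even, the constant $e_0$ in ``$d+1$ points are cohyperplanar iff they sum to $e_0$'' admits a $\ZZ_2$-dichotomy analogous to the crunodal case of Proposition~\ref{prop:rational_group}, which interacts with which subgroups $H$ and which $x$ are actually admissible). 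Neither your proposal nor the paper's proof of Lemma~\ref{lem:coset1} carries out that analysis, so you have correctly isolated the one step that is genuinely missing --- but the fix must come from re-examining the set of admissible $(H,x)$ for such curves, not from a finer estimate of the fluctuation of $[2,1,\dots,1;c]$ over $H_0$.
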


\begin{proof}
It suffices to show that there exists $c \in H$ such that the number of solutions $(a_1, \dotsc, a_d)\in H^d$ of the equation $2a_1 \oplus a_2 \oplus \dotsb \oplus a_d = c$, where $c = \ominus(d+1)x$, is at most $(d-1)!\binom{n-1}{d-1}$.

Fix $a_1$ and consider the substitution $b_i = a_i - a_1$ for $i = 2, \dotsc, d$.
Note that $2a_1 \oplus \dotsb \oplus a_d = c$ and $a_1,\dots,a_d$ are distinct if and only if $b_2 \oplus \dotsb \oplus b_d = c \ominus (d+1)a_1$ and $b_2,\dots,b_d$ are distinct and non-zero.
Let
\[ A_{c,j} = \setbuilder{(j, a_2, \dotsc, a_d)}{2j \oplus a_2 \oplus \dotsb \oplus a_d = c, \text{$a_2, \dotsc, a_d \in H \setminus \{j\}$ distinct}}, \]
and let
\[ B_k = \setbuilder{(b_2, \dotsc, b_d)}{b_2 \oplus \dotsb \oplus b_d = k, \text{$b_2, \dotsc, b_d \in H \setminus \{0\}$ distinct}}. \]
Then $|A_{c,j}| = |B_{c \ominus (d+1)j}|$, and the number of ordinary hyperplanes spanned by $H \oplus x$ is
\[ \frac{1}{(d-1)!} \sum_{j\in H} |A_{c,j}|. \]

If $d+1$ is coprime to $n$, then $c \ominus (d+1)j$ runs through all elements of $H$ as $j$ varies.
So we have $\sum_j |B_{c \ominus (d+1)j}| = (n-1)\dotsb (n-d+1)$, hence for all $c$,
\[ \frac{1}{(d-1)!} \sum_{j\in H} |A_{c,j}| = \binom{n-1}{d-1}. \]

If $d+1$ is not coprime to $n$, then $c \ominus (d+1)j$ runs through a coset of a subgroup of $H$ of size $n/\gcd(d+1,n)$ as $j$ varies.
We now have
\[ \sum_{j \in H} |B_{c \ominus (d+1)j}| = \gcd(d+1,n) \sum_{k \in c \ominus (d+1)H} |B_k|. \]
Summing over $c$ gives
\begin{align*}
\sum_{c \in H} \sum_{j \in H} |A_{c,j}| &= \gcd(d+1,n) \sum_{c \in H} \sum_{k \in c \ominus (d+1)H} |B_k|\\
&= \gcd(d+1,n) \frac{n}{\gcd(d+1,n)} (n-1)\dotsb (n-d+1)\\
& = n (n-1)\dotsb (n-d+1).
\end{align*}
By the pigeonhole principle, there must then exist a $c$ such that
\[ \frac{1}{(d-1)!} \sum_{j\in H} |A_{c,j}| \le \binom{n-1}{d-1}. \qedhere \]
\end{proof}

We next want to show that $[2, \!\!\overbrace{1, \dotsc, 1}^\text{$d-1$ times}\!\!; c]$ is always very close to $(d-1)!\binom{n-1}{d-1}$, independent of $c$ or the group $H$.
Before that, we prove two simple properties of $[m_1, \dotsc, m_k; c]$.

\begin{lemma}\label{lem:upper}
$[m_1,\dots,m_k;c]\le 2m_k(k-1)!\binom{n}{k-1}$.
\end{lemma}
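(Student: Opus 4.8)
The plan is to bound the count by fixing all coordinates except the last and then counting admissible final coordinates. Recall that $[m_1,\dots,m_k;c]$ counts ordered tuples $(a_1,\dots,a_k)$ of \emph{distinct} elements of $H$ satisfying $m_1 a_1\oplus\dots\oplus m_k a_k=c$. First I would let $a_1,\dots,a_{k-1}$ range over all ordered $(k-1)$-tuples of distinct elements of $H$; there are exactly
\[ n(n-1)\dotsb(n-k+2)=(k-1)!\binom{n}{k-1} \]
such choices. Having fixed these, the defining equation reads $m_k a_k = b$ for the fixed element $b:=c\ominus m_1 a_1\ominus\dots\ominus m_{k-1}a_{k-1}$, and it remains only to bound the number of $a_k\in H$ solving this. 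I would relax the requirement that $a_k$ be distinct from $a_1,\dots,a_{k-1}$, since dropping it only increases the count and so preserves the upper bound.

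The key observation is that the fibre $\{a\in H: m_k a=b\}$ is either empty or a coset of the kernel $H[m_k]:=\{a\in H: m_k a=0\}$ of the multiplication-by-$m_k$ endomorphism, and hence has at most $|H[m_k]|$ elements. I would then bound $|H[m_k]|\le 2m_k$ using the structure of $H$ recorded earlier, namely that $H$ is either cyclic or isomorphic to $\ZZ_{n/2}\times\ZZ_2$. In the cyclic case $|H[m_k]|=\gcd(m_k,n)\le m_k$. In the case $H\cong\ZZ_{n/2}\times\ZZ_2$ the endomorphism acts factorwise, so its kernel is $(\ZZ_{n/2})[m_k]\times(\ZZ_2)[m_k]$ and
\[ |H[m_k]|=\gcd(m_k,n/2)\cdot\gcd(m_k,2)\le m_k\cdot 2=2m_k. \]
Either way $|H[m_k]|\le 2m_k$.

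Combining the two counts gives
\[ [m_1,\dots,m_k;c]\le (k-1)!\binom{n}{k-1}\cdot 2m_k=2m_k(k-1)!\binom{n}{k-1}, \]
as required. The only genuine content is the torsion bound $|H[m_k]|\le 2m_k$, which is the short case analysis above and is where the factor of $2$ enters (tightly, in the $\ZZ_{n/2}\times\ZZ_2$ case); everything else is bookkeeping. The mildest point to get right is that the relaxation of the distinctness condition on $a_k$ and the coset description of the fibre are both applied in the direction of an upper bound, but neither poses a real obstacle.
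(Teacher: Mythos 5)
Your proof is correct and follows essentially the same route as the paper's: fix $a_1,\dots,a_{k-1}$ in $(k-1)!\binom{n}{k-1}$ ways and bound the number of solutions of $m_k a_k = b$ by $m_k$ or $2m_k$ according to whether $H\cong\ZZ_n$ or $H\cong\ZZ_{n/2}\times\ZZ_2$. The only difference is that you spell out the kernel/coset computation that the paper leaves as an assertion.
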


\begin{proof}
Consider a solution $(a_1,\dotsc,a_k)$ of $m_1a_1 \oplus \dotsb \oplus m_ka_k=c$ where all the $a_i$ are distinct.
We can choose $a_1,\dotsc,a_{k-1}$ arbitrarily in $(k-1)!\binom{n}{k-1}$ ways, and $a_k$ satisfies the equation $m_ka_k = c \ominus m_1a_1 \ominus \dotsb \ominus m_{k-1}a_{k-1}$, which has at most $m_k$ solutions if $H=\ZZ_n$ and at most $2m_k$ solutions if $H=\ZZ_2\times\ZZ_{n/2}$.
\end{proof}

\begin{lemma}\label{lem:recurrence}
We have the recurrence relation
\begin{align}
[m_1,\dots,m_{k-1},1;c] = (k-1)!\binom{n}{k-1} &- [m_1 +1,m_2,\dots,m_{k-1};c]\notag\\
&- [m_1,m_2 +1,m_3,\dots,m_{k-1};c]\notag\\
&- \dotsb\notag\\ 
&- [m_1,\dots,m_{k-2},m_{k-1}+1;c]. \notag
\end{align}
\end{lemma}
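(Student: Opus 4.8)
The plan is to count solutions by conditioning on the first $k-1$ coordinates. Given distinct elements $a_1,\dots,a_{k-1}\in H$, the defining equation $m_1a_1\oplus\dotsb\oplus m_{k-1}a_{k-1}\oplus a_k=c$ determines $a_k$ \emph{uniquely}, because the coefficient of the last variable is $1$; explicitly $a_k=c\ominus m_1a_1\ominus\dotsb\ominus m_{k-1}a_{k-1}$. Thus $[m_1,\dots,m_{k-1},1;c]$ counts precisely those ordered tuples $(a_1,\dots,a_{k-1})$ of distinct elements of $H$ for which this forced value $a_k$ differs from every one of $a_1,\dots,a_{k-1}$.

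Next I would start from the total count. The number of ordered tuples $(a_1,\dots,a_{k-1})$ of distinct elements of $H$ is $n(n-1)\dotsb(n-k+2)=(k-1)!\binom{n}{k-1}$, and from this I subtract the tuples for which the forced $a_k$ collides with some $a_j$, $1\le j\le k-1$. Imposing $a_k=a_j$ and substituting into the equation merges the terms $m_ja_j\oplus a_k$ into $(m_j+1)a_j$, so the tuples with $a_k=a_j$ are exactly the distinct $(k-1)$-tuples enumerated by $[m_1,\dots,m_{j}+1,\dots,m_{k-1};c]$. Summing the contribution of each $j$ and subtracting yields the stated recurrence.

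The one point that genuinely needs care — and which I expect to be the only real obstacle — is that the collision events must be pairwise disjoint, so that the subtraction is a plain difference rather than an inclusion–exclusion alternating sum. This is immediate: since $a_1,\dots,a_{k-1}$ are already required to be distinct, the single value $a_k$ can coincide with at most one of them, so the bad tuples for distinct indices $j$ never overlap. I would also note, for bookkeeping, that each substitution $a_k=a_j$ produces a $(k-1)$-tuple whose entries $a_1,\dots,a_{k-1}$ are still distinct, so it lands correctly in the corresponding bracket, with no spurious degenerate configurations to discard. With disjointness established the recurrence follows directly.
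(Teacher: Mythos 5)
Your proof is correct and follows essentially the same route as the paper's: choose the distinct $a_1,\dots,a_{k-1}$ freely in $(k-1)!\binom{n}{k-1}$ ways, note that the unit coefficient forces $a_k$ uniquely, and subtract the tuples where $a_k$ collides with some $a_j$, each collision being counted by the bracket with $m_j$ incremented. Your explicit observation that the collision events are pairwise disjoint (so no inclusion--exclusion is needed) is a worthwhile detail that the paper leaves implicit.
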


\begin{proof}
We can arbitrarily choose distinct values from $H$ for $a_1,\dots,a_{k-1}$, which determines $a_k$, and then we have to subtract the number of $k$-tuples where $a_k$ is equal to one of the other $a_i$, $i=1,\dots,k-1$.
\end{proof}

\begin{lemma}\label{lem:2111}
\[ [2, \!\!\underbrace{1, \dotsc, 1}_\text{$d-1$ times}\!\!; c] =  (d-1)! \left( \binom{n-1}{d-1} + \eps(d,n)\right),\]
where
\[ |\eps(d,n)| = \begin{cases}
 O\left(2^{-d/2}\binom{n}{(d-1)/2}+\binom{n}{(d-3)/2}\right) & \text{if $d$ is odd,}\\
 O\left(d 2^{-d/2}\binom{n}{d/2-1}+\binom{n}{d/2-2}\right) & \text{if $d$ is even.}
\end{cases} \] 
\end{lemma}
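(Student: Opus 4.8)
The plan is to evaluate $[2,\underbrace{1,\dots,1}_{d-1};c]$ by inclusion--exclusion over the lattice of set partitions, which is exactly what one obtains by unwinding the recurrence of Lemma~\ref{lem:recurrence} completely. Writing the defining equation as $2a_1\oplus a_2\oplus\dots\oplus a_d=c$ and removing the distinctness requirement one collision at a time gives
\[ [2,\underbrace{1,\dots,1}_{d-1};c]=\sum_{\pi}\mu(\pi)\,N(\pi), \]
where $\pi$ ranges over all set partitions of $\{1,\dots,d\}$, $\mu(\pi)=\prod_{B\in\pi}(-1)^{|B|-1}(|B|-1)!$ is the M\"obius function of the partition lattice, and $N(\pi)$ counts the tuples that are constant on the blocks of $\pi$ and satisfy the equation. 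A block $B$ contributes the merged coefficient $c_B=\sum_{i\in B}m_i$, so $c_B=|B|$ for a block avoiding the first coordinate and $c_B=|B|+1$ for the block containing it; note $\sum_{B}c_B=d+1$.

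I would then evaluate $N(\pi)$ by elementary linear algebra rather than by iterating the recurrence: the map $H^{|\pi|}\to H$, $(b_B)\mapsto\bigoplus_B c_Bb_B$, is a homomorphism, so $N(\pi)$ is either $0$ or $n^{|\pi|-1}\sigma(\pi)$, where $\sigma(\pi)$ is the size of its kernel divided by $n^{|\pi|-1}$. For $H$ cyclic of order $n$ this gives $\sigma(\pi)=\gcd(\{c_B\}_{B\in\pi}\cup\{n\})$ when the equation is solvable, and for $H\cong\ZZ_2\times\ZZ_{n/2}$ the same up to a factor of at most $2$; in both cases $\sigma(\pi)\le 2\gcd_B c_B\le 2(d+1)$. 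The crucial point is that $\sigma(\pi)=1$ (and $N(\pi)=n^{|\pi|-1}$) whenever some block has $c_B=1$, i.e.\ whenever some coordinate other than the first is a singleton. For the resulting main term I would invoke the standard identity $\sum_{\pi}\mu(\pi)x^{|\pi|}=x(x-1)\cdots(x-d+1)$; setting $x=n$ and dividing by $n$ gives $\sum_\pi\mu(\pi)n^{|\pi|-1}=(n-1)\cdots(n-d+1)=(d-1)!\binom{n-1}{d-1}$, exactly the claimed main term. Hence
\[ (d-1)!\,\eps(d,n)=\sum_{\pi}\mu(\pi)\,n^{|\pi|-1}\bigl(\sigma(\pi)-1\bigr), \]
where only partitions with every $c_B\ge 2$ survive. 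Such a partition leaves none of the last $d-1$ coordinates in a singleton, so $|\pi|\le\lfloor(d+1)/2\rfloor$ and $n^{|\pi|-1}\le n^{\lfloor(d-1)/2\rfloor}$, which already explains the exponent in the error bound.

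The main obstacle is extracting the exponentially small factor $2^{-d/2}$ and the correct parity dependence, and I would handle this by a parity argument on the block sizes. Since $c_B=|B|$ except for the special block where $c_B=|B|+1$, all merged coefficients can be even only when the special block has odd size and all others even, which forces $d$ odd. Thus for $d$ odd the dominant contributions come from the partitions consisting of the first coordinate alone together with $(d-1)/2$ pairs: there are $(d-1)!!$ of these, each with $|\mu(\pi)|=1$, $n^{|\pi|-1}=n^{(d-1)/2}$, and $|\sigma(\pi)-1|=O(1)$, for a total of $O\bigl((d-1)!!\,n^{(d-1)/2}\bigr)$, which after dividing by $(d-1)!$ yields $O\bigl(2^{-d/2}\binom{n}{(d-1)/2}\bigr)$; partitions with one fewer block produce the stated lower-order term $\binom{n}{(d-3)/2}$. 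For $d$ even no all-even partition exists, so every surviving $\pi$ needs an odd common factor and hence strictly larger blocks; bounding these crudely by all partitions into blocks of size $\ge 2$ with $|\pi|\le d/2$ gives $O\bigl(d\,2^{-d/2}\binom{n}{d/2-1}+\binom{n}{d/2-2}\bigr)$. Throughout, Lemma~\ref{lem:upper} serves as a safety net for the small-$|\pi|$ tails, and the only genuinely delicate bookkeeping is pinning down these constants while treating the two group types $\ZZ_n$ and $\ZZ_2\times\ZZ_{n/2}$ simultaneously.
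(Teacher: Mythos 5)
Your proposal is correct in substance but reaches the estimate by a genuinely different route. The paper iterates the recurrence of Lemma~\ref{lem:recurrence} until it gets stuck at terms with no unit coefficient ($[2,\dotsc,2;c]$ for odd $d$, $[3,2,\dotsc,2;c]$ for even $d$), reads the main term off a telescoping alternating sum of binomial coefficients, and bounds the stuck terms crudely via Lemma~\ref{lem:upper}. You instead push the M\"obius inversion over the partition lattice all the way down to the \emph{unrestricted} counts $N(\pi)$, which you evaluate exactly by group theory (note these are not the terminal terms of the recurrence, which still carry a distinctness condition, so your decomposition is finer, not just a repackaging). This buys two things: the main term drops out in one line from $\sum_\pi\mu(\pi)n^{|\pi|-1}=(n-1)\dotsb(n-d+1)$, and the error is seen to be supported exactly on the partitions with $\gcd_Bc_B\ge 2$. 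Your dominant odd-$d$ contribution, the partitions $\{1\}$ plus a perfect matching of $\{2,\dotsc,d\}$, corresponds precisely to the $(d-2)(d-4)\dotsb 3\cdot 1$ copies of $[2,\dotsc,2;c]$ in the paper's proof; their number is $(d-2)!!$, not $(d-1)!!$ as you wrote, though dividing by $(d-1)!$ still gives $1/(d-1)!!=2^{-(d-1)/2}/(\tfrac{d-1}{2})!$, so the asymptotics are unaffected. For even $d$ your method is in fact sharper than the paper's: since $\sum_Bc_B=d+1$ is odd, a contributing partition needs $\gcd_Bc_B\ge3$, forcing every block to have size at least $3$ (at least $2$ for the block containing the first coordinate) and hence $|\pi|\le\lfloor(d+1)/3\rfloor$, which puts the error far below the stated $O\bigl(d2^{-d/2}\binom{n}{d/2-1}\bigr)$; your closing fallback to "all partitions into blocks of size $\ge2$ with $|\pi|\le d/2$" is both unnecessary and slightly lossy (it costs an extra factor of $d$), so keep the parity argument instead. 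The only work left implicit is the double-factorial bookkeeping showing that the surviving partitions with $|\pi|\le\lfloor(d-1)/2\rfloor$ contribute $O\bigl((d-1)!\binom{n}{\lfloor(d-3)/2\rfloor}\bigr)$ in total; this is routine and of exactly the same nature as the estimates the paper carries out for its term $A$.
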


\begin{proof}
Applying Lemma~\ref{lem:recurrence} once, we obtain
\[ [2, \!\!\underbrace{1, \dotsc, 1}_\text{$d-1$ times}\!\!; c] = (d-1)!\binom{n}{d-1} - [3, \!\!\underbrace{1, \dotsc, 1}_\text{$d-2$ times}\!\!; c] - (d-2)[2, 2, \!\!\underbrace{1, \dotsc, 1}_\text{$d-3$ times}\!\!; c]. \]
Note that at each stage of the recurrence in Lemma~\ref{lem:recurrence} (as long as it applies),
there are $(d-1)(d-2)\dotsb(d-k)$ terms of length $d-k$, where we define the \emph{length} of $[m_1, \dotsc, m_k; c]$ to be $k$.

If $d$ is odd, we can continue this recurrence until we reach
\begin{align*}
[2, \!\!\underbrace{1, \dotsc, 1}_\text{$d-1$ times}\!\!; c] &= (d-1)! \left( \binom{n}{d-1} - \binom{n}{d-2} + \dotsb + (-1)^{(d+1)/2} \binom{n}{(d+1)/2} \right)\\
&\qquad + (-1)^{(d-1)/2}R,
\end{align*}
where $R$ is the sum of $(d-1)(d-2)\dotsb(d-(d-1)/2)$ terms of length $(d+1)/2$.
Among these there are \[\frac{\binom{d-1}{2}\binom{d-3}{2}\dotsb\binom{2}{2}}{(\frac{d-1}{2})!} = (d-2)(d-4)\dotsb 3 \cdot1\] terms of the form $[2, \dotsc, 2; c]$.
We now write $R=A+B$, where $A$ is the same sum as $R$, except that we replace each occurrence of $[2,\dots,2;c]$ by $[1,\dots,1;c]$, and
\[ B := (d-2)(d-4)\dotsb 3\cdot 1 ([\underbrace{2,\dotsc,2}_\text{$\frac{d+1}{2}$ times};c] - [\!\underbrace{1,\dotsc,1}_\text{$\frac{d+1}{2}$ times}\!;c]).\]
We next bound $A$ and $B$.
We apply Lemma~\ref{lem:recurrence} to each term in $A$, after which we obtain $(d-1)(d-2)\dotsb(d-(d+1)/2)$ terms of length $(d-1)/2$.
Then using the bound in Lemma~\ref{lem:upper}, we obtain 
\begin{align*}
A &= (d-1)!\binom{n}{(d-1)/2} - O\left((d-1)(d-2)\dotsb(d-(d+1)/2)\left(\tfrac{d-3}{2}\right)!\binom{n}{(d-3)/2}\right)\\
&= (d-1)!\left(\binom{n}{(d-1)/2} - O\left(\binom{n}{(d-3)/2}\right) \right).
\end{align*}
For $B$, we again use Lemma~\ref{lem:upper} to get
\begin{align*}
|B| &= O\left((d-2)(d-4)\dotsb 3\cdot 1 \left(\frac{d-1}{2}\right)! \binom{n}{(d-1)/2} \right)\\
&= O\left((d-2)(d-4)\dotsb 3\cdot 1 \cdot 2^{-\frac{d-1}{2}}(d-1)(d-3)\dotsb 4 \cdot 2 \binom{n}{(d-1)/2} \right)\\
&= O\left((d-1)!2^{-\frac{d-1}{2}}\binom{n}{(d-1)/2}\right).
\end{align*}

Thus we obtain
\begin{multline*}
[2, \!\!\underbrace{1, \dotsc, 1}_\text{$d-1$ times}\!\!; c] = (d-1)! \left( \binom{n}{d-1} - \binom{n}{d-2} + \dotsb +(-1)^{\frac{d+1}{2}} \binom{n}{(d+1)/2} \right)\\
 + (-1)^{\frac{d-1}{2}}(d-1)!\left(\binom{n}{(d-1)/2} - O\left(\binom{n}{(d-3)/2}\right) \right) + (-1)^{\frac{d-1}{2}}B\\
= (d-1)!\left( \binom{n-1}{d-1} + (-1)^{\frac{d+1}{2}} O\left(\binom{n}{(d-3)/2}\right)\pm O\left(2^{-\frac{d-1}{2}}\binom{n}{(d-1)/2}\right)\right),
\end{multline*}
which finishes the proof for odd $d$.

If $d$ is even, we obtain
\begin{equation*}
[2, \!\!\underbrace{1, \dotsc, 1}_\text{$d-1$ times}\!\!; c] = (d-1)! \left( \binom{n}{d-1} - \binom{n}{d-2} + \dotsb + (-1)^{\frac{d}{2}+1} \binom{n}{d/2} \right)
 + (-1)^{d/2}R,
\end{equation*}
where $R$ now is the sum of $(d-1)(d-2)\dotsb(d-d/2)$ terms of length $d/2$.
Among these there are
\[ \frac{(d-1)\binom{d-2}{2}\binom{d-4}{2}\dotsb\binom{2}{2}}{(\frac{d-2}{2})!} + \frac{2\binom{d-1}{3}\binom{d-4}{2}\dotsb\binom{2}{2}}{(\frac{d-4}{2})!} = (d+1)(d-1)\dotsb7\cdot5\]
terms of the form $[3,2,\dots,2;c]$.
Again we write $R=A+B$, where $A$ is the same sum as $R$, except that each occurrence of $[3,2,\dots,2;c]$ is replaced by $[1,\dots,1;c]$, and
\[ B := (d+1)(d-1)\dotsb7\cdot5([3,\!\!\underbrace{2,\dotsc,2}_\text{$\frac{d}{2}-1$ times}\!\!;c] - [\underbrace{1,\dotsc,1}_\text{$\frac{d}{2}$ times};c]).\]
Similar to the previous case, we obtain
\[ A = (d-1)!\left(\binom{n}{d/2-1}-O\left(\binom{n}{d/2-2}\right)\right)\]
and \[ |B| = O\left((d+1)(d-1)\dotsb7\cdot5(\tfrac{d}{2}-1)!\binom{n}{d/2-1}\right) = O\left(2^{-d/2}d!\binom{n}{d/2-1}\right),\]
which finishes the proof for even $d$.
\end{proof}

Computing $[2, \dotsc, 2; c]$ and $[3, 2, \dotsc, 2; c]$ exactly is more subtle and depends on $c$ and the group $H$.
We do not need this for the asymptotic Theorems~\ref{thm:main2} and~\ref{thm:main3}, and will only need to do so when computing exact extremal values.

To show that a coset is indeed extremal, we first consider the effect of adding a single point.
The case where the point is on the curve is done in Lemma~\ref{lem:7.7-}, while Lemma~\ref{lem:7.7} covers the case where the point is off the curve.
We then obtain a more general lower bound in Lemma~\ref{lem:coset2}.

\begin{lemma}\label{lem:7.7-}
Let $\delta^*$ be an elliptic normal curve or the smooth points of a rational acnodal curve in $\RR\PP^d$, $d \ge 2$.
Suppose $H \oplus x$ is a coset of a finite subgroup $H$ of $\delta^*$ of order $n$, with $(d+1)x \in H$.
Let $p\in\delta^*\setminus (H\oplus x)$.
Then there are at least $\binom{n}{d-1}$ hyperplanes through $p$ that meet $H \oplus x$ in exactly $d-1$ points.
\end{lemma}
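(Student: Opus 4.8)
The plan is to show in fact that there are \emph{exactly} $\binom{n}{d-1}$ such hyperplanes, by exhibiting a bijection between them and the $(d-1)$-element subsets of $H\oplus x$. Throughout I would use Propositions~\ref{prop:elliptic_group} and~\ref{prop:rational_group}, which endow $\delta^*$ with a group structure in which $d+1$ points (with multiplicity) lie on a hyperplane if and only if they sum to the identity $0$, together with the observation recorded just before Lemma~\ref{lem:coset1} that any $d$ points of $\delta^*$ span a hyperplane.

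First I would fix a $(d-1)$-element subset $\{a_1,\dots,a_{d-1}\}$ of $H\oplus x$. Since $p\notin H\oplus x$, the points $p,a_1,\dots,a_{d-1}$ are $d$ distinct points of $\delta^*$, so they span a unique hyperplane $\Pi$. As $\delta$ is non-degenerate, $\Pi$ does not contain $\delta$, so $\Pi\cap\delta$ is an effective divisor of degree $d+1$; by the group law its residual point is
\[ q := \ominus(p\oplus a_1\oplus\dotsb\oplus a_{d-1}), \]
and the support of $\Pi\cap\delta$ is contained in $\{p,a_1,\dots,a_{d-1},q\}$.

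The crux is to verify that $q\notin H\oplus x$. Writing $a_i = x\oplus h_i$ with $h_i\in H$, I would compute
\[ q\ominus x = \ominus p\ominus dx\ominus(h_1\oplus\dotsb\oplus h_{d-1}). \]
Since $h_1\oplus\dotsb\oplus h_{d-1}\in H$, and since $(d+1)x\in H$ forces $\ominus dx\in x\oplus H$, this gives $q\ominus x\in H$ if and only if $p\ominus x\in H$, that is, if and only if $p\in H\oplus x$, which is false by hypothesis. Hence $q\notin H\oplus x$, so $\Pi$ meets $H\oplus x$ in exactly the $d-1$ points $a_1,\dots,a_{d-1}$: the point $q$ misses $H\oplus x$, and any tangency of $\Pi$ to $\delta$ can only occur at $p$, again since $q\notin H\oplus x$ while the $a_i$ are distinct.

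Finally, distinct $(d-1)$-element subsets produce distinct hyperplanes, because $\{a_1,\dots,a_{d-1}\}=\Pi\cap(H\oplus x)$ can be recovered from $\Pi$; this yields $\binom{n}{d-1}$ distinct hyperplanes through $p$ meeting $H\oplus x$ in exactly $d-1$ points, giving the claimed lower bound. I expect the only genuine obstacle to be the group-theoretic identity showing $q\notin H\oplus x$, where the condition $(d+1)x\in H$ is exactly what makes $\ominus dx$ land in $x\oplus H$; the remainder is bookkeeping about the degree-$(d+1)$ intersection divisor and its multiplicities.
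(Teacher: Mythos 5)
Your proposal is correct and follows essentially the same route as the paper: each $(d-1)$-element subset of $H\oplus x$ together with $p$ spans a hyperplane whose residual intersection point is forced off the coset by the same computation using $(d+1)x\in H$, and distinct subsets yield distinct hyperplanes. Your injectivity step (recovering the subset as $\Pi\cap(H\oplus x)$) is a slight streamlining of the paper's argument, but the substance is identical.
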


\begin{proof}
Take any $d-1$ points $p_1, \dotsc, p_{d-1} \in H \oplus x$.
Suppose that the (unique) hyperplane through $p,p_1,\dots,p_{d-1}$ contains another point $p'\in H\oplus x$.
Since $p\oplus p_1\oplus \dots \oplus p_{d-1}\oplus p' = 0$ by Propositions~\ref{prop:elliptic_group} and~\ref{prop:rational_group}, we obtain that $p\in H\ominus dx$.
Since $(d+1)x\in H$, we obtain $p\in H\oplus x$, a contradiction.
Therefore, the hyperplane through $p,p_1,\dots,p_{d-1}$ does not contain any other point of $H\oplus x$.

It remains to show that if $\{p_1,\dots,p_{d-1}\}\neq\{p_1',\dots,p_{d-1}'\}$ where also $p_1',\dots,p_{d-1}'\in H\oplus x$, then the two sets span different hyperplanes with $p$.
Suppose they span the same hyperplane.
Then $\ominus(p \oplus p_1 \oplus \dotsb \oplus p_{d-1})$ also lies on this hyperplane, but not in $H\oplus x$, as shown above.
Also, $p_i'\notin\{p_1,\dots,p_{d-1}\}$ for some $i$, and then $p_1,\dots,p_{d-1},p_i'$, and $\ominus(p \oplus p_1 \oplus \dotsb \oplus p_{d-1})$ are $d+1$ distinct points on a hyperplane, so their sum is $0$, which implies $p=p_i'$, a contradiction.

So there are $\binom{n}{d-1}$ hyperplanes through $p$ meeting $H \oplus x$ in exactly $d-1$ points.
\end{proof}

The following Lemma generalises \cite{GT13}*{Lemma~7.7}, which states that if $\delta^*$ is an elliptic curve or the smooth points of an acnodal cubic curve in the plane, $H\oplus x$ is a coset of a finite subgroup of order $n>10^4$, and if $p\notin\delta^*$, then there are at least $n/1000$ lines through $p$ that pass through exactly one element of $H\oplus x$.
A naive generalisation to dimension $3$ would state that if $\delta^*$ is an elliptic or acnodal space quartic curve with a finite subgroup $H$ of sufficiently large order $n$, and $x\in\delta^*$ and $p\notin\delta^*$, then there are $\Omega(n^2)$ planes through $p$ and exactly two elements of $H\oplus x$.
This statement is false, even if we assume that $4x\in H$ (the analogous assumption $3x\in H$ is not made in \cite{GT13}), as can be seen from the following example.

Let $\delta$ be an elliptic quartic curve obtained from the intersection of a circular cylinder in $\RR^3$ with a sphere which has centre $c$ on the axis $\ell$ of the cylinder.
Then $\delta$ is symmetric in the plane through $c$ perpendicular to $\ell$, and we can find a finite subgroup $H$ of any even order $n$ such that the line through any element of $H$ parallel to $\ell$ intersects $H$ in two points.
If we now choose $p$ to be the point at infinity on $\ell$, then we obtain that any plane spanned by $p$ and two points of $H$ not collinear with $p$, intersects $H$ in two more points.
Note that the projection $\pi_p$ maps $\delta$ to a conic, so is not generically one-to-one.
The number of such $p$ is bounded by the trisecant lemma (Lemma~\ref{lem:projection2}).
However, as the next lemma shows, a generalisation of \cite{GT13}*{Lemma~7.7} holds except that in dimension~3 we have to exclude such points $p$.
\begin{lemma}\label{lem:7.7}
Let $\delta$ be an elliptic normal curve or a rational acnodal curve in $\RR\PP^d$, $d \ge 2$, and let $\delta^*$ be its set of smooth points.
Let $H$ be a finite subgroup of $\delta^*$ of order $n$, where $n\ge Cd^4$ for some sufficiently large absolute constant $C>0$.
Let $x\in\delta^*$ satisfy $(d+1)x\in H$.
Let $p\in\RR\PP^d\setminus\delta^*$.
If $d=3$, assume furthermore that $\delta$ is not contained in a quadric cone with vertex $p$.
Then there are at least $c\binom{n}{d-1}$ hyperplanes through $p$ that meet the coset $H \oplus x$ in exactly $d-1$ points, for some sufficiently small absolute constant $c>0$.
\end{lemma}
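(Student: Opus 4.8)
The plan is to prove the lemma by induction on the dimension $d$, projecting from a point of the coset rather than from $p$. The base case $d=2$ is \cite{GT13}*{Lemma~7.7} together with its straightforward adaptation to the acnodal cubic (where the group is $\RR/\ZZ$, exactly as in the elliptic case). For the inductive step I fix a point $q\in H\oplus x$ and project from it. Since $\delta$ is non-degenerate of degree $d+1$ in $\RR\PP^d$ and $q$ is a smooth point, the image $\delta'':=\overline{\pi_q(\delta\setminus\{q\})}$ is non-degenerate of degree $d$ in $\RR\PP^{d-1}$: indeed $\pi_q$ must be generically one-to-one, because a generically two-to-one projection would have image of degree at most $d/2<d-1$, impossible for a non-degenerate curve in $\RR\PP^{d-1}$ when $d\ge 3$. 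This is the crucial observation that eliminates the $O((d+1)^4)$ exceptional centres of Lemma~\ref{lem:projection1} and keeps the discarded set small. By Propositions~\ref{prop:curves}, \ref{prop:elliptic_group} and~\ref{prop:rational_group}, $\delta''$ is again an elliptic normal curve or a rational acnodal curve (its unique singularity being the image of the acnode), and $\pi_q$ induces an isomorphism $\delta^*\to\delta''^*$ that is a group homomorphism up to a translation.

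The group bookkeeping is where the choice $q\in H\oplus x$ is essential. Writing the induced map as $a\mapsto\rho(a)\oplus'' t$, where $\rho\colon\delta^*\to\delta''^*$ is a homomorphism and $t=\pi_q(0)$, the cohyperplane relation $\pi_q(a_1)\oplus''\dots\oplus''\pi_q(a_d)=0''$ in $\RR\PP^{d-1}$ translates into $a_1\oplus\dots\oplus a_d=\ominus q$ in $\delta^*$; comparing these forces $\rho(q)=\underbrace{t\oplus''\dots\oplus''t}_{d}$. It follows that $\pi_q(H\oplus x)=H''\oplus'' x''$ with $H''=\rho(H)$ of order $n$ and $x''=\rho(x)\oplus'' t$, and a short computation using $q=h_0\oplus x$ with $h_0\in H$ and the hypothesis $(d+1)x\in H$ shows that the $d$-fold sum $\underbrace{x''\oplus''\dots\oplus'' x''}_{d}$ equals $\rho\bigl((d+1)x\oplus h_0\bigr)\in H''$. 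Thus the projected data satisfy the hypotheses of the lemma in dimension $d-1$ exactly, and $\pi_q(p)\notin\delta''^*$ for all but the $O(d^2)$ centres $q$ lying on a secant of $\delta$ through $p$ (for $d\ge 4$ there are only $O(d^2)$ such secants, since $\pi_p$ is itself generically one-to-one by the same degree argument).

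Now I set up a double count. Applying the induction hypothesis at $\pi_q(p)$ yields at least $c_{d-1}\binom{n}{d-2}$ hyperplanes of $\RR\PP^{d-1}$ through $\pi_q(p)$ meeting $H''\oplus'' x''$ in exactly $d-2$ points. Pulling back through $\pi_q$, these biject with hyperplanes of $\RR\PP^d$ through both $p$ and $q$ meeting $H\oplus x$ in exactly $d-1$ points (the $d-2$ projected points together with $q$ itself). Summing over the at least $n-O(d^2)$ admissible centres $q$, and noting that each hyperplane through $p$ meeting the coset in exactly $d-1$ points is counted once for each of its $d-1$ coset points, I obtain
\[ (d-1)N \ge \bigl(n-O(d^2)\bigr)\,c_{d-1}\binom{n}{d-2}, \]
where $N$ is the desired quantity. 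Since $\binom{n}{d-2}=\tfrac{d-1}{n-d+2}\binom{n}{d-1}$, this gives $N\ge c_{d-1}\bigl(1-O(d^2/n)\bigr)\binom{n}{d-1}$, so that $c_d=c_{d-1}\bigl(1-O(d^2/n)\bigr)$. Because $n\ge Cd^4$, the accumulated loss $\prod_{j=3}^d\bigl(1-O(j^2/n)\bigr)$ is bounded below by an absolute constant, as $\sum_{j\le d}j^2/n=O(d^3/n)=O(1/d)$; hence $c_d$ stays above a fixed $c>0$ for all $d$.

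The main obstacle is the descent into dimension $3$, that is, the case $d=4$, where the induction hypothesis carries the extra assumption that $\delta''$ is not contained in a quadric cone with vertex $\pi_q(p)$. A first-species space quartic lies on only finitely many quadric cones (the singular members of its pencil of quadrics), so I must verify that $\pi_q(p)$ avoids these finitely many vertices for all but boundedly many $q$; equivalently, that $\delta$ fails to project two-to-one onto a conic from the line $\langle p,q\rangle$ for all but a bounded set of centres $q$. Establishing this bound, together with confirming that the isomorphism $\pi_q\colon\delta^*\to\delta''^*$ preserves the acnodal rather than crunodal type and that the boundedly many exceptional centres may indeed be discarded, are the technical points requiring care. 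The range $d\ge 5$ is free of the quadric-cone caveat and is routine once the group computation above is in place.
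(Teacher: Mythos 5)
Your strategy is the same as the paper's: induct on $d$, project from a point $q$ of the coset $H\oplus x$, use the degree argument to see that $\pi_q$ is generically one-to-one so that $\pi_q(\delta)$ is again elliptic or acnodal of degree $d$, transfer the coset condition through the translated group isomorphism (Proposition~\ref{prop:unique}), discard the $O(d^2)$ centres $q$ for which $pq$ meets $\delta$ with multiplicity $2$ (bounded via $\pi_p$ and the Pl\"ucker formulas), and double-count with a factor of $d-1$; your group bookkeeping showing $dx''\in H''$ is correct and matches the paper. The genuine gap is the step you explicitly defer: the passage through $d=4$, where you must show that $\pi_q(p)$ is the vertex of a quadric cone containing $\pi_q(\delta)$ for only boundedly many $q$. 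This is the one place in the inductive step requiring a new idea, and without it the induction cannot reach any $d\ge 4$. The paper closes it by noting that projecting $\delta$ from $q$ and then $\pi_q(\delta)$ from $\pi_q(p)$ is the same two-step projection (from the line spanned by $p$ and $q$) as projecting $\delta$ from $p$ and then $\pi_p(\delta)$ from $\pi_p(q)$; so a bad $q$ is one for which the projection of the quintic $\pi_p(\delta)\subset\RR\PP^3$ from the point $\pi_p(q)$ on it fails to be generically one-to-one, and there are only $O(1)$ such points by the quantitative trisecant lemma (Lemma~\ref{lem:projection1}). Some argument of this kind must be supplied; "finitely many singular quadrics in the pencil" by itself does not bound the number of centres $q$ whose associated vertex happens to be $\pi_q(p)$.

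A second, smaller inaccuracy: your pullback is not a bijection onto hyperplanes meeting $H\oplus x$ in exactly $d-1$ points. Extending $\pi_q$ to $q$ sends $q$ to the tangential point, which lies in $\pi_q(H\oplus x)$; a hyperplane of $\RR\PP^{d-1}$ through $\pi_q(p)$ and exactly $d-2$ points of the projected coset, one of which is this tangential point, pulls back to a hyperplane through $p$ that is tangent to $\delta$ at $q$ and hence meets $H\oplus x$ in only $d-2$ points. One must subtract the at most $\binom{n-1}{d-3}$ such hyperplanes for each $q$ before summing; this is a lower-order correction absorbed by $n\ge Cd^4$, exactly as in the paper, but as written your count of ``exactly $d-1$ points'' is unjustified. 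The remaining point you flag, that the acnode projects to an acnode, is immediate since a generically one-to-one projection from a smooth real point sends the isolated real singular point to an isolated real singular point.
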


\begin{proof}
We prove by induction on $d$ that under the given hypotheses there are at least $c'\prod_{i=2}^d(1-\frac{1}{i^2})\binom{n}{d-1}$ such hyperplanes for some sufficiently small absolute constant $c'>0$.
The base case $d = 2$ is given by \cite{GT13}*{Lemma~7.7}.

Next assume that $d\ge 3$, and that the statement holds for $d-1$.
Fix a $q\in H\oplus x$, and consider the projection $\pi_q$.
Since $q$ is a smooth point of $\delta$, $\overline{\pi_q(\delta\setminus\{q\})}$ is a non-degenerate curve of degree $d$ in $\RR\PP^{d-1}$ (otherwise its degree would be at most $d/2$, but a non-degenerate curve has degree at least $d-1$).
The projection $\pi_q$ can be naturally extended to have a value at $q$, by setting $\pi_q(q)$ to be the point where the tangent line of $\delta$ at $q$ intersects the hyperplane onto which $\delta$ is projected.
(This point is the single point in $\overline{\pi_q(\delta\setminus\{q\})}\setminus\pi_q(\delta\setminus\{q\})$.)
The curve $\pi_q(\delta)$ has degree $d$ and is either elliptic or rational and acnodal, hence it has a group operation $\boxplus$ such that $d$ points are on a hyperplane in $\RR\PP^{d-1}$ if and only if they sum to the identity.

Observe that any $d$ points $\pi_q(p_1),\dots,\pi_q(p_d)\in\pi_q(\delta^*)$ lie on a hyperplane in $\RR\PP^{d-1}$ if and only if $p_1\oplus\dots\oplus p_d \oplus q = 0$.
By Proposition~\ref{prop:unique} it follows that the group on $\pi_q(\delta^*)$ obtained by transferring the group $(\delta^*,\oplus)$ by $\pi_q$ is a translation of $(\pi_q(\delta^*),\boxplus)$.
In particular, $\pi_q(H\oplus x) = H'\boxplus x'$ for some subgroup $H'$ of $(\pi_q(\delta^*),\boxplus)$ of order $n$, and $(d+1)x'\in H'$.

We would like to apply the induction hypothesis, but we can only do that if $\pi_q(p)\notin\pi_q(\delta^*)$, and when $d=4$, if $\pi_q(p)$ is not the vertex of a quadric cone containing $\pi_q(\delta)$.
We next show that there are only $O(d^2)$ exceptional points $q$ to which we cannot apply induction.

Note that $\pi_q(p)\in\pi_q(\delta^*)$ if and only if the line $pq$ intersects $\delta$ with multiplicity $2$, which means we have to bound the number of these lines through $p$.
To this end, we consider the projection of $\delta$ from the point $p$.
Suppose that $\pi_p$ does not project $\delta$ generically one-to-one to a degree $d+1$ curve in $\RR\PP^{d-1}$.
Then $\pi_p(\delta)$ has degree at most $(d+1)/2$.
However, its degree is at least $d-1$ because it is non-degenerate.
It follows that $d=3$, and that $\pi_p(\delta)$ has degree $2$ and is irreducible, so $\delta$ is contained in a quadric cone with vertex $p$, which we ruled out by assumption.

Therefore, $\pi_p$ projects $\delta$ generically one-to-one onto the curve $\pi_p(\delta)$, which has degree $d+1$ and has at most $\binom{d}{2}$ double points (this follows from the Pl\"ucker formulas after projecting to the plane \cite{W78}*{Chapter~III, Theorem~4.4}).
We thus have that an arbitrary point $p \in \RR\PP^d \setminus \delta$ lies on at most $O(d^2)$ secants or tangents of $\delta$ (or lines through two points of $\delta^*$ if $p$ is the acnode of $\delta$).

If $d=4$, we also have to avoid $q$ such that $\pi_q(p)$ is the vertex of a cone on which $\pi_q(\delta)$ lies.
Such $q$ have the property that if we first project $\delta$ from $q$ and then $\pi_q(\delta)$ from $\pi_q(p)$, then the composition of these two projections is not generically one-to-one.
Another way to do these to successive projections is to first project $\delta$ from $p$ and then $\pi_p(\delta)$ from $\pi_p(q)$.
Thus, we have that $\pi_p(q)$ is a point on the quintic $\pi_p(\delta)$ in $\RR\PP^3$ such that the projection of $\pi_p(\delta)$ from $\pi_p(q)$ onto $\RR\PP^2$ is not generically one-to-one.
However, there are only $O(1)$ such points by Lemma~\ref{lem:projection2}.
Thus there are at most $Cd^2$ points $q\in H\oplus x$ to which we cannot apply the induction hypothesis.

For all remaining $q\in H\oplus x$, we obtain by the induction hypothesis that there are at least $c'\prod_{i=2}^{d-1}(1-\frac{1}{i^2})\binom{n}{d-2}$ hyperplanes $\Pi$ in $\RR\PP^{d-1}$ through $\pi_q(p)$ and exactly $d-2$ points of $H'\boxplus x'$.
If none of these $d-2$ points equal $\pi_q(q)$, then $\pi_q^{-1}(\Pi)$ is a hyperplane in $\RR\PP^d$ through $p$ and $d-1$ points of $H\oplus x$, one of which is $q$.
There are at most $\binom{n-1}{d-3}$ such hyperplanes in $\RR\PP^{d-1}$ through $\pi_q(q)$.
Therefore, there are at least $c'\prod_{i=2}^{d-1}(1-\frac{1}{i^2})\binom{n}{d-2} - \binom{n-1}{d-3}$ hyperplanes in $\RR\PP^d$ that pass through $p$ and exactly $d-1$ points of $H\oplus x$, one of them being $q$.
If we sum over all $n-Cd^2$ points $q$, we count each hyperplane $d-1$ times, and we obtain that the total number of such hyperplanes is at least
\begin{equation}\label{eq1}
\frac{n-Cd^2}{d-1}\left(c'\prod_{i=2}^{d-1}\left(1-\frac{1}{i^2}\right)\binom{n}{d-2} - \binom{n-1}{d-3}\right).
\end{equation}
It can easily be checked that 
\begin{equation}\label{eq2}
\frac{n-Cd^2}{d-1}\binom{n}{d-2} \ge \left(1 - \frac{1}{2d^2}\right)\binom{n}{d-1}
\end{equation}
if $n >2Cd^4$, and that
\begin{equation}\label{eq3}
c'\prod_{i=2}^{d-1}\left(1-\frac{1}{i^2}\right)\frac{1}{2d^2}\binom{n}{d-1} \ge  \frac{n-Cd^2}{d-1}\binom{n-1}{d-3}
\end{equation}
if $n > 4d^3/c'$.
It now follows from \eqref{eq2} and \eqref{eq3} that the expression \eqref{eq1} is at least 
\[ c'\prod_{i=2}^{d}\left(1-\frac{1}{i^2}\right)\binom{n}{d-1},\]
which finishes the induction.
\end{proof}

\begin{lemma}\label{lem:coset2}
Let $\delta^*$ be an elliptic normal curve or the smooth points of a rational acnodal curve in $\RR\PP^d$, $d \ge 4$, and let $H \oplus x$ be a coset of a finite subgroup $H$ of $\delta^*$, with $(d+1)x\in H$.
Let $A\subseteq H\oplus x$ and $B\subset\RR\PP^d\setminus(H\oplus x)$ with $|A|=a$ and $|B|=b$.
Let $P = (H \oplus x \setminus A) \cup B$ with $|P|=n$ be such that every $d$ points of $P$ span a hyperplane.
If $A$ and $B$ are not both empty and $n\ge C(a+b+d^2)d$ for some sufficiently large absolute constant $C>0$, then $P$ spans at least $(1+c)\binom{n-1}{d-1}$ ordinary hyperplanes for some sufficiently small absolute constant $c>0$.
\end{lemma}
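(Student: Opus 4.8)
Write $Q = H\oplus x$, let $m=|H|$, so that $n=m-a+b$. The reference count is that by Lemma~\ref{lem:2111} the coset $Q$ spans $O_Q=\binom{m-1}{d-1}+\eps(d,m)$ ordinary hyperplanes, with $|\eps|$ negligible. The plan is to classify the ordinary hyperplanes of $P$ relative to $Q$, using the key structural fact (from $(d+1)x\in H$ and Propositions~\ref{prop:elliptic_group} and~\ref{prop:rational_group}) that whenever a hyperplane meets $\delta$ in $d$ distinct points of $Q$, its $(d+1)$-th intersection with $\delta$ also lies in $Q$. Consequently every ordinary hyperplane of $Q$ is \emph{tangent} to $\delta$, meeting it in one double point and $d-1$ simple points, all in $Q$. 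An ordinary hyperplane of $P$ must then fall into one of three types: (i) a tangent hyperplane whose $d$ points all lie in $Q\setminus A$ and which avoids $B$ (inherited from $Q$); (ii) a hyperplane meeting $Q$ in $d+1$ distinct points, exactly one of which is in $A$, and avoiding $B$ (created by the removal); or (iii) a hyperplane through at least one point of $B$. Hyperplanes meeting $\delta$ in fewer than $d$ distinct points cannot be ordinary, so this list is exhaustive.

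\textbf{The gains.} First I would lower-bound type (iii): each $p\in B$ lies off the coset, so Lemma~\ref{lem:7.7} (if $p\notin\delta^{*}$; here $d\ge 4$ removes the quadric-cone caveat) or Lemma~\ref{lem:7.7-} (if $p\in\delta^{*}\setminus Q$) produces at least $c\binom{m}{d-1}$ hyperplanes through $p$ meeting $Q$ in exactly $d-1$ points. Discarding those that also meet $A$ or a second point of $B$ costs $O\bigl((a+b)\binom{m}{d-2}\bigr)$ per point, so type (iii) contributes at least $cb\binom{m}{d-1}-O\bigl(b(a+b)\binom{m}{d-2}\bigr)$. Next, type (ii): removing a point $q_0=h_0\oplus x$ turns every $(d+1)$-point hyperplane of $Q$ through $q_0$ into an ordinary hyperplane of $P$ (provided it avoids the rest of $A$ and all of $B$). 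The number of such hyperplanes through a fixed $q_0$ equals $\tfrac{1}{d!}$ times the count of ordered $d$-tuples of distinct elements of $H\setminus\{h_0\}$ summing to $\ominus(d+1)x\ominus h_0$, which by the counting behind Lemmas~\ref{lem:upper}, \ref{lem:recurrence} and~\ref{lem:2111} is $(1+o(1))\tfrac1d\binom{m-1}{d-1}$. After an inclusion–exclusion removing pairs inside $A$ and hyperplanes through $B$, type (ii) contributes at least $\tfrac{a}{d}\binom{m-1}{d-1}(1-o(1))-O\bigl((a^2+ab)\binom{m}{d-2}\bigr)$.

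\textbf{Losses and interactions.} Type (i) equals $O_Q$ minus the ordinary hyperplanes of $Q$ destroyed by the modification. An ordinary (hence tangent) hyperplane of $Q$ through a fixed point of $A$ is counted by choosing its tangent point and $d-2$ further coset points, giving $O\bigl(\binom{m}{d-2}\bigr)$; for a point of $B$ lying on $\delta^{*}$ this number is in fact $0$, since a tangent hyperplane already meets $\delta$ in $d+1$ points with multiplicity, while for a point of $B$ off $\delta^{*}$ I would bound it by $O\bigl(\binom{m}{d-2}\bigr)$ by projecting from that point and invoking the degree of $\overline{\pi_p(\delta)}$ (as in Lemma~\ref{lem:7.7}). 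The total loss is thus $O\bigl((a+b)\binom{m}{d-2}\bigr)$. Finally, I would absorb the binomial adjustment $\binom{m-1}{d-1}-\binom{n-1}{d-1}=O\bigl(\tfrac{(a+b)d}{m}\binom{m-1}{d-1}\bigr)$ and all the $\binom{m}{d-2}$-error terms using $n\ge C(a+b+d^2)d$, which forces $m\gg d^2$ and $(a+b)d\ll m$.

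\textbf{Assembling and the main obstacle.} Combining the three types gives, for a suitable small $c>0$,
\[ \#\{\text{ordinary hyperplanes of }P\}\ \ge\ \binom{n-1}{d-1}+\Omega\!\left(\frac{a}{d}\binom{m-1}{d-1}+b\binom{m}{d-1}\right), \]
and the error and adjustment terms are dominated by the right-hand gain, yielding $(1+c)\binom{n-1}{d-1}$. The step I expect to be the main obstacle is making this gain a \emph{definite positive fraction} of $\binom{n-1}{d-1}$ uniformly. When $b\ge 1$ the contribution $b\binom{m}{d-1}$ is already a constant fraction, so the addition case is comparatively safe. The delicate case is the pure removal case $b=0$, $a\ge 1$: here each removed point yields only a $\Theta(1/d)$ fraction of $\binom{n-1}{d-1}$, so one must show this still strictly beats the binomial decrease from $m$ down to $n=m-a$ (also of order $\tfrac{ad}{m}\binom{m-1}{d-1}$, hence smaller precisely because $m\gg d^{2}$), while simultaneously controlling the pairwise interaction terms among points of $A$ so they cannot cancel the gain. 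Establishing the off-curve loss bound for points of $B$ via projection is the second point requiring care.
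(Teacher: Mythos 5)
Your decomposition of the ordinary hyperplanes of $P$ (those inherited from the coset, those created by deleting $A$, those through a point of $B$) and your main tools (Lemmas~\ref{lem:7.7-} and~\ref{lem:7.7} for the points of $B$, the $[m_1,\dotsc,m_k;c]$ counting machinery on the coset, and a tangency/degree bound for the inherited hyperplanes destroyed by $B$) are exactly those of the paper's proof, and the individual estimates you sketch are workable; your projection-based bound for the loss at points of $B\setminus\delta^*$ is a legitimate substitute for the paper's argument via the flat spanned by $q$, $p_1$ and the tangent line at $p_1$, and your observation that a point of $B\cap\delta^*$ lies on no tangent hyperplane of the coset is correct.

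The genuine gap is the last step. Your gain is $\Omega\bigl(\tfrac{a}{d}\binom{m-1}{d-1}+b\binom{m}{d-1}\bigr)$, and when $b=0$ and $a=1$ this is only a $\Theta(1/d)$ fraction of $\binom{n-1}{d-1}$; ``strictly beating the binomial decrease'', as you propose, gives more than $\binom{n-1}{d-1}$ but not $(1+c)\binom{n-1}{d-1}$ for an \emph{absolute} constant $c$, which is what the lemma asserts (and the paper states explicitly that its constants carry no hidden dependence on $d$). So as written your argument yields at best $(1+\Omega(1/d))\binom{n-1}{d-1}$ in the pure-removal case. The paper reaches the full constant because it counts the ordinary hyperplanes meeting $A$ as $a\bigl(\binom{n-b}{d-1}-a\binom{n-b}{d-2}-(n-b)\binom{n-b-1}{d-3}\bigr)$, i.e.\ one hyperplane for every pair consisting of a $p\in A$ and a $(d-1)$-subset of $(H\oplus x)\setminus A$, without dividing by the multiplicity with which each hyperplane is produced. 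Your normalised count --- $\tfrac1d$ of that, since each such hyperplane contains exactly $d$ coset points and hence arises from $d$ different $(d-1)$-subsets --- is the count of \emph{distinct} hyperplanes, and it shows that the number of $(d+1)$-point hyperplanes of a coset through a fixed point is $\sim\tfrac1d\binom{m-1}{d-1}$, so the discrepancy with the paper is a genuine factor of $d$ that you must confront rather than elide. You should either justify the paper's unnormalised count (which I do not see how to do) or prove and use the weaker conclusion $(1+c/d)\binom{n-1}{d-1}$, which still suffices for the applications in Theorems~\ref{thm:main2} and~\ref{thm:main3}; what you cannot do is assert, as your final display does, that a gain of order $\tfrac{a}{d}\binom{m-1}{d-1}$ dominates $c\binom{n-1}{d-1}$ uniformly in $d$.
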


\begin{proof}
We first bound from below the number of ordinary hyperplanes of $(H\oplus x)\setminus A$ that do not pass through a point of $B$.

The number of ordinary hyperplanes of $(H\oplus x)\setminus A$ that are disjoint from $A$ is
\[ \frac{1}{(d-1)!}\left|\setbuilder{(a_1,\dots,a_d)\in(H\setminus(A\ominus x))^d}{\begin{array}{c}2a_1\oplus a_2 \oplus\dotsb \oplus a_d=\ominus (d+1)x,\\ \text{$a_1,\dots,a_d$ are distinct}\end{array}}\right|.\]
 If we denote by by $[m_1, \dotsc, m_k]'$ the number of ordered $k$-tuples $(a_1, \dotsc, a_k)$ with distinct $a_i \in H\setminus(A\ominus x)$ that satisfy $m_1 a_1 \oplus \dotsb \oplus m_k a_k = \ominus (d+1)x$, then we obtain, similar to the proofs of Lemmas~\ref{lem:upper} and \ref{lem:recurrence}, that
\begin{align*}
[2, \!\!\underbrace{1, \dotsc, 1}_\text{$d-1$ times}\!]' &= (d-1)!\binom{n-b}{d-1} - [3, \!\!\underbrace{1, \dotsc, 1}_\text{$d-2$ times}\!]' - (d-2)[2, 2, \!\!\underbrace{1, \dotsc, 1}_\text{$d-3$ times}\!]'\\
&\ge  (d-1)!\binom{n-b}{d-1} - 2(d-2)!\binom{n-b}{d-2}-2(d-2)(d-2)!\binom{n-b}{d-2}\\
&= (d-1)!\binom{n-b}{d-1} - 2(d-1)!\binom{n-b}{d-2},
\end{align*}
and it follows that the number of ordinary hyperplanes of $(H\oplus x)\setminus A$ disjoint from $A$ is at least $\binom{n-b}{d-1}-2\binom{n-b}{2}$.

Next, we obtain an upper bound on the number of these hyperplanes that pass through a point $q\in B$.
Let the ordinary hyperplane $\Pi$ pass through $p_1,p_2,\dots,p_d\in (H\oplus x)\setminus A$,
with $p_1$ being the double point.
Since $q\in\Pi$ and any $d$ points determine a hyperplane, $\Pi$ is still spanned by $q,p_1,\dots,p_{d-1}$, after a relabelling of $p_2,\dots,p_d$.
Let $S$ be a minimal subset of $\{p_2,\dots,p_{d-1}\}$ such that the tangent line $\ell$ of $\delta$ at $p_1$ lies in the flat spanned by $S\cup\{q,p_1\}$.

If $S$ is empty, then $\ell$ is a tangent from $q$ to $\delta$, of which there are at most $d(d+1)$ (this follows again from projection and the Pl\"ucker formulas \citelist{\cite{W78}*{Chapter~IV, p.~117}\cite{NZ}*{Corollary~2.5}}).
Therefore, the number of ordinary hyperplanes through $p_1,p_2,\dots,p_d\in (H\oplus x)\setminus A$ with the tangent of $\delta$ at $p_1$ passing through $q$ is at most $d(d+1)\binom{n-b}{d-2}$.

If on the other hand $S$ is non-empty, then there is some $p_i$, say $p_{d-1}$, such that $q,p_1,\dots,p_{d-2}$ together with $\ell$ generate $\Pi$.
Therefore, $\Pi$ is determined by $p_1$, the tangent through $p_1$, and some $d-3$ more points $p_i$.
There are at most $(n-b)\binom{n-b-1}{d-3} = (d-2)\binom{n-b}{d-2}$ ordinary hyperplanes through $q$ in this case.

The number of ordinary hyperplanes of $(H\oplus x)\setminus A$ that contain a point from $A$ is at least
\[a\left(\binom{n-b}{d-1}-a\binom{n-b}{d-2}-(n-b)\binom{n-b-1}{d-3}\right) = a\binom{n-b}{d-1}-(a^2+a(d-2))\binom{n-b}{d-2},\]
since we can find such a hyperplane by choosing a point $p\in A$ and $d-1$ points $p_1,\dots,p_{d-1}\in(H\oplus x)\setminus A$, and then the remaining point $\ominus(p\oplus p_1\oplus\dots\oplus p_{d-1})$ might not be a new point in $(H\oplus x)\setminus A$ by either being in $A$ (possibly equal to $p$) or being equal to one of the $p_i$.
The number of these hyperplanes that also pass through some point of $B$ is at most $ab\binom{n-b}{d-2}$.

Therefore, the number of ordinary hyperplanes of $(H\oplus x)\setminus A$ that miss $B$ is at least
\begin{equation}\label{lower1}
(1+a)\binom{n-b}{d-1}-\left(2 + b(d(d+1)+d-2) + a^2+a(d-2) + ab\right)\binom{n-b}{d-2}.
\end{equation}

Next, assuming that $B\neq\emptyset$, we find a lower bound to the number of ordinary hyperplanes through exactly one point of $B$ and exactly $d-1$ points of $(H\oplus x)\setminus A$.
The number of hyperplanes through at least one point of $B$ and exactly $d-1$ points of $(H\oplus x)\setminus A$ is at least $bc'\binom{n-b}{d-1}-ab\binom{n-b}{d-2}$ by Lemmas~\ref{lem:7.7-} and~\ref{lem:7.7} for some sufficiently small absolute constant $c'>0$.
The number of hyperplanes through at least two points of $B$ and exactly $d-1$ points of $(H\oplus x)\setminus A$ is at most $\binom{b}{2}\binom{n-b}{d-2}$.
It follows that there are at least $bc'\binom{n-b}{d-1}-\bigl(ab+\binom{b}{2}\bigr)\binom{n-b}{d-2}$ ordinary hyperplanes passing though a point of~$B$.

Combining this with \eqref{lower1}, $P$ spans at least
\begin{equation*} (1+a+bc')\binom{n-b}{d-1}-\left(2 + b(d(d+1)+d-2) + a^2+a(d-2) + 2ab + \binom{b}{2}\right)\binom{n-b}{d-2} =: f(a,b)
\end{equation*}
ordinary hyperplanes.
Since 
\[f(a+1,b) - f(a,b) = \binom{n-b}{d-1}-(2a+2b+d-1)\binom{n-b}{d-2}\] 
is easily seen to be positive for all $a\ge 0$ as long as $n > (2a+2b+d-1)(d-1)+b+d-2$, we have without loss of generality that $a=0$ in the case that $b\ge 1$.
Then 
$f(0,b+1)-f(0,b)$ is easily seen to be at least 
\[ c'\binom{n-b-1}{d-1} - (d^2+d-2+b)\binom{n-b-1}{d-2},\]
which is positive for all $b\ge 1$ if $n \ge C(b+d^2)d$ for $C$ sufficiently large.
Also, $f(0,1) = (1+c')\binom{n-1}{d-1} - (d^2+2d)\binom{n-1}{d-2}) \ge (1+c)\binom{n-1}{d-1}$ if $n\ge Cd^3$.
This completes the proof in the case where $B$ is non-empty.

If $B$ is empty, then we can bound the number of ordinary hyperplanes from below by setting $b=0$ in \eqref{lower1}, and checking that the resulting expression
\[ (1+a)\binom{n}{d-1} - \left(d+a^2+a(d-2)\right)\binom{n}{d-2}\] is increasing in $a$ if $n > (2a+d-1)(d-1)+d-2$, and larger than $\frac32\binom{n-1}{d-1}$ if $n > Cd^3$.
\end{proof}

We are now ready to prove Theorems~\ref{thm:main2} and~\ref{thm:main3}.

\begin{proof}[Proof of Theorem~\ref{thm:main2}]
Let $P$ be the set of $n$ points.
By Lemma~\ref{lem:coset1}, we may assume that $P$ has at most $\binom{n-1}{d-1}$ ordinary hyperplanes.
Since $n \ge C d^3 2^d$, we may apply Theorem~\ref{thm:main1} to obtain that up to $O(d2^d)$ points, $P$ lies in a hyperplane or is a coset of a subgroup of an elliptic normal curve or the smooth points of a rational acnodal curve.

In the first case, by Lemma~\ref{lem:hyperplane}, since $n \ge C d^3 2^d$, the minimum number of ordinary hyperplanes is attained when all but one point is contained in a hyperplane and we get exactly $\binom{n-1}{d-1}$ ordinary hyperplanes.

In the second case, by Lemma~\ref{lem:coset2}, again since $n \ge C d^3 2^d$, the minimum number of ordinary hyperplanes is attained by a coset of an elliptic normal curve or the smooth points of a rational acnodal curve.
Lemmas~\ref{lem:coset1} and \ref{lem:2111} then complete the proof.
Note that the second term in the error term of Lemma~\ref{lem:2111} is dominated by the first term because of the lower bound on $n$, and that the error term here is negative by Lemma~\ref{lem:coset1}.
\end{proof}

Note that if we want to find the exact minimum number of ordinary hyperplanes spanned by a set of $n$ points in $\RR\PP^d$, $d \ge 4$, not contained in a hyperplane and where every $d$ points span a hyperplane, we can continue with the calculation of $[2, 1, \dotsc, 1; c]$ in the proof of Lemma~\ref{lem:2111}.
As seen in the proof of Lemma~\ref{lem:coset1}, this depends on $\gcd(d+1, n)$.
We also have to minimise over different values of $c \in H$, and if $n \equiv 0 \pmod{4}$, consider both cases $H \cong \ZZ_n$ and $H \cong \ZZ_{n/2} \times \ZZ_2$.

For example, it can be shown that if $d=4$, the minimum number is
\[ 
\begin{cases}
\binom{n-1}{3} - 4 & \text{if } n \equiv 0 \pmod{5},\\
\binom{n-1}{3} & \text{otherwise},
\end{cases}
\]
if $d=5$, the minimum number is
\[ 
\begin{cases}
\binom{n-1}{4} - \frac{1}{8}n^2 + \frac{1}{12} n - 1 & \text{if } n \equiv 0 \pmod{6},\\
\binom{n-1}{4} & \text{if } n \equiv 1, 5 \pmod{6},\\
\binom{n-1}{4} -\frac{1}{8}n^2 + \frac{3}{4}n - 1 & \text{if } n \equiv 2, 4 \pmod{6},\\
\binom{n-1}{4} - \frac{2}{3}n + 2 & \text{if } n \equiv 3 \pmod{6},
\end{cases}
\]
and if $d=6$, the minimum number is
\[
\begin{cases}
\binom{n-1}{5} - 6 & \text{if } n \equiv 0 \pmod{7},\\
\binom{n-1}{5} & \text{otherwise.}
\end{cases}
\]

\begin{proof}[Proof of Theorem~\ref{thm:main3}]
We first show that there exist sets of $n$ points, with every $d$ points spanning a hyperplane, spanning at least $\frac{1}{d+1} \binom{n-1}{d} + O\left(2^{-d/2}\binom{n}{\lfloor \frac{d-1}{2} \rfloor}\right)$ $(d+1)$-point hyperplanes.
Let $\delta^*$ be an elliptic normal curve or the smooth points of a rational acnodal curve.
By Propositions~\ref{prop:elliptic_group} and~\ref{prop:rational_group}, the number of $(d+1)$-point hyperplanes spanned by a coset $H \oplus x$ of $\delta^*$ is
\[ \frac{1}{(d+1)!}[\!\underbrace{1, \dotsc, 1}_\text{$d+1$ times}\!; c] \]
for some $c \in \delta^*$.
Note that
\[ [\!\underbrace{1, \dotsc, 1}_\text{$d+1$ times}\!; c] = d!\binom{n}{d} - d[2, \!\!\underbrace{1, \dotsc, 1}_\text{$d-1$ times}\!\!; c], \]
so if we take $H \oplus x$ to be a coset minimising the number of ordinary hyperplanes, then by Theorem~\ref{thm:main2}, there are
\begin{align}
&\mathrel{\phantom{=}} \frac{1}{d+1} \left( \binom{n}{d} - \binom{n-1}{d-1} \right) + O\left(2^{-\frac{d}{2}}\binom{n}{\lfloor \frac{d-1}{2} \rfloor}\right) \notag \\
&= \frac{1}{d+1} \binom{n-1}{d} + O\left(2^{-\frac{d}{2}}\binom{n}{\lfloor \frac{d-1}{2} \rfloor}\right)\label{eqn:d+1}
\end{align}
$(d+1)$-point hyperplanes.

Next let $P$ be an arbitrary set of $n$ points in $\RR\PP^d$, $d \ge 4$, where every $d$ points span a hyperplane.
Suppose $P$ spans the maximum number of $(d+1)$-point hyperplanes.
Without loss of generality, we can thus assume $P$ spans at least $\frac{1}{d+1}\binom{n-1}{d} + O\left(2^{-d/2}\binom{n}{\lfloor \frac{d-1}{2} \rfloor}\right)$ $(d+1)$-point hyperplanes.

Let $m_i$ denote the number of $i$-point hyperplanes spanned by $P$.
Counting the number of unordered $d$-tuples, we get
\[ \binom{n}{d} = \sum_{i \ge d} \binom{i}{d} m_i \ge m_d + (d+1)m_{d+1}, \]
hence we have
\[ m_d \le \binom{n}{d} - \binom{n-1}{d} - O\left(d2^{-\frac{d}{2}}\binom{n}{\lfloor \frac{d-1}{2} \rfloor}\right) = O\left(\binom{n-1}{d-1}\right), \]
and we can apply Theorem~\ref{thm:main1}.

In the case where all but $O(d2^d)$ points of $P$ are contained in a hyperplane, it is easy to see that $P$ spans $O(d2^d\binom{n}{d-1})$ $(d+1)$-point planes, contradicting the assumption.

So all but $O(d2^d)$ points of $P$ are contained in a coset $H \oplus x$ of a subgroup $H$ of $\delta^*$.
Consider the identity
\[ (d+1)m_{d+1} = \binom{n}{d} - m_d - \sum_{i \ge d+2} \binom{i}{d} m_i. \]
By Theorem~\ref{thm:main2} and Lemma~\ref{lem:coset2}, we know that $m_d \ge \binom{n-1}{d-1} - O\left(d2^{-d/2}\binom{n}{\lfloor \frac{d-1}{2} \rfloor}\right)$ and any deviation of $P$ from the coset $H \oplus x$ adds at least $c\binom{n-1}{d-1}$ ordinary hyperplanes for some sufficiently small absolute constant $c>0$.
Since we also have
\begin{align*}
\sum_{i \ge d+2} \binom{i}{d} m_i &= \binom{n}{d} - m_d - (d+1)m_{d+1}\\ 
&= \binom{n}{d} - \binom{n-1}{d-1} - \binom{n-1}{d} + O\left(d2^{-\frac{d}{2}}\binom{n}{\lfloor \frac{d-1}{2} \rfloor}\right)\\
&= O\left(d2^{-\frac{d}{2}}\binom{n}{\lfloor \frac{d-1}{2} \rfloor}\right),
\end{align*}
we can conclude that $m_{d+1}$ is maximised when $P$ is exactly a coset of a subgroup of $\delta^*$, in which case \eqref{eqn:d+1} completes the proof.
\end{proof}

Knowing the exact minimum number of ordinary hyperplanes spanned by a set of $n$ points in $\RR\PP^d$, $d \ge 4$, not contained in a hyperplane and where every $d$ points span a hyperplane then also gives the exact maximum number of $(d+1)$-point hyperplanes.

Continuing the above examples, for $d = 4$, the maximum number is
\[ 
\begin{cases}
\frac15\binom{n-1}{4} +\frac45 & \text{if } n \equiv 0 \pmod{5},\\
\frac15\binom{n-1}{4} & \text{otherwise},
\end{cases}
\]
for $d=5$, the maximum number is
\[ 
\begin{cases}
\frac{1}{6}\binom{n-1}{5} + \frac{1}{48}n^2 - \frac{1}{72}n + \frac16 & \text{if } n \equiv 0 \pmod{6},\\
 \frac{1}{6}\binom{n-1}{5} & \text{if } n \equiv 1, 5 \pmod{6},\\
 \frac{1}{6}\binom{n-1}{5} + \frac{1}{48}n^2 - \frac{1}{8}n +\frac16 & \text{if } n \equiv 2, 4 \pmod{6},\\
 \frac{1}{6}\binom{n-1}{5} +\frac{1}{9}n -\frac{1}{3} & \text{if } n \equiv 3 \pmod{6},
\end{cases}
\]
and for $d=6$, the maximum number is
\[ 
\begin{cases}
\frac17\binom{n-1}{6} +\frac67 & \text{if } n \equiv 0 \pmod{7},\\
\frac17\binom{n-1}{6} & \text{otherwise}.
\end{cases}
\]


\section*{Acknowledgments} 
We thank Peter Allen, Alex Fink, Misha Rudnev, and an anonymous referee for helpful remarks and for pointing out errors in a previous version.

\begin{bibdiv}
\begin{biblist}

\bib{B18}{article}{
   author={Ball, Simeon},
   title={On sets defining few ordinary planes},
   journal={Discrete Comput.\ Geom.},
   volume={60},
   date={2018},
   number={1},
   pages={220--253},
}

\bib{BJ18}{article}{
   author={Ball, Simeon},
   author={Jimenez, Enrique},
   title={On sets defining few ordinary solids},
   note={arXiv:1808.06388},
}

\bib{BM17}{article}{
   author={Ball, Simeon},
   author={Monserrat, Joaquim},
   title={A generalisation of {S}ylvester's problem to higher dimensions},
   journal={J. Geom.},
   volume={108},
   date={2017},
   number={2},
   pages={529--543},
}

\bib{Clifford}{article}{
   author={Clifford, William Kingdon},
   title={On the classification of loci},
   journal={Philosophical Transactions of the Royal Society of London},
   volume={169},
   date={1878},
   pages={663--681},
}

\bib{D03}{book}{
   author={Dolgachev, Igor},
   title={Lectures on Invariant Theory},
   publisher={Cambridge University Press},
   date={2003},
}

\bib{Fi08}{article}{
   author={Fisher, Tom},
   title={The invariants of a genus one curve},
   journal={Proc. Lond. Math. Soc.},
   volume={97},
   date={2008},
   number={3},
   pages={753--782},
}

\bib{Fu84}{book}{
   author={Fulton, William},
   title={Introduction to intersection theory in algebraic geometry},
   series={CBMS Regional Conference Series in Mathematics},
   volume={54},
   publisher={American Mathematical Society},
   date={1984},
}

\bib{Fu11}{article}{
   author={Furukawa, Katsuhisa},
   title={Defining ideal of the Segre locus in arbitrary characteristic},
   journal={J. Algebra},
   volume={336},
   date={2011},
   number={1},
   pages={84--98},
}

\bib{GT13}{article}{
   author={Green, Ben},
   author={Tao, Terence},
   title={On sets defining few ordinary lines},
   journal={Discrete Comput.\ Geom.},
   volume={50},
   date={2013},
   number={2},
   pages={409--468},
}

\bib{H65}{article}{
   author={Hansen, Sten},
   title={A generalization of a theorem of Sylvester on the lines determined by a finite point set},
   journal={Math. Scand.},
   volume={16},
   date={1965},
   pages={175--180},
}

\bib{H92}{book}{
   author={Harris, J.},
   title={Algebraic Geometry: A First Course},
   publisher={Springer},
   date={1992},
}

\bib{H77}{book}{
   author={Hartshorne, R.},
   title={Algebraic Geometry},
   publisher={Springer},
   date={1977},
}

\bib{IK99}{book}{
   author={Iarrobino, Anthony},
   author={Kanev, Vassil},
   title={Power Sums, Gorenstein Algebras, and Determinantal Loci},
   series={Lecture Notes in Mathematics},
   volume={1721},
   publisher={Springer},
   date={1999},
}

\bib{J18}{thesis}{
   author={Jimenez Izquierdo, Enrique},
   title={On sets of points with few ordinary hyperplanes},
   type={Master's Thesis},
   address={Universitat Polit\`ecnica de Catalunya},
   date={2018},
}

\bib{KKT08}{article}{
   author={Kaminski, J. Y.},
   author={Kanel-Belov, A.},
   author={Teicher, M.},
   title={Trisecant lemma for nonequidimensional varieties},
   journal={J. Math. Sci.},
   volume={149},
   number={2},
   year={2008},
   pages={1087--1097},
}

\bib{K99}{article}{
   author={Kanev, Vassil},
   title={Chordal varieties of {V}eronese varieties and catalecticant matrices},
   journal={J. Math. Sci.},
   volume={94},
   date={1999},
   number={1},
   pages={1114--1125},
}

\bib{Klein}{article}{
   author={Klein, Felix},
   title={\"Uber die elliptischen Normalcurven der $N$ten Ordnung und zugeh\"orige Modulfunctionen der $N$ten Stufe},
   journal={Abhandlungen der mathematisch-physischen Classe der K\"oniglich S\"achsischen Gesellschaft der Wissenschaften},
   volume={13},
   date={1885},
   number={4},
   pages={337--399},
}

\bib{Kollar}{book}{
   author={Koll\'ar, J\'anos},
   title={Lectures on Resolution of Singularities},
   series={Annals of Mathematics Studies},
   volume={166},
   publisher={Princeton University Press},
   date={2007},
}

\bib{LMMSSZ18}{article}{
   author={Lin, Aaron},
   author={Makhul, Mehdi},
   author={Nassajian Mojarrad, Hossein},
   author={Schicho, Josef},
   author={Swanepoel, Konrad},
   author={de Zeeuw, Frank},
   title={On sets defining few ordinary circles},
   journal={Discrete Comput. Geom.},
   volume={59},
   date={2018},
   number={1},
   pages={59--87},
}

\bib{LS18}{article}{
   author={Lin, Aaron},
   author={Swanepoel, Konrad},
   title={Ordinary planes, coplanar quadruples, and space quartics},
   journal={J. Lond. Math. Soc. (2)},
   volume={100},
   date={2019},
   pages={937--956},
}

\bib{M15}{thesis}{
   author={Monserrat, Joaquim},
   title={Generalisation of {S}ylvester's problem},
   type={Bachelor's Degree Thesis},
   address={Universitat Polit\`ecnica de Catalunya},
   date={2015},
}

\bib{M51}{article}{
   author={Motzkin, T.},
   title={The lines and planes connecting the points of a finite set},
   journal={Trans. Amer. Math. Soc.},
   volume={70},
   date={1951},
   number={3},
   pages={451--464},
}

\bib{Muntingh}{thesis}{
   author={Muntingh, Georg},
   title={Topics in polynomial interpolation theory},
   type={Ph.D. Dissertation},
   address={University of Oslo},
   date={2010},
}

\bib{NZ}{unpublished}{
      author={{Nassajian Mojarrad}, Hossein},
      author={de~Zeeuw, Frank},
       title={On the number of ordinary circles},
        note={arXiv:1412.8314},
}

\bib{PS10}{article}{
   author={Purdy, George B.},
   author={Smith, Justin W.},
   title={Lines, circles, planes and spheres},
   journal={Discrete Comput. Geom.},
   volume={44},
   date={2010},
   number={4},
   pages={860--882},
}

\bib{Rez2013}{article}{
   author={Reznick, Bruce},
   title={On the length of binary forms},
   conference={
      title={Quadratic and higher degree forms},
   },
   book={
      series={Dev. Math.},
      volume={31},
      publisher={Springer},
   },
   date={2013},
   pages={207--232},
}

\bib{SR85}{book}{
   author={Semple, J. G.},
   author={Roth, L.},
   title={Introduction to Algebraic Geometry},
   publisher={The Clarendon Press},
   date={1985},
   note={Reprint of the 1949 original},
}

\bib{S09}{book}{
      author={Silverman, J.~H.},
       title={The Arithmetic of Elliptic Curves},
     edition={Second Edition},
   publisher={Springer},
        date={2009},
}

\bib{S94}{book}{
      author={Silverman, J.~H.},
       title={Advanced Topics in the Arithmetic of Elliptic Curves},
   publisher={Springer},
        date={1994},
}

\bib{S51}{article}{
	author={Sylvester, J. J.},
    title={On a remarkable discovery in the theory of canonical forms and of hyperdeterminants},
    journal={Philosophical Magazine},
    volume={2},
    date={1851},
    pages={391--410},
    note={Paper 41 in The Collected Mathematical Papers of James Joseph Sylvester, Cambridge University Press, 1904},
}

\bib{W78}{book}{
   author={Walker, R. J.},
   title={Algebraic Curves},
   publisher={Springer},
   date={1978},
}

\end{biblist}
\end{bibdiv}


\newpage

\begin{dajauthors}
\begin{authorinfo}[AL]
  Aaron Lin\\
  Department of Mathematics\\
  London School of Economics and Political Science\\
  United Kingdom\\
  aaronlinhk\imageat{}gmail\imagedot{}com
\end{authorinfo}
\begin{authorinfo}[KS]
  Konrad Swanepoel\\
  Department of Mathematics\\
  London School of Economics and Political Science\\
  United Kingdom\\
  k\imagedot{}swanepoel\imageat{}lse\imagedot{}ac\imagedot{}uk \\
  \url{http://personal.lse.ac.uk/swanepoe/}
\end{authorinfo}
\end{dajauthors}

\end{document}